\documentclass[a4paper,11pt]{amsart}
\addtolength{\textheight}{9mm}
\usepackage[margin=2.5cm]{geometry}

\usepackage[T1]{fontenc}
\usepackage{lmodern, amsfonts,amsmath,amstext,amsbsy,amssymb,
amsopn,amsthm,upref,eucal,mathptmx,mathtools,url,thmtools,cleveref}

\usepackage{mathrsfs}

\RequirePackage{xcolor} 
\definecolor{halfgray}
{gray}{0.55}
\definecolor{webgreen}
{rgb}{0,0.4,0}
\definecolor{webbrown}
{rgb}{.8,0.1,0.1}
\definecolor{red}
{rgb}{1,0,0}
\usepackage{microtype}
\usepackage{tikz}

\newcommand \R {{ \mathbb R}}
\def\C{{\mathbb C}}

\newcommand \N {{ \mathbb N}}

\newcommand*{\diff}{\mathop{}\!\mathrm{d}}
\newcommand{\one}{{\rm 1\mskip-4mu l}}

\newcommand{\horo}{\mathsf{h}}
\newcommand{\geo}{\mathsf{g}}
\newcommand{\Ual}{U_{\alpha}}

\newcommand{\tc}{\mathsf{h}^{\alpha}}

\newcommand{\cP}{\mathcal{P}}
\newcommand{\cA}{\mathcal{A}}
\newcommand{\cQ}{\mathcal{Q}}
\newcommand{\cD}{\mathcal{D}}
\newcommand{\cC}{\mathcal{C}}

\newcommand{\cW}{\mathcal{W}}
\newcommand{\cF}{\mathcal{F}}

\newcommand{\volal}{\, \vol_{\alpha}}
\newcommand{\Cal}{C_{\alpha}}

\newcommand{\nal}{\|\alpha-1\|_{W^7}}
\newcommand{\xnal}{\|X\alpha\|_{W^6}}

\newcommand{\SL}{%
\operatorname{SL}
}

\DeclareMathOperator{\vol}{vol}

\DeclareMathOperator{\Spec}{Spec}
\DeclareMathOperator{\dist}{d}
\DeclareMathOperator{\Id}{Id}

\newtheorem{theorem}{Theorem}
\newtheorem {lemma}[theorem]{Lemma}
\newtheorem {proposition}[theorem]{Proposition}
\newtheorem{corollary}[theorem]{Corollary}
\newtheorem{remark}[theorem]{Remark}

	

\date{\today}
\author{Davide Ravotti}
\address{Université de Lille, CNRS, UMR 8524 - Laboratoire Paul Painlevé, F-59000 Lille, France}
\email{davide.ravotti@gmail.com\\}

 \title[Time-changes of horocycle flows]
 {Mixing asymptotics for time-changes of horocycle flows}

\begin{document}

\maketitle

\begin{abstract}
	Mixing-via-shearing is a powerful and versatile method for establishing mixing properties of smooth parabolic flows. In its quantitative form, it provides upper bounds on the decay of correlations for sufficiently smooth observables. Despite its wide applicability, determining the exact rates of mixing for a given smooth parabolic flow remains notoriously difficult. Apart from the classical horocycle flow, no examples are known where polynomial asymptotics, or sharp lower bounds, hold.
    In this paper, we address this question for smooth time-changes of horocycle flows on compact hyperbolic surfaces. Our approach relies on a refined version of the mixing-via-shearing method which leverages on a precise description of the ergodic integrals for horocycle flows, in particular of the regularity of the coefficients appearing in their asymptotic expansions. Using this method, we prove polynomial upper bounds on the decay of correlations for smooth observables that match the optimal rates originally obtained by Ratner for the standard horocycle flow. Furthermore, in the presence of a spectral gap below $1/4$, we establish exact polynomial asymptotics, mirroring the classical behavior of the horocycle flow.
\end{abstract}
    
\section{Introduction}

Understanding the long-term statistical behavior of smooth parabolic flows is a central problem in parabolic dynamics. 
A prominent example of parabolic flow, which encapsulates the main features of what is considered as typical \lq\lq uniformly parabolic behaviour\rq\rq, is the horocycle flow on (the unit tangent bundle of) hyperbolic surfaces. When the space is compact, it is uniquely ergodic, mixing of all orders, and exhibits rich connections with geometry, representation theory, spectral theory, and number theory. Beyond the classical homogeneous horocycle flow, a natural class of systems to study are its smooth time-changes: these are smooth perturbations which preserve the underlying orbits but modify the speed along them. 
Ratner's rigidity theorem \cite{Rat2} and Flaminio and Forni's solution of the cohomological equation \cite{FlaFo} imply that generic\footnote{i.e., in the complement of a subspace of countable codimension in a Sobolev space of sufficiently large order.} smooth time-changes are not measurably conjugate to the horocycle flow itself. As such, these flows provide a testing ground for understanding robustness and variability of parabolic behavior under nontrivial smooth perturbations. Their rigidity and fine ergodic properties have been the subject of intensive research, see, e.g., \cite{Rat2, Rat3, KLU, FlaFo2}.

A primary question in this context concerns mixing rates. While the exact rate of decay of correlations of H\"{o}lder observables was established by Ratner for the horocycle flow \cite{Rat1}, solving the same problem for general smooth parabolic flows remains notoriously difficult. The classical method of mixing-via-shearing provides upper bounds on correlation decay for sufficiently smooth observables and has been successfully applied in several settings: time-changes of horocycle and unipotent flows \cite{FU, Sim, Rav3}, time-changes of nilflows \cite{AFU, Rav5, AFRU, FK}, and smooth flows on surfaces \cite{Koc1, Koc2, SK, Ulc, Rav4, Fay1, Fay2, FFK, CW}.
Recently, Adam Kanigowski and the author have developed a general mixing-via-shearing argument that can be used to prove quantitative mixing of any order \cite{KR}.

On the other hand, except for the horocycle flow itself, no examples of smooth parabolic system are known where precise polynomial asymptotics hold, or even lower and upper bounds of the same order. In particular, for time-changes of horocycle flows on compact surfaces, Forni and Ulcigrai proved a polynomial upper bound for the correlations of smooth observables based on the equidistribution properties of sheared geodesic segments \cite{FU}. This result was later generalized by the author to time-changes of unipotent flows on finite volume quotients of semisimple Lie groups \cite{Rav3}. It is known, however, that, for the classical horocycle flow, arcs transverse to the weak-stable foliation for the geodesic flow (e.g., opposite horocycles or circle arcs) equidistribute at a faster rate than geodesic segments \cite{Rav2}. As a consequence, the precise quantitative behavior of time-changed horocycle flows has remained open.

In this paper, we address this problem for smooth time-changes of horocycle flows on compact hyperbolic surfaces. Our approach combines a refined version of the mixing-via-shearing method with a detailed description of ergodic integrals for horocycle flows, in particular of the regularity of the coefficients that appear in their asymptotic expansions. This combination allows us to establish polynomial bounds for the decay of correlations of smooth observables, matching the optimal rates by Ratner in the unperturbed setting \cite{Rat1}. Interestingly, in the presence of a spectral gap below $1/4$, we further obtain exact polynomial asymptotics, reproducing the classical behavior of the horocycle flow under time-changes. 

\medskip

In order to state our main result, we describe our setting and we fix some notation.
Let $M = \Gamma \backslash \SL_2(\R)$ be a compact quotient of $\SL_2(\R)$ by a discrete subgroup $\Gamma$. We identify elements $W \in \mathfrak{sl}_2(\R)$ of the Lie algebra $\mathfrak{sl}_2(\R)$ of $\SL_2(\R)$ with smooth vector fields on $M$; in particular, $W \in \mathfrak{sl}_2(\R) \setminus \{0\}$ induces the smooth flow on $M$ given by 
\[
x \mapsto x \exp(tW), \qquad \text{ for } t \in \R.
\]
We fix the basis of $\mathfrak{sl}_2(\R)$ given by 
\[
    X = 
    \begin{pmatrix}
    1/2 & 0 \\
    0 & -1/2
    \end{pmatrix},\qquad
    U = 
    \begin{pmatrix}
    0 & 1 \\
    0 & 0
    \end{pmatrix}, 
    \qquad
    V = 
    \begin{pmatrix}
    0 & 0 \\
    1 & 0
    \end{pmatrix}.
\]
The flows $\{\geo_t\}_{t\in \R}$ and $\{\horo_t\}_{t\in \R}$ corresponding to $X$ and $U$ above are called, respectively, the \emph{geodesic flow} and the (\emph{stable}) \emph{horocycle flow}.
We let $\widehat{X}, \widehat{U}, \widehat{V}$ denote the smooth 1-forms dual to the vector fields $X,U,V$. The volume form
\[
    \vol = \widehat{X} \wedge \widehat{U} \wedge \widehat{V}
\]
defines a probability measure, still denoted by $\vol$, on $M$ which is the unique probability measure invariant by  $\{\horo_t\}_{t\in \R}$. Locally, it is a Haar measure on $\SL_2(\R)$.

Let
\[
    Y = \frac{1}{2}(U+V), \qquad \Theta = \frac{1}{2}(U-V),
\]
and let $\square = - X^2 - Y^2 + \Theta^2 = -X^2 +X - UV$ be the Casimir operator. 
We let $\mu_0$ denote the smallest positive eigenvalue of $\square$ on $M$. We define
\[
    \nu_0 = \begin{cases} 
    \sqrt{1-4\mu} & \text{ if } \mu_0\leq 1/4, \\
    0 & \text{ otherwise},
    \end{cases}
    \qquad \text{and} \qquad
    R(t) := \begin{cases} 
    t^{\frac{1+\nu_0}{2}} & \text{ if } \mu_0 \neq 1/4, \\
    t^{\frac{1}{2}} \log t  & \text{ if } \mu_0 = 1/4. \\
    \end{cases}
\]
Finally, let $\Delta = \square - 2 \Theta^2 = -(X^2 + Y^2 + \Theta^2)$ and, for any $r\geq 0$, define the Sobolev space $W^r(M)$ to be the completion of $\mathscr{C}^{\infty}(M)$ with respect to the norm
\[
    \|f\|_{W^r} := \|(\Id + \Delta)^{r/2} f\|_2.
\]

In this paper, we are concerned with smooth time-changes of  $\{\horo_t\}_{t\in \R}$, which are defined as follows. Let $\alpha$ be a sufficiently smooth positive function on $M$, normalized so that $\vol(\alpha)=1$. 
The time-change induced by $\alpha$ is the flow $\{\tc_t\}_{t\in \R}$ whose generating vector field is $\Ual = \frac{1}{\alpha} U$. It preserves the smooth measure $\volal = \alpha \, \vol$.

The following is the quantitative mixing result proved by Forni and Ulcigrai \cite{FU}.

\begin{theorem}[{\cite[Theorem 19]{FU}}]\label{thm:FU_mixing}
Let $\alpha \in W^6(M)$ be a positive function, and let $\{\tc_t\}_{t\in\R}$ be the associated time-change of the horocycle flow on $M$. There exists a constant $\Cal>0$ such that, for any zero-average function $f\in W^6(M)$, for any $\ell \in W^1(M)$, and for any $t\geq 2$, we have
\[
    \left\lvert \int_M f\circ \tc_t \, \ell \volal \right\rvert \leq \Cal \|f\|_{W^6} (\|\ell\|_2^2 + \|X\ell\|_2^2)^{1/2}\frac{R(t)}{t}.
\]
\end{theorem}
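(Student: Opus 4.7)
I would follow the mixing-via-shearing paradigm adapted to time-changes, in the spirit of \cite{FU}. The key geometric fact is that $\tc_t$ shears arcs transverse to $\Ual$ into long arcs nearly tangent to $\Ual$: after time $t$, a transversal arc of length $\tau$ becomes, up to small error, an arc of $\tc$-orbit of length proportional to $\tau\cdot t$. One then reduces the correlation to the quantitative equidistribution of horocycle orbits (Flaminio--Forni), which yields an error of order $R(\tau t)/(\tau t)$. Optimizing in $\tau$ produces the bound $R(t)/t$.

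\textbf{Steps.} First, introduce a foliated coordinate system in which $M$ is locally parametrized as $(y,s)\mapsto \tc_s(y)$, with $y$ in a two-dimensional transversal generated by $X$ and $\Theta$. By a smooth partition of unity and Fubini, reduce the correlation $\int_M f\circ\tc_t\cdot \ell\volal$ to a sum of integrals along transversal arcs of $\tc_t$-images of such short orbit pieces, with suitable weights derived from $\ell$ and $\alpha$. Next, fix a basepoint $x$ and a transversal displacement $\tau$ in the $X$-direction. Using the commutation relations $[X,U]=U$, $[\Theta,U]=-X$ together with the flow ODE $\dot{\tc}_t = \Ual(\tc_t)$, Taylor-expand $\tc_t(x\cdot\exp(\tau X))$ to produce an expression of the form
\[
\tc_t(x\exp(\tau X)) = \tc_t(x)\cdot \exp(\tau X)\cdot \horo_{\tau\cdot S_\alpha(t,x) + O(\tau^2 t^2)},
\]
where the \emph{shearing rate} $S_\alpha(t,x)$ is linear in $t$ to leading order, with lower-order corrections expressible as ergodic integrals of $\alpha$ and its first derivatives along $\tc$-orbits. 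The change of variables $r=\tau\cdot S_\alpha(t,x)$ then converts the $\tau$-integral of $f\circ\tc_t$ on a transversal of length $\epsilon$ into an ergodic integral of $f$ along a horocycle arc of length $\sim \epsilon t$, which, by the Flaminio--Forni asymptotic expansion for $\horo$-ergodic integrals, equals $\epsilon t\cdot \vol(f) + O(R(\epsilon t)\|f\|_{W^6})=O(R(\epsilon t)\|f\|_{W^6})$.

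\textbf{Optimization and main difficulty.} Dividing by the normalization $\epsilon t$ and balancing the shearing error (which grows polynomially in $\epsilon$ with a coefficient depending on $\|\alpha\|_{W^6}$ and $\|X\ell\|_2$) against $R(\epsilon t)/(\epsilon t)$, then optimizing the transversal scale $\epsilon$ in terms of $t$, yields the claimed bound. The main obstacle is to carry out the shearing expansion with sufficient precision: one needs a quantitative description of the distortion of transversals by $\tc_t$, with error terms controlled by a bounded number of derivatives of $\alpha$, and one needs quantitative ergodic theorems for $\horo$ to handle the ergodic integrals of $X\alpha/\alpha$ and related quantities appearing in $S_\alpha(t,x)$. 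The regularity assumptions $\alpha,f\in W^6$ and $\ell\in W^1$ reflect precisely the number of derivatives consumed by this Taylor expansion and by the subsequent appeal to Sobolev equidistribution estimates for the horocycle flow.
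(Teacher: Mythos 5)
The high-level plan you sketch---shear transversals with $\tc_t$ and reduce to quantitative equidistribution of the resulting long arcs---is indeed the mixing-via-shearing method underlying Forni--Ulcigrai's proof of Theorem \ref{thm:FU_mixing} in \cite{FU}; the paper itself cites that result rather than reproving it, but the closely related \Cref{prop:improved_FU} here follows the same strategy. There are, however, several points where your plan diverges from a workable argument.

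First, the proposed Taylor expansion with remainder $O(\tau^2t^2)$ is problematic. The second-order error in the $\Ual$-displacement is actually $O(\tau^2 t)$ (since $\|XA_t\|_\infty\ll t$ by \Cref{lem:estimate_At}), but even so, your plan is to compare the sheared arc to a genuine horocycle arc of length $\sim\epsilon t$ and then apply Flaminio--Forni. For the approximation to be acceptable one must take $\epsilon$ so small that the comparison term dominates, and then $R(\epsilon t)/(\epsilon t)$ no longer yields the rate $R(t)/t$. The actual argument never approximates by horocycle arcs: it directly estimates the integral of $w$ along the sheared geodesic segment $\tc_{\pm t}\gamma_s^x$ (where $\gamma_s^x\colon r\mapsto\geo_r(x)$), which is exactly the nontrivial \cite[Lemma 17]{FU} asserting $\lvert\int_{\tc_{\pm t}\gamma_s^x} w\,\widehat\Ual\rvert\ll\|w\|_{W^6}R(st)$. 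Your plan glosses over precisely this lemma, treating the arc as if it were a horocycle arc.

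Second, your decomposition is more complicated than needed and omits the step that makes $\ell$ enter the bound. One averages over a one-dimensional geodesic arc (not a two-dimensional transversal), writing
\[
\int_M f\circ\tc_t\,\ell\volal = \frac1\sigma\int_0^\sigma\int_M f\circ\tc_t\circ\geo_s\cdot A_t\circ\geo_s\cdot\frac{\alpha\ell}{A_t}\circ\geo_s\,\vol\,\diff s,
\]
and then integrates by parts in $s$. It is this integration by parts, combined with $\| X(\alpha\ell/A_t)\|\ll\|\ell\|_X/t$, that produces the factor involving $X\ell$ and converts the arc estimate into a bound on the correlation. Your plan's appeal to a partition of unity and Fubini with ``weights derived from $\ell$'' is too vague to recover this structure. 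Finally, no optimization over the transversal scale is needed here: taking the scale of order $1$ already yields the claimed $R(t)/t$ rate.
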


In particular, \Cref{thm:FU_mixing} implies that the rate of decay of correlations for the time-change $\{\tc_t\}_{t\in\R}$ is bounded above by $t^{-\frac{1}{2}}$, if $\mu_0>1/4$, and by $t^{-\frac{1-\nu_0}{2}}$, if $\mu_0<1/4$. However, as we mentioned above, these rates are not optimal, since they do not match the rates of decay of correlations for the standard horocycle flow \cite{Rat1}.
Our main result proves that the rate of decay of correlations of sufficiently smooth time-changes is the same rate as for the horocycle flow itself (apart from an extra factor $\log t$ in the case where $\mu_0$ is exactly $1/4$). More precisely, we prove the following.

\begin{theorem}\label{thm:main}
There exists $\delta_0 >0$ such that the following holds. Let $\alpha \in W^7(M)$ be a positive function, and let $\{\tc_t\}_{t\in\R}$ be the associated time-change of the horocycle flow on $M$. There exists a constant $\Cal>0$ and a bilinear form $\cC_{\alpha}$ on $W^7(M)$ such that, for any zero-average functions $f, \ell \in W^7(M)$ and for any $t\geq 2$, we have the following estimates: if $\mu_0 > 1/4$, then 
\[
    \left\lvert \int_M f\circ \tc_t \, \ell \volal \right\rvert \leq \Cal \|f\|_{W^7} \|\ell\|_{W^7} t^{-1};
\]
if $\mu_0 = 1/4$, then
\[
    \left\lvert \int_M f\circ \tc_t \, \ell \volal \right\rvert \leq \Cal \|f\|_{W^7} \|\ell\|_{W^7} t^{-1}(\log t)^2;
\]
and, if $\mu_0 < 1/4$, then 
\[
    \left\lvert \int_M f\circ \tc_t \, \ell \volal - t^{-1 + \nu_0} \cC_{\alpha}(f,\ell)\right\rvert \leq \Cal \|f\|_{W^7} \|\ell\|_{W^7} t^{-1 + \nu_0 - \delta_0}.
\]
\end{theorem}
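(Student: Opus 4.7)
The plan is to refine the mixing-via-shearing method by replacing the single-scale horocycle equidistribution bound used by Forni and Ulcigrai with the full Flaminio-Forni asymptotic expansion for ergodic integrals of $\{\horo_t\}_{t\in\R}$. First-order equidistribution produces the $R(t)/t$ rate in \Cref{thm:FU_mixing}; extracting the next term of the expansion, and integrating by parts once more against the smooth transverse density determined by $\alpha$ and $\ell$, should gain an additional factor of $R(t)/t$ and yield the claimed rates.

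The first step is a geodesic shearing reduction. Since $\volal = \alpha\vol$ and $\vol$ is invariant under the geodesic flow, I average the correlation over a short $\geo$-segment of length $\tau$,
\[
    \left\langle f \circ \tc_t, \ell \right\rangle_{\volal} = \frac{1}{\tau} \int_0^\tau \left\langle f \circ \tc_t \circ \geo_s, (\ell \alpha) \circ \geo_s \right\rangle_\vol \diff s,
\]
and write $\tc_t(y) = y\exp(\sigma(y,t)U)$ with $\sigma(y,t)$ defined implicitly by $\int_0^{\sigma(y,t)} \alpha(\horo_u y) \diff u = t$. The commutation $\exp(sX)\exp(\sigma U) = \exp(e^s \sigma U)\exp(sX)$ gives $\tc_t(\geo_s x) = \geo_s(\horo_{e^s \sigma(\geo_s x, t)}(x))$, and a change of variables from $s$ to $T = e^s \sigma(\geo_s x, t) \approx e^s t$, with Jacobian controlled by $\alpha$ and $X\alpha$, transforms the correlation, up to boundary terms in $s$, into an integral of horocycle translates $f\circ \horo_T$ over a $T$-window of length $\asymp t\tau$, weighted by a smooth kernel built from $\ell, \alpha, X\alpha, X\ell$ and integrated against $\vol$.

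Next, I apply the Flaminio-Forni asymptotic expansion to the inner horocycle integral. Decomposing $f$ according to the spectral decomposition of $\square$, each irreducible component with Casimir eigenvalue $\mu$ and exponent $\nu = \nu_\mu$ contributes
\[
    \int_0^T f_\mu \circ \horo_u \diff u = c_+^\mu(T,x)\, D_+^\mu(f_\mu) + c_-^\mu(T,x)\, D_-^\mu(f_\mu) + \cR_\mu(T,x,f_\mu),
\]
where $c_\pm^\mu(T,x)$ has size $T^{(1\pm \nu)/2}$ (oscillating when $\nu$ is imaginary, logarithmic when $\nu = 0$), $\cR_\mu$ is of strictly smaller order, and all terms are smooth in $x$. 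Substituting into the previous expression and using the zero-average assumption on $f$ to cancel the linear-in-$T$ term recovers the Forni-Ulcigrai bound term by term. The decisive extra gain comes from a second integration by parts in the transversal variable: the coefficients $c_\pm^\mu(T,x)$ are essentially matrix coefficients evaluated at $\geo_{\log T}$, hence smooth in $x$ with quantitative control of their $X$-derivatives, and each integration by parts against the smooth transversal density converts a factor of $T^{(1\pm\nu)/2}$ into $T^{(-1\pm\nu)/2}$, gaining an extra $T^{-1}$. Combined with the $T$-window length and summed over the spectral decomposition, this produces the bounds $t^{-1}$, $t^{-1}(\log t)^2$, and $t^{-1+\nu_0}$ in the three regimes; when $\mu_0 < 1/4$, the bottom spectral component yields a single surviving principal term of order $t^{-1+\nu_0}$ whose coefficient defines $\cC_\alpha(f,\ell)$.

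The main technical obstacle will be this last step: controlling the $x$-regularity of the coefficients $c_\pm^\mu(T,x)$ and the remainder $\cR_\mu$ uniformly in the spectral parameter $\mu$, and checking that the two integrations by parts (against $\alpha, \ell \in W^7$) absorb all derivative losses both on the distributional side, via Sobolev bounds on $D_\pm^\mu$, and on the transversal-kernel side. This is what accounts for the $W^7$ hypothesis, one derivative more than Forni and Ulcigrai required. A secondary issue is balancing the choice of $\tau$ so that the boundary contributions in $s\in[0,\tau]$ remain dominated by the target rate, and, in the $\mu_0 < 1/4$ case, showing that all spectral components other than the bottom one contribute only an error of order $t^{-1+\nu_0 - \delta_0}$, which is what dictates the smallness of $\delta_0$.
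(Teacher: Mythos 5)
Your high-level reading of where the improvement must come from is right: Forni--Ulcigrai's bound $R(t)/t$ comes from a single-scale equidistribution estimate, and to reach $R(t)^2/t^2$ one needs the full asymptotic expansion of horocycle ergodic integrals together with quantitative regularity of its coefficients. You also correctly identify that in the case $\mu_0<1/4$ the bottom spectral parameter isolates the leading term defining $\cC_\alpha$. However, your proposed mechanism does not engage the actual technical obstacle, and the claim that a ``second integration by parts in the transversal variable converts $T^{(1\pm\nu)/2}$ into $T^{(-1\pm\nu)/2}$, gaining an extra $T^{-1}$'' is not something integration by parts alone can deliver: it neither follows from the change of variables you sketch nor from the smoothness of the coefficients $c^\mu_\pm(T,x)$ in $x$.

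The missing piece is the \emph{distortion}. In the time-change setting the push-forward of the geodesic vector field is $D\tc_t(X)=X+A_t\Ual$ with $A_t=t-A_t^0$, and $A_t^0(x)=\int_0^t\frac{X\alpha}{\alpha}\circ\tc_r\,\diff r$ has size $R(t)$, not $O(1)$. This $R(t)$-sized deviation from a linear shear is exactly what obstructs a second round of the naive argument: after the first shearing step one is left with a correlation weighted by $A_t^0$, and $A_t^0$ is not controlled in the Sobolev norms that the single-scale estimate asks for. Your change of variables $s\mapsto T=e^s\sigma(\geo_s x,t)$ buries the distortion in the ``Jacobian controlled by $\alpha$ and $X\alpha$'', but the Jacobian involves $X\sigma$, whose asymptotics are precisely the content of the distortion estimates, and the problem resurfaces unchanged. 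The paper instead first proves an exact algebraic identity (the key lemma): integrating by parts in $t$, using $X(w\circ\tc_r)=Xw\circ\tc_r+A_r\,\Ual w\circ\tc_r$, expresses $t\int f\circ\tc_t\,\ell\volal$ as $\int A_t^0\,f\circ\tc_t\,\ell\volal$ plus genuine ergodic-integral terms. The distortion term is then handled by a separate theorem: one expands $A_t^0$ via the coefficients $\cP_\lambda,\cQ_\lambda$ of the ergodic-integral expansion, uses that these are $X$-differentiable and Hölder in the $U$-direction, and runs the shearing argument again through a version of the Forni--Ulcigrai estimate with a free scale parameter, plus Ratner's exponential mixing of the geodesic flow for Hölder observables. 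This two-stage structure --- isolate the distortion by an identity, then bound the distortion-weighted correlation with a second shearing argument against the regular coefficients --- is what actually realizes the gain of $R(t)/t$, and it is not recoverable from the route you describe without, in effect, reinventing this structure.

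A secondary gap: in the case $\mu_0<1/4$, the surviving main term does not simply fall out of ``the bottom spectral component''. One must truncate the ergodic integral at $t_0=t-\theta$, control the contribution of the short remaining window $[t_0,\tau(x,t)]$, and then recognize the main window as a known horocycle correlation for the functions $\alpha f$ and $\alpha\ell$, whose asymptotics supply the bilinear form. Your outline does not say how to separate the principal term from the $t^{-1+\nu_0-\delta_0}$ error, nor why the error in replacing $\tau(x,t)$ by a deterministic endpoint is admissible; both require quantitative control of $\tau(x,t)-t$ in Hölder and $X$-derivative norms.
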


We remark that a precise asymptotic as above holds only when $\mu_0 < 1/4$ also for the classical horocycle flow. Indeed, when $\mu_0 > 1/4$, one can rescale the correlations of smooth observables by $t$ and show that they converge to a quasi-periodic motion on an infinite dimensional torus. A similar phenomenon happens if one considers the ergodic averages of smooth functions, see \cite{FlaFo, BF, Rav1}, and is related to the fact that there are countably many resonances for the geodesic flow on the line $\Re z = -1/2$.
In any case, we can show that, even in the case $\mu_0 > 1/4$, one cannot expect to improve the upper bound in \Cref{thm:main} and the correlations decay as $1/t$: the following result proves the continuity of the correlations in terms of the time-change function.
\begin{proposition}\label{thm:continuity}
Assume that $\mu_0>1/4$. For any $a > 1$, there exists a constant $C_a >1$ such that the following holds. Let $\alpha \in W^7(M)$ be a positive function so that $\|\alpha\|_{W^7} \leq C_a$ and $\|\alpha^{-1}\|_{W^7} \leq C_a$; denote by $\{\tc_t\}_{t\in \R}$ the associated time-change. Let $f,\ell \in W^7(M)$ be such that $\alpha f$ and $\alpha \ell$ are supported on the principal series. For any $t\geq 2$, we have
\[
    \Bigg\lvert    \int_M f\circ \tc_t \, \ell \volal -  \int_M  (\alpha f) \circ \horo_r \cdot \alpha \ell \, \vol \Bigg\rvert \leq \frac{C_a}{t} \, \nal^{\frac{1}{2}} \, \|f\|_{W^7} \, \|\ell\|_{W^7}.
\]
\end{proposition}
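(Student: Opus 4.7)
The plan is to establish two upper bounds on the difference
\[
D(t) := \int_M f \circ \tc_t \, \ell \volal - \int_M (\alpha f) \circ \horo_t \cdot \alpha \ell \, \vol
\]
and to combine them via the elementary inequality $\min(A, A x) \leq A \sqrt{x}$ (valid for $A, x \geq 0$), which produces the exponent $1/2$ on $\nal$.

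First I would establish the uniform bound $|D(t)| \leq K_a \, t^{-1} \|f\|_{W^7} \|\ell\|_{W^7}$, with $K_a$ depending only on $C_a$. This follows from the triangle inequality together with \Cref{thm:main}: the first correlation defining $D(t)$ is bounded directly by \Cref{thm:main} applied to $\tc_t$ and $\volal$, and the second by applying \Cref{thm:main} in the special case $\alpha \equiv 1$ (where $\tc_t = \horo_t$ and $\volal = \vol$) to the observables $\alpha f$ and $\alpha \ell$; the estimate $\|\alpha f\|_{W^7} \leq K' C_a \|f\|_{W^7}$, and similarly for $\alpha \ell$, follows from the $W^7$-algebra property combined with the assumption $\|\alpha\|_{W^7} \leq C_a$.

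Next I would establish the linear-in-$\nal$ bound
\[
|D(t)| \leq K'_a \nal \, t^{-1} \|f\|_{W^7} \|\ell\|_{W^7}.
\]
For this, I would exploit the cocycle representation $\tc_t(x) = \horo_{\tau(x,t)}(x)$, where $\tau$ is implicitly defined by $\int_0^{\tau(x,t)} \alpha(\horo_u x) \, du = t$. Setting $\beta := \alpha - 1$, the cocycle satisfies $\tau(x,t) - t = -\int_0^{\tau(x,t)} \beta(\horo_u x)\, du$. The Newton--Leibniz formula $f(\horo_\tau x) - f(\horo_t x) = \int_t^\tau (Uf)(\horo_s x)\, ds$, combined with $\alpha = 1 + \beta$, decomposes the integrand of $D(t)$ (which vanishes at $\beta = 0$) into (a) ``multiplicative'' horocycle correlations $\int_M (\beta g) \circ \horo_t \cdot \alpha \ell \, \vol$ for smooth $g$ built from $f$ and $\alpha$, and (b) ``shearing'' contributions $\int_M B_t(x) \cdot (Uf)(\horo_t x) \cdot H(x) \, dx$ with $B_t(x) := \int_0^t \beta(\horo_u x)\, du$. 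The correlations in (a) are controlled by $O(t^{-1} \nal \|f\|_{W^7}\|\ell\|_{W^7})$ via \Cref{thm:main} at $\alpha = 1$, using that $\alpha \ell$ has zero $\vol$-mean (from the principal series hypothesis) and the algebra property $\|\beta g\|_{W^7} \leq C \nal \|g\|_{W^7}$. The shearing terms in (b) are processed by the key identity $U B_t = \beta \circ \horo_t - \beta$: an integration by parts against $U$ converts each shearing term into further correlations of type (a), modulo a residual with $U$ applied to the test function, which admits the same treatment.

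The hardest part is step (b). The pointwise bound $|B_t(x)| \lesssim t^{1/2}$ (the correct order in the spectral gap regime $\mu_0 > 1/4$, up to log factors) gives no decay in $t$, so a direct Cauchy--Schwarz estimate on the shearing term fails; the $t^{-1}$ decay is recovered only through the integration-by-parts identity $U B_t = \beta \circ \horo_t - \beta$, which absorbs the growth of $B_t$ into a horocycle correlation controlled by \Cref{thm:main}. The principal series assumption on $\alpha f$ and $\alpha \ell$ is essential to ensure that the test functions in the resulting correlations have zero $\vol$-mean, so that \Cref{thm:main} applies. Once both bounds are in hand, the interpolation $\min(K_a, K'_a \nal)/t \leq (K_a K'_a)^{1/2} \nal^{1/2}/t$ delivers the claimed estimate.
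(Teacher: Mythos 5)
Your interpolation scheme is arithmetically correct (the inequality $\min(A,B\nal)\leq\sqrt{AB}\,\nal^{1/2}$ does hold), and step 1 is fine: the uniform $O(t^{-1})$ bound on both correlations follows from \Cref{thm:main}. But step 2 — the linear-in-$\nal$ bound $|D(t)|\ll \nal\,t^{-1}\|f\|_{W^7}\|\ell\|_{W^7}$ — is where the argument breaks down, and the paper's proof suggests that such a bound may not hold at all.

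The difficulty is in the ``shearing'' term. After writing $f\circ\tc_t - f\circ\horo_t = \int_t^{\tau(x,t)} Uf\circ\horo_s\,\diff s$ and approximating the inner integral by $(\tau(x,t)-t)\cdot Uf\circ\horo_t \approx -B_t\cdot Uf\circ\horo_t$, you are left with expressions of the form $\int_M B_t\cdot v\cdot f\circ\horo_t\,\vol$ where $\|B_t\|_\infty\ll\nal\,t^{1/2}$ gives no $t$-decay. Your integration by parts against $U$ uses $U(f\circ\horo_t)=(Uf)\circ\horo_t$ to convert this into
\[
-\int_M (\beta\circ\horo_t-\beta)\,v\,f\circ\horo_t\,\vol \;-\; \int_M B_t\,(Uv)\,f\circ\horo_t\,\vol.
\]
The first piece is indeed a pair of two-point correlations of size $O(\nal/t)$, but the second is a residual \emph{of exactly the same form} as what you started with, merely with $v$ replaced by $Uv$. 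Iterating does not terminate: after $k$ steps the residual is $\int_M B_t\,(U^k v)\,f\circ\horo_t\,\vol$, still carrying the un-decayed factor $B_t$. No amount of $W^7$ smoothness closes the loop. The ``same treatment'' you invoke does not reduce the residual, it reproduces it.

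The paper takes a genuinely different route, and the exponent $1/2$ on $\nal$ is \emph{intrinsic} to its method rather than the result of an interpolation between two regimes. The proof runs the main decomposition from \Cref{lem:key_lemma}, then controls the boundary term $\int_M\int_{t_0}^{\tau(x,t)}w\circ\horo_r\cdot v\,\diff r\,\vol$ by \Cref{prop:correlations_with_small_integral}. Inside that proposition the key is \Cref{prop:improved_FU} — a shearing argument in the \emph{geodesic} direction $X$, not the horocycle direction $U$ — which produces a bound of the form $\|v\|_\infty\,R(\sigma t)/(\sigma t)+\|v\|_X\,R(\sigma t)/t$ with a free parameter $\sigma$. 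In the spectral-gap regime $\nu_0=0$, $\lambda_0=1/2$, taking $\theta=\nal\,t^{1-\delta_0}$ and optimizing at $\sigma=\theta/(\nal\,t)$ makes both competing terms equal to $\nal^{1/2}$. The $\nal^{1/2}$ is therefore a balancing act between the supremum-norm and the $X$-norm contributions in the shearing estimate, not an interpolation between an $O(1)$ bound and an (unavailable) $O(\nal)$ bound. The fact that the paper only obtains $\nal^{1/2}$ with this optimized argument is fairly strong evidence that the linear bound you assert in step 2 is out of reach with two-point correlation estimates of this type; establishing it would require controlling a three-point correlation $\int_M B_t\cdot v\cdot f\circ\horo_t\,\vol$ without further loss, which is a hard problem in its own right.
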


\medskip

\Cref{thm:main} readily implies that, whenever $\mu_0 \geq 1/4$, the spectral measures associated to zero-average functions $f\in W^7(M)$ are absolutely continuous with square-integrable Radon-Nykodym derivative. A natural question is whether the latter is uniformly bounded, at least away from 0. 
A softer argument than the one used in \Cref{thm:main}, which is actually analogous to the one used in \cite{FU}, shows that the correlations of \emph{coboundaries}, as a function of $t$, are absolutely integrable, independently of $\mu_0$. The estimate in \Cref{thm:main_spectral} below is (conjecturally) not optimal\footnote{One would expect the correlations of coboundaries to decay with order $O(t^{-2+\nu_0})$, with an additional factor $\log t$ if $\mu_0=1/4$.}; nonetheless, following \cite[Section 6]{FU}, it is sufficient to  strengthen \cite[Theorem 21]{FU} on the local behaviour of the spectral measures, proving that the spectral measures have a bounded density away from 0.

\begin{theorem}\label{thm:main_spectral}
Let $\alpha \in W^6(M)$ be a positive function, and let $\{\tc_t\}_{t\in\R}$ be the associated time-change of the horocycle flow on $M$. There exists a constant $\Cal>0$ such that, for any zero-average functions $f, \ell \in W^6(M)$ and for any $t\geq 2$, we have 
\[
    \Bigg\lvert\int_M \Ual f \circ \tc_t \cdot \ell \volal\Bigg\rvert\ll \|f\|_{W^6} \, \|\ell\|_{W^6} \frac{R(t)}{t^2}.
\]
Consequently, let $f \in W^6(M)$ be a zero average function, and let $\sigma_f$ be the associated spectral measure. For any $\xi \neq 0$ and for any $\varepsilon \in (0, |\xi|/2)$, we have 
\[
    |\sigma_f(\xi - \varepsilon, \xi + \varepsilon)| \leq \Cal \|f\|_{W^6} \frac{\varepsilon}{\xi^2}.
\]
\end{theorem}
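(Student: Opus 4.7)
I would first prove the correlation estimate, then deduce the spectral bound by Fourier analysis.

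\emph{Step 1: correlation estimate.} I would rerun the mixing-via-shearing argument of \cite{FU} underlying \Cref{thm:FU_mixing}, exploiting that $\Ual f$ is a coboundary for $\{\tc_t\}_{t\in\R}$. Indeed, the identity $(\Ual f)\circ\tc_t = \frac{d}{dt}(f\circ \tc_t)$ makes the ergodic integrals telescope:
\[
    \int_0^T (\Ual f)\circ \tc_s(x)\,ds = f(\tc_T(x)) - f(x),
\]
so they are uniformly of size $O(\|f\|_{W^6})$. The FU argument decomposes $\int_M f\circ \tc_t\cdot\ell\,\volal$ along a partition of unity adapted to a transversal of $\{\tc_t\}_{t\in\R}$ and, after the change of variables corresponding to the shearing action of $\tc_t$, reduces it to a weighted sum of ergodic integrals of $f$ along $\tc$-orbits of length proportional to $t$, each of which is bounded by $O(R(t))$; combined with the Jacobian factor $1/t$ this produces the rate $R(t)/t$. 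Replacing $f$ by $\Ual f$ in the same decomposition, every inner ergodic integral collapses to a uniformly bounded boundary term, saving an extra factor $1/t$ and producing the claimed rate $R(t)/t^2$.

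\emph{Step 2: spectral estimate.} Let $\{U_t\}_{t\in\R}$ denote the Koopman group $U_t g := g\circ\tc_t$ on $L^2(\volal)$, and write $U_t = e^{itA}$ with self-adjoint generator $A = -i\Ual$. For $g\in L^2(\volal)$, the spectral measure $\sigma_g$ of $A$ associated with $g$ satisfies $\widehat{\sigma_g}(t) = \langle U_t g, g\rangle$, and the functional calculus gives $d\sigma_{\Ual f}(\xi) = \xi^2\, d\sigma_f(\xi)$. Applying the Step~1 estimate with $\ell = \overline{\Ual f}$ yields
\[
    \bigl|\widehat{\sigma_{\Ual f}}(t)\bigr| = \bigg|\int_M (\Ual f)\circ\tc_t\cdot\overline{\Ual f}\,\volal\bigg| \ll \|f\|_{W^6}^2\,\frac{R(|t|)}{t^2}\quad\text{for }|t|\geq 2.
\]
Since $R(t)/t^2$ is integrable on $[2,\infty)$ in every regime of $\mu_0$, the function $\widehat{\sigma_{\Ual f}}$ lies in $L^1(\R)$ with norm $\ll \|f\|_{W^6}^2$; by Fourier inversion, $\sigma_{\Ual f}$ has a continuous bounded density of $L^\infty$-norm $\ll \|f\|_{W^6}^2$, so $\sigma_{\Ual f}(\xi-\varepsilon,\xi+\varepsilon) \ll \|f\|_{W^6}^2\,\varepsilon$. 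Since $\varepsilon < |\xi|/2$ guarantees $\eta^2 \geq (|\xi|-\varepsilon)^2 > \xi^2/4$ on the interval of integration, one concludes
\[
    \sigma_f(\xi-\varepsilon,\xi+\varepsilon) = \int_{\xi-\varepsilon}^{\xi+\varepsilon}\eta^{-2}\,d\sigma_{\Ual f}(\eta) \leq \frac{4}{\xi^2}\,\sigma_{\Ual f}(\xi-\varepsilon,\xi+\varepsilon) \ll \|f\|_{W^6}^2\,\frac{\varepsilon}{\xi^2},
\]
matching the stated bound up to implicit constants.

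\emph{Main obstacle.} The substantive work is Step~1: one must carry the coboundary cancellation through the shearing approximations of the FU argument, checking that the extra factor $1/t$ really survives after the error terms from the affine model of $\tc_t$ on short transversals are collected. Step~2 is then a routine application of the spectral theorem and Fourier inversion, with positivity of $\sigma_f$ and the condition $\varepsilon < |\xi|/2$ together ensuring that the division by $\eta^2$ is harmless.
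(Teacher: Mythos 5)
Your Step~1 contains a genuine quantitative gap: the telescoping you describe does not produce the rate $R(t)/t^2$.

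Your observation that the inner integral collapses is correct as far as it goes. In the notation of the paper, for $w = \Ual f$ one has $\alpha w \,\widehat U = Uf\,\widehat U$, and along the sheared geodesic arc $\tc_{\pm t}\gamma_s^x$ one finds
\[
\int_{\tc_{\pm t}\gamma_s^x} Uf\,\widehat U \;=\; f(\tc_{\pm t}\geo_s x) - f(\tc_{\pm t}x) - \int_0^s Xf\circ\tc_{\pm t}\geo_r(x)\,\diff r \;=\; O\big(\|f\|_{\mathscr{C}^1}\big),
\]
uniformly in $t$ (the $Xf$ term appears because $D\tc_{\pm t}(X) = X + A_{\pm t}\Ual$ is not purely in the $U$--direction). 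But now trace through the proof of \Cref{prop:improved_FU}: replacing the bound $R(\sigma t)$ on the sheared integral by a constant produces
\[
\Big|\int_M (\Ual f)\circ\tc_{\pm t}\,v\volal\Big| \;\ll\; \|f\|_{\mathscr{C}^1}\Big(\tfrac{\|v\|_\infty}{\sigma t}+\tfrac{\|v\|_X}{t}\Big) \;\ll\; \frac{\|f\|_{\mathscr{C}^1}\,\|v\|_X}{t},
\]
which is $O(t^{-1})$, not $O(R(t)/t^2)$. Since $R(t)/t^2 \le t^{-1}\cdot t^{-(1-\nu_0)/2}$ (with a log factor when $\mu_0=1/4$) is strictly smaller than $t^{-1}$ for every $\mu_0$, you have proved a weaker bound than claimed. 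Eliminating $R(\sigma t)$ from the inner integral does not supply an additional factor of $1/t$; it supplies a factor of $1/R(\sigma t)$, which takes $R(t)/t$ to $1/t$ and no further. The mistake is a conflation of ``the ergodic integral is $O(1)$ instead of $O(R(t))$'' with ``an extra factor of $1/t$.'' This gap is fatal for Step~2 as well: with only $t^{-1}$, $\widehat{\sigma_{\Ual f}}$ is not integrable and Fourier inversion no longer yields a bounded density.

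The paper obtains $R(t)/t^2$ by a different device. Writing $A_t\,(\Ual f)\circ\tc_t = X(f\circ\tc_t) - Xf\circ\tc_t$ and integrating by parts in $X$ over the whole manifold (not along transversal arcs) gives
\[
\int_M (\Ual f)\circ\tc_t\,\ell\volal \;=\; \int_M f\circ\tc_t\cdot\alpha\ell\cdot\frac{XA_t}{A_t^2}\vol \;-\; \int_M f\circ\tc_t\cdot\frac{X(\alpha\ell)}{A_t}\vol \;-\; \int_M Xf\circ\tc_t\cdot\frac{\alpha\ell}{A_t}\vol.
\]
Each of the three integrals is then estimated by the \emph{full} \Cref{thm:FU_mixing} rate $R(t)/t$ applied to $f$ (or $\ell$, using $A_t\circ\tc_{-t} = -A_{-t}$) against a test function that contains a factor $1/A_t$; by \Cref{lem:estimate_At} this factor has $\|\cdot\|_X \ll 1/t$, and this is where the extra $1/t$ comes from. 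The coboundary structure is exploited, but via this $X$-integration by parts, not by telescoping along sheared arcs. If you want to salvage the telescoping route, you would have to notice that after the telescoping the boundary terms $f(\tc_t\geo_\sigma x) - f(\tc_t x)$ are themselves correlations of $f$ (after a change of variables by $\geo_\sigma$) and apply the $R(t)/t$ bound to them a second time; your write-up does not do this.

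Assuming the $R(t)/t^2$ correlation bound, your Step~2 is a valid alternative to the paper's: the paper computes directly via the Fourier-analytic identities of \cite[Equations (20)--(23)]{FU}, whereas you pass through the spectral measure of $\Ual f$ and Fourier inversion. Both routes are routine once the $L^1$ integrability of the correlations in $t$ is established, and both give the same conclusion (note the homogeneity of degree two in $f$, so the final bound should read $\|f\|_{W^6}^2\,\varepsilon/\xi^2$ as you have written).
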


\smallskip 

One crucial point in the classical mixing-via-shearing argument is to control the \emph{distortion} of the sheared segments, namely to quantify how much the growth of the push-forward of the transverse vector field (in this case, the geodesic flow) deviates from being linear. In this paper, the relevant quantity is $A^0_t(x)$, which can be expressed as the ergodic integral of the function $\frac{X\alpha}{\alpha}$. The corresponding bound is the content of \Cref{prop:correlations_with_integral}. Since the result might be interesting in its own right, we prove it in general, as follows.

\begin{theorem}\label{thm:main_2}
There exists $\delta_0 >0$ such that the following holds. Let $\{\tc_t\}_{t\in\R}$ be a smooth time-change of the horocycle flow on $M$.
Let $u,f,\ell \in W^7(M)$ with $\volal(u)=\volal(f)=\volal(\ell)=0$. For any $t\geq 2$, we have 
\[
    \Bigg\lvert \int_M \Big( \int_0^t u \circ \tc_r \diff r\Big) \cdot f\circ \tc_t \cdot \ell \volal \Bigg\rvert 
   \leq \Cal \|u\|_{W^7} \, \|f\|_{W^7} \, \|\ell\|_{W^7} \, E(t), 
\]
where
\[
   E(t) = 
   \begin{cases}
       \frac{R(t)^2}{t}, & \text{ if } \mu_0 \geq 1/4,\\
       t^{\nu_0-\delta_0}, & \text{ if } \mu_0 <1/4.
   \end{cases}
\]
\end{theorem}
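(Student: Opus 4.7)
The plan is to reduce the triple correlation to a finite sum of pair correlations, to which \Cref{thm:main} can be applied. The key input is an asymptotic expansion of the ergodic integral $F_t(x):=\int_0^t u\circ\tc_r(x)\diff r$, obtained via a time reparametrization.

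Using the substitution $\tc_r(x)=\horo_{\tau(r,x)}(x)$, where $\tau(t,x)$ satisfies $t=\int_0^{\tau(t,x)}\alpha(\horo_s x)\diff s$, I first rewrite
\[
F_t(x)=\int_0^{\tau(t,x)}(u\alpha)\circ\horo_s(x)\diff s.
\]
Since $\vol(\alpha)=1$, the cocycle difference $\tau(t,x)-t$ is itself a horocycle ergodic integral of the zero-mean function $\alpha-1$, and hence $\|\tau(t,\cdot)-t\|_{L^\infty}\lesssim R(t)\|\alpha\|_{W^7}$. This lets me replace the upper limit $\tau(t,x)$ by $t$ at a controllable cost. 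Applying the Flaminio--Forni asymptotic expansion \cite{FlaFo}, refined in the present paper to track the regularity of the coefficients, I obtain
\[
F_t(x)=\sum_{\nu_j\in(0,1)}t^{(1+\nu_j)/2}\,\Psi_j(x)+\Phi_t(x),
\]
where the sum ranges over the finitely many complementary-series Casimir exponents of $M$, each $\Psi_j$ is a smooth Casimir eigenfunction depending linearly on $u\alpha$ with $\|\Psi_j\|_{W^7}\lesssim\|u\|_{W^7}\|\alpha\|_{W^7}$, and $\Phi_t$ gathers the remaining terms (sub-leading complementary series and principal/discrete series), admitting a further orthogonal decomposition into Casimir eigencomponents with coefficients of magnitude at most $O(t^{1/2}\log t)$.

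Substituting this expansion splits the triple correlation into $\sum_j t^{(1+\nu_j)/2}\int_M\Psi_j\ell\,(f\circ\tc_t)\volal$ plus the contribution of $\Phi_t$. After subtracting the $\volal$-mean of $\Psi_j\ell$ (which contributes nothing since $\volal(f)=0$), the Sobolev-algebra bound $\|\Psi_j\ell\|_{W^7}\lesssim\|u\|_{W^7}\|\ell\|_{W^7}$ together with \Cref{thm:main} yields
\[
\Bigl|\int_M\Psi_j\ell\,(f\circ\tc_t)\volal\Bigr|\lesssim\|u\|_{W^7}\|f\|_{W^7}\|\ell\|_{W^7}\cdot\frac{R(t)}{t}
\]
when $\mu_0\geq 1/4$, giving the claimed $R(t)^2/t$ bound. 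For the $\Phi_t$ contribution, decomposing into individual Casimir eigencomponents and applying \Cref{thm:main} to each pair correlation produces a bound of order $t^{-1/2}(\log t)^{O(1)}$, which is strictly subsumed by the main estimate (the convergence of the sum over principal-series components follows from the $W^7$-regularity of $u$). When $\mu_0<1/4$, the explicit main term $t^{-1+\nu_0}\cC_{\alpha}(\Psi_j\ell,f)$ produced by \Cref{thm:main} contributes at most $t^{(\nu_j-1)/2+\nu_0}\leq t^{\nu_0-(1-\nu_0)/2}$, which is absorbed into the stated bound $t^{\nu_0-\delta_0}$ by taking $\delta_0\leq(1-\nu_0)/2$; the residual errors of order $t^{-1+\nu_0-\delta_0}$ from \Cref{thm:main} contribute analogously.

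The main technical obstacle is the refined Flaminio--Forni expansion in the first step: the coefficient functions $\Psi_j$ are a priori defined only through distributions of negative Sobolev order, and obtaining the quantitative bound $\|\Psi_j\|_{W^7}\lesssim\|u\|_{W^7}\|\alpha\|_{W^7}$ requires a careful analysis of the spectral projectors onto the complementary-series components of the Casimir operator, together with an effective bound on $\Phi_t$ and on the individual principal-series coefficients. This regularity refinement is precisely the new technical ingredient underlying the proof of \Cref{thm:main} and is highlighted in the introduction as the main innovation of the paper; here it is used one more time on the outside integral, while inside the triple it enters through the Sobolev-algebra estimate on $\Psi_j\ell$.
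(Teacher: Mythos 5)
There are two fundamental problems with your proposal.

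\textbf{First, the argument is circular.} Your plan is to reduce the triple correlation to pair correlations and then ``apply \Cref{thm:main}.'' However, in the paper's architecture \Cref{thm:main_2} is proved \emph{before} (and is used in the proof of) \Cref{thm:main}: \Cref{prop:correlations_with_integral}, which the paper derives directly from \Cref{thm:main_2}, is an essential ingredient in the proof of \Cref{thm:main} in Section~3.3. So you cannot invoke \Cref{thm:main} here. At best you could try to replace it with the weaker bound of Forni--Ulcigrai (\Cref{thm:FU_mixing}), but then the second problem becomes decisive.

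\textbf{Second, the regularity claim on the coefficient functions $\Psi_j$ is false.} You assert that the asymptotic expansion of $F_t(x)=\int_0^t u\circ\tc_r(x)\diff r$ takes the form $\sum_j t^{(1+\nu_j)/2}\Psi_j(x)+\Phi_t(x)$ with $\Psi_j$ a \emph{fixed} smooth Casimir eigenfunction satisfying $\|\Psi_j\|_{W^7}\lesssim\|u\|_{W^7}\|\alpha\|_{W^7}$, so that $\Psi_j\ell\in W^7(M)$ by the Sobolev algebra property. This is not true. The actual expansion (\Cref{thm:Rav} and \eqref{eq:integral_horo_tc}) has coefficients of the form $\cD_{\lambda_\bullet}(\alpha u)(\geo_{\log\tau(x,t)}(x),t)$, which depend on $t$ through a geodesic push-forward $\geo_{\log t}$. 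Since $\text{Ad}(\exp(-sX))V=e^sV$, the $V$-derivative of any function composed with $\geo_{\log t}$ is multiplied by $t$; so the full Sobolev norm $\|\Psi_j\|_{W^7}$ grows like $t^7$ and the bound you feed into the pair-correlation estimate is useless. Moreover, even before the geodesic composition, \Cref{thm:Rav} only guarantees $X$-differentiability (part (c)) and $U$-H\"older continuity (part (d)) for $\cP_{\lambda_\bullet}$ and $\cQ_{\lambda_\bullet}$; they are \emph{not} in $W^7(M)$. This is not a technicality that can be patched with more careful bookkeeping: the coefficients of ergodic-integral expansions for horocycle flows are genuinely distributional objects with low transverse regularity, which is exactly why matching Ratner's rates under time-change was open.

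The paper escapes both obstacles via the asymmetric correlation bound (\Cref{prop:improved_FU}, \Cref{prop:improved_FU_holder}, \Cref{cor:alternative_FU}): one observable needs full $W^6$-regularity, but the other needs only control on its $X$-derivative (or $X$-H\"older modulus). Because $X$ commutes with $\geo_{\log t}$, the $X$-regularity from \Cref{thm:Rav}(c) survives the geodesic push-forward (\Cref{lem:first_one}), so the coefficient functions $P_{\lambda_\bullet}(\cdot,t)$ and $Q_{\lambda_\bullet}(\cdot,t)$ are admissible as the ``low-regularity'' observable. The proof then bootstraps \Cref{cor:alternative_FU} (itself a consequence of the key shearing identity \Cref{lem:key_lemma}, not of \Cref{thm:main}) and, when $\mu_0<1/4$, closes the estimate with the H\"older-observable mixing bound \Cref{prop:improved_FU_holder} and Ratner's exponential mixing of the geodesic flow. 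Your proposal collapses precisely this asymmetry into a symmetric $W^7$ bound, which is what fails.
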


\medskip

\paragraph{\textbf{Notation}}
In order to avoid cumbersome notation, in this paper we will use the following conventions.
\begin{itemize}
    \item The symbol $\delta_0$ denotes a positive constant, which might change from line to line, that depends only on the space $M$; notably, on the spectrum of $\square$. 
    \item When writing $A\ll B$, we mean that there exists a constant $C_{\alpha}$ so that $A \leq C_{\alpha} \cdot B$. The constant $C_{\alpha}$, which might change from line to line, depends only on the space $M$ and on the $W^7$-norm of the function $\alpha$ inducing the time-change, and on the $W^7$-norm of $\alpha^{-1}$.
    \item The notation $t^{\frac{1}{2}+}$ and $t^{\frac{1}{2}-}$ stands for $t^{\frac{1}{2}}\log t$ and $t^{\frac{1}{2}}$ respectively. 
\end{itemize}

\section{Preliminaries}

\subsection{Time-changes}

Let us fix a positive function $\alpha \in W^7(M)$, with $\vol(\alpha) = 1$, and let $\{\tc_t\}_{t\in \R}$ be the associated time-change of $\{\horo_t\}_{t\in \R}$.
Since the orbits of $\{\tc_t\}_{t\in \R}$ and $\{\horo_t\}_{t\in \R}$ coincide, there exists a function $\tau \colon M \times \R \to \R$ defined by the equality $\tc_t(x) = \horo_{\tau(x,t)}(x)$. Then, differentiating both sides, we have 
\begin{equation}\label{eq:alpha_tau}
    \frac{1}{\alpha} \circ \tc_t(x) = \frac{\partial \tau}{\partial t}(x,t),
\end{equation}
and thus, since $\tau(x,0)=0$, we can write
\begin{equation}\label{eq:definition_tau}
    \tau(x,t) = \int_0^t\frac{1}{\alpha} \circ \tc_r(x) \diff r, \qquad \text{for every $x\in M$ and $t\in \R$}.
\end{equation}
Reversing the roles of the vector fields $U$ and $\Ual$, if we define $\eta(x,t)$ by $\horo_t(x)=\tc_{\eta(x,t)}(x)$, the same reasoning as above tells us that
\[
    \eta(x,t) = \int_0^t \alpha \circ \horo_r(x) \diff r.
\]
In particular, since $t = \eta(x, \tau(x,t))$, it follows that
\begin{equation}\label{eq:t_tau}
    t = \int_0^{\tau(x,t)}\alpha \circ \horo_r(x) \diff r.
\end{equation}
As Equation \eqref{eq:t_tau} suggests, a precise understanding of ergodic integrals of the horocycle flow is crucial for proving our main theorem. 
Standard results on the deviation of ergodic averages imply that, for every $x \in M$ and $t \geq 2$, we have
\begin{equation}\label{eq:t_tau_bound}
   |\tau(x, \pm t) \mp t| \ll \nal \, R(t).
\end{equation}

As a matter of fact, we will need to use a much stronger result on the asymptotic expansion of ergodic integrals.
We recall in the next subsection the theorem we need.

We collect below some estimates on $\tau(x,t)$ we will need in this paper. For the sake of notation, we define
\[
    \tau_{\pm}(x,t) = \pm \tau(x,\pm t).
\]

\begin{lemma}\label{lem:Xtau_over_tau}
For every $x\in M$ and for every $t \geq 2$, we have
\[
    \left\lvert \frac{X\tau_{\pm}(x, t)}{\tau_{\pm}(x, t)} +1- \frac{1}{\alpha \circ \tc_{\pm t}(x)}\right\rvert \ll \nal \, \frac{R(t)}{t}.
\]
Furthermore, for every $r\in\R$, we have
\[
    \big\lvert X[\tau_{\pm}(\horo_rx, t)]\big\rvert \ll \nal \, (t+|r|).
\]
\end{lemma}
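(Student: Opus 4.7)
The plan is to obtain both bounds by differentiating the implicit relation \eqref{eq:t_tau} with respect to $X$ (and in part 2, computing $U\tau$ by the analogous procedure), and then using the known rate of equidistribution of horocycle ergodic integrals for the zero-average function $X\alpha$.

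First I would establish the commutation identity $X[\phi\circ\horo_r]=(X\phi)\circ\horo_r+r(U\phi)\circ\horo_r$ for smooth $\phi$, which follows from $\geo_s\horo_r=\horo_{re^{-s}}\geo_s$ (equivalently $\horo_r\geo_s=\geo_s\horo_{re^s}$) in $\SL_2(\R)$. Applying $X$ to the identity $t=\int_0^{\tau(x,t)}\alpha\circ\horo_r(x)\diff r$ gives
\[
0=\alpha\circ\tc_t(x)\,X\tau(x,t)+\int_0^{\tau(x,t)}\!\Big\{(X\alpha)\circ\horo_r+r(U\alpha)\circ\horo_r\Big\}(x)\diff r.
\]
The term with $r(U\alpha)\circ\horo_r$ is handled by integration by parts in $r$, recognising $(U\alpha)\circ\horo_r=\frac{d}{dr}(\alpha\circ\horo_r)$, and using \eqref{eq:t_tau} to replace $\int_0^{\tau(x,t)}\alpha\circ\horo_r(x)\diff r$ by $t$. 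This yields the key identity
\[
X\tau(x,t)+\tau(x,t)-\frac{t}{\alpha\circ\tc_t(x)}=-\frac{1}{\alpha\circ\tc_t(x)}\int_0^{\tau(x,t)}(X\alpha)\circ\horo_r(x)\diff r.
\]

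For part one, I would divide through by $\tau(x,t)$, rewrite $\frac{t}{\alpha\circ\tc_t(x)\tau(x,t)}-\frac{1}{\alpha\circ\tc_t(x)}=\frac{1}{\alpha\circ\tc_t(x)}\cdot\frac{t-\tau(x,t)}{\tau(x,t)}$, and invoke \eqref{eq:t_tau_bound} to see this contributes $\ll\nal R(t)/t$. For the remaining term, since $\vol(X\alpha)=0$ (as $X$ preserves Haar measure on $\SL_2(\R)$), the standard deviation estimate for horocycle ergodic integrals gives $|\int_0^{\tau(x,t)}(X\alpha)\circ\horo_r(x)\diff r|\ll\xnal R(\tau(x,t))\ll\nal R(t)$; dividing by $\tau(x,t)\asymp t$ produces an error of the same order. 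Combining proves the first bound; for $\tau_-(x,t)=-\tau(x,-t)$ the same derivation with $t$ replaced by $-t$ and careful sign bookkeeping gives the matching estimate.

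For part two, I would first compute $U\tau$ by applying $U$ to \eqref{eq:t_tau}; here $U$ commutes with $\horo_r$, so the extra $r(U\cdots)$ term is absent and one obtains the clean formula $U\tau(y,t)=\frac{\alpha(y)-\alpha\circ\tc_t(y)}{\alpha\circ\tc_t(y)}$, which is bounded pointwise by $\ll\|\alpha-1\|_\infty\ll\nal$ via Sobolev embedding. Next, from the $\geo$--$\horo$ commutation the chain rule gives
\[
X\bigl[\tau(\horo_r\,\cdot\,,t)\bigr](x)=(X\tau)(\horo_rx,t)+r\,(U\tau)(\horo_rx,t).
\]
The identity from part one shows that $X\tau(y,t)=\tau(y,t)\bigl(\tfrac{1}{\alpha\circ\tc_t(y)}-1\bigr)+O(\nal R(t))$, and since $|\tfrac{1}{\alpha\circ\tc_t(y)}-1|\ll\nal$ while $\tau(y,t)\ll t$, we get $|X\tau(y,t)|\ll\nal t$. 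Combined with the $U\tau$ bound, this produces $|X[\tau(\horo_rx,t)]|\ll\nal(t+|r|)$, and the analogous computation yields the estimate for $\tau_-$.

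The main technical obstacle is really the ergodic integral estimate $|\int_0^{\tau(x,t)}(X\alpha)\circ\horo_r(x)\diff r|\ll\nal R(t)$: one must be sure to apply the bound at the (random) time $\tau(x,t)$ rather than at $t$, which is harmless because $\tau(x,t)\asymp t$ and $R$ is polynomial, but requires a brief justification. Everything else amounts to careful bookkeeping of signs and uniform control on $\alpha$ via the running assumption $\alpha\in W^7(M)$ bounded away from zero.
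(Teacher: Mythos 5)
Your proposal is correct and follows essentially the same route as the paper: differentiate the implicit relation \eqref{eq:t_tau} along $X$, use the commutation identity $X[\phi\circ\horo_r]=(X\phi)\circ\horo_r+r(U\phi)\circ\horo_r$, integrate by parts the $rU\alpha$ term against \eqref{eq:t_tau} to remove it, divide by $\tau$, and then control the remaining ergodic integral of the zero-average function $X\alpha$ by $\ll\|X\alpha\|_{W^6}R(\tau)\ll\nal\,R(t)$; the second bound comes from $X[\tau(\horo_r\cdot,t)]=X\tau(\horo_rx,t)+rU\tau(\horo_rx,t)$ together with $\|X\tau\|_\infty\ll\nal t$ from part one and $\|U\tau\|_\infty\ll\nal$. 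The only cosmetic difference is that you compute $U\tau$ by applying $U$ to \eqref{eq:t_tau} and using the fact that $U$ commutes with $\horo_r$, whereas the paper differentiates the other defining relation \eqref{eq:definition_tau}; the two derivations give the same formula $U\tau(y,t)=\alpha(y)\bigl(\tfrac{1}{\alpha\circ\tc_t(y)}-\tfrac{1}{\alpha(y)}\bigr)$.
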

\begin{proof}
    For any $t\in \R$, differentiating both sides of \eqref{eq:t_tau} gives us
    \[
        \begin{split}
        0 &= X\tau(x,t) \cdot \alpha \circ \tc_t(x) +  \int_0^{\tau(x,t)}X(\alpha \circ \horo_r(x)) \diff r \\
        &= X\tau(x,t) \cdot \alpha \circ \tc_t(x) +  \int_0^{\tau(x,t)}X\alpha \circ \horo_r(x) + rU\alpha \circ \horo_r(x) \diff r \\
        &=X\tau(x,t) \cdot \alpha \circ \tc_t(x) + \tau(x,t) \cdot \alpha \circ \tc_t(x) - t +  \int_0^{\tau(x,t)}X\alpha \circ \horo_r(x)\diff r.
        \end{split}
    \]
    Let now $t\geq 2$.
    Since $\alpha \gg 1$ and since, by \eqref{eq:t_tau_bound}, we have $t \ll \tau_{\pm}(x, t)$, it follows that 
    \[
        \left\lvert \frac{X\tau_{\pm}(x, t)}{\tau_{\pm}(x, t)} +1- \frac{t}{\tau_{\pm}(x, t) \cdot \alpha \circ \tc_{\pm t}(x)}\right\rvert \ll \|X\alpha\|_{W^6} \frac{R(t)}{t}.
    \]
    The first claim follows from the fact that $|t/\tau_{\pm}(x,t) -1|\ll \nal \, t^{-1}R(t)$.

    As for the second statement, for every $r\in \R$ we have
    \[
        X[\tau_{\pm}(\horo_rx, t)] = D\horo_r(X)\tau_{\pm}(\horo_rx, t) = X\tau_{\pm}(\horo_rx, t) + r U\tau_{\pm}(\horo_rx, t).
    \]
    The first part of this lemma implies that $\|X\tau_{\pm}(\cdot, t)\|_{\infty}\ll \nal \, \|\tau_{\pm}(\cdot, t)\|_{\infty}\ll \nal \, t$. 
    On the other hand, \eqref{eq:definition_tau} yields
    \[
        U\tau_{\pm}(y, t) = \alpha(y) \Ual \tau_{\pm}(y, t) = {\pm}\alpha(y) \left( \frac{1}{\alpha \circ \tc_{\pm t}(y) } -  \frac{1}{\alpha (y)}\right),
    \]
    which gives us $\|U\tau_{\pm}(\cdot, t)\|_{\infty}\ll \nal$, and completes the proof.
\end{proof}

\begin{lemma}\label{lem:tau_beta_minus_t_beta}
For every $t \geq 2$ and $x\in M$, we have
\[
    |\log \big( \tau_{\pm}(x, t) \big)-\log t| \ll \nal \, \frac{R(t)}{t}.
\]
Moreover, for any $b\in \C$ with $\Re( b) \in [0,1]$ we have 
\[
    |\tau_{\pm}(x, t)^b -t^b| \ll \nal \, |b| \, t^{\Re (b) }\frac{R(t)}{t}.
\]
In particular, $|R(\tau_{\pm}(x,t)) - R(t)| \ll \nal \, t^{\frac{1+\nu_0}{2} - \delta_0}$.
\end{lemma}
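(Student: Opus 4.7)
The plan is to reduce all three displays to the known bound $|\tau_{\pm}(x,t)-t|\ll\nal R(t)$ from \eqref{eq:t_tau_bound}, by invoking integral representations of $\log$ and of $s\mapsto s^b$ to convert differences into Lipschitz-type estimates. The only preliminary input is the two-sided comparison $\tau_{\pm}(x,t)\asymp t$, uniform in $x$, which follows directly from \eqref{eq:definition_tau} together with the fact that $\alpha$ and $\alpha^{-1}$ are bounded on the compact manifold $M$ (the implicit constants absorb these bounds, consistently with the $\ll$ convention in the notation section).

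For the first display I would write
\[
    \log\tau_{\pm}(x,t)-\log t=\int_t^{\tau_{\pm}(x,t)}s^{-1}\diff s;
\]
since both endpoints are comparable to $t$, the integrand is $\ll 1/t$ throughout, and hence $|\log\tau_{\pm}-\log t|\ll|\tau_{\pm}-t|/t\ll\nal R(t)/t$. For the second display I would use the analogous identity $\tau_{\pm}^b-t^b=b\int_t^{\tau_{\pm}}s^{b-1}\diff s$. Because $\Re(b)\in[0,1]$, the real-valued function $s\mapsto s^{\Re(b)-1}$ is non-increasing on $(0,\infty)$, so on the interval of integration it is bounded by $(\min(t,\tau_{\pm}))^{\Re(b)-1}\ll t^{\Re(b)-1}$. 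Combining with $|\tau_{\pm}-t|\ll\nal R(t)$ gives exactly $|\tau_{\pm}^b-t^b|\ll\nal\,|b|\,t^{\Re(b)}\,R(t)/t$.

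For the "in particular" clause one splits by cases. If $\mu_0\neq 1/4$, then $R(t)=t^{(1+\nu_0)/2}$, and the second display with $b=(1+\nu_0)/2\in[1/2,1)$ yields $|R(\tau_{\pm}(x,t))-R(t)|\ll\nal\,t^{\nu_0}=\nal\,t^{(1+\nu_0)/2-(1-\nu_0)/2}$; since $\mu_0>0$ forces $\nu_0<1$ strictly, any $\delta_0\in(0,(1-\nu_0)/2]$ works. If $\mu_0=1/4$, so $\nu_0=0$ and $R(t)=t^{1/2}\log t$, I would decompose
\[
    R(\tau_{\pm})-R(t)=\tau_{\pm}^{1/2}\bigl(\log\tau_{\pm}-\log t\bigr)+\bigl(\tau_{\pm}^{1/2}-t^{1/2}\bigr)\log t,
\]
and bound each summand by $\ll\nal(\log t)^2$ using the first two displays with $b=1/2$; for $t\geq 2$ this is absorbed into $\nal\,t^{1/2-\delta_0}$ for any $\delta_0<1/2$. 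I do not anticipate any genuine obstacle; the only points requiring mild care are ensuring that all implicit constants depend solely on $M$ and on the $W^7$-norms of $\alpha$ and $\alpha^{-1}$ (and not on $t$ or $b$), and that the exponent $\delta_0$ produced in the $\mu_0<1/4$ branch stays bounded away from $0$, which is automatic since $\nu_0$ depends only on $M$.
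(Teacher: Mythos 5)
Your proof is correct and follows essentially the same route as the paper: both reduce to $|\tau_{\pm}(x,t)-t|\ll\nal\,R(t)$ from \eqref{eq:t_tau_bound}, use a Mean Value / integral estimate to bound $|\log\tau_{\pm}-\log t|$ and $|\tau_{\pm}^b-t^b|\leq|b|\,|\tau_{\pm}-t|\min(\tau_{\pm},t)^{\Re(b)-1}$, and handle the ``in particular'' clause by case-splitting on $\mu_0$ with the same product-rule decomposition in the $\mu_0=1/4$ case. The only cosmetic difference is that you derive the key inequalities via integral representations while the paper states them directly; the content and the required input (the two-sided comparison $\tau_{\pm}\asymp t$, which you rightly flag) are identical.
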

\begin{proof}
    From the bound \eqref{eq:t_tau_bound}, we easily derive
    \[
        |\log \big( \tau_{\pm}(x, t)\big)-\log t| = \left\lvert \log \left(1+\frac{\tau_{\pm}(x, t) - t}{t} \right) \right\rvert \ll \left\lvert \frac{\tau_{\pm}(x, t) -t}{t} \right\rvert \ll \nal \, \frac{R(t)}{t},
    \]
    which proves the first statement. 
    
    Fix $b \in \C$. It is easy to see that 
    \[
        |\tau_{\pm}(x, t))^{b}-t^{b}| \leq |\tau_{\pm}(x,t) - t| \, |b| \, \min(\tau_{\pm}(x,t),t)^{\Re (b)-1},
    \]
    which proves the second claim.
    This also imply the bound on $|R(\tau_{\pm}(x,t)) - R(t)|$ in the case $\mu_0\neq 1/4$ from the definition of $R$, choosing $b = \frac{1+\nu_0}{2}$. For $\mu_0 = 1/4$, we use the fact that $ \tau_{\pm}(x, t) \ll t$ and the previous claim to conclude
    \[
    \begin{split}
        |\tau_{\pm}(x, t)^{\frac{1}{2}}\log \big( \tau_{\pm}(x, t)\big)-t^{\frac{1}{2}}\log t| &\ll | \tau_{\pm}(x, t)^{\frac{1}{2}}-t^{\frac{1}{2}}| \, \log t + t^{\frac{1}{2}} \, |\log \big( \tau_{\pm}(x, t)\big)-\log t| \\
        &\ll \nal \, \big((\log t)^2+\log t \big).
    \end{split}
    \]
    The proof is complete.
\end{proof}

\subsection{Ergodic integrals for time-changes}

Let $W \in \{X,U,V\}$ and let $j \in \N$. 
If $f \in L^{\infty}(M)$ is such that $Wf, \dots, W^jf \in L^{\infty}(M)$, we define
\[
    \|f\|_{W,j} := \|f\|_{\infty} + \|Wf\|_{\infty} + \cdots 
    + \|W^jf\|_{\infty}.
\]
We will often write $\|f\|_{W}$ instead of $\|f\|_{W,1}$. 
If $f \in L^{\infty}(M)$ is H\"{o}lder continuous in direction $W$ of some exponent $\beta \in (0,1)$, we define 
\[
    \|f\|_{W,\beta} := \|f\|_{\infty} + \sup_{\substack{ x\in M \\ r \in \R\setminus \{0\} }} \frac{|f(x\exp(rW)) - f(x)|}{r^{\beta}}.
\]
If $f \in L^{\infty}(M)$ is H\"{o}lder continuous of some exponent $\beta \in (0,1)$, we let 
\[
    \|f\|_{\beta} := \|f\|_{\infty} + \sup_{\substack{ x, y\in M \\ x\neq y }} \frac{|f(x) - f(y)|}{\dist(x,y)},
\]
where $\dist(x,y)$ is the distance between $x$ and $y$.

The following theorem, in a slightly different form, was proved in \cite{Rav1}; we explain below the differences in the notation. 
Let $\Spec_{\geq 0} := \Spec(\square) \cap \R_{\geq 0}$ and $\Spec_{> 0} := \Spec(\square) \cap \R_{> 0}$ denote, respectively, the set of nonnegative and of positive eigenvalues of the Casimir operator $\square$ on $M$. For every $\mu \in \Spec_{\geq 0}$, define $\nu=\nu(\mu)$ to be the unique $\nu \in [0,1] \cup i\R_{>0}$ such that $1-\nu^2 = 4\mu$. 
Furthermore, let 
\[
\lambda_{\pm} = \lambda_{\pm}(\mu) = 
\begin{cases}
    \frac{1 \pm \nu}{2} & \text{if $\mu \neq 1/4$,} \\
    \frac{1}{2}\pm & \text{if $\mu = 1/4$.} \\
\end{cases}
\]

\begin{theorem}\label{thm:Rav}
Fix $j\in \N$, and let $f\in W^{5+j}(M)$, with $\vol(f)=0$. For every $\mu \in \Spec_{\geq 0}$, there exist bounded functions
$\cP_{\lambda_{\pm}}f, \cQ_{\lambda_{\pm}}f \colon M\times [1,\infty) \to \C$ such that the following holds. Define
\[
    \begin{split}
    &\cD_{\lambda_{\pm}}f(x,T) := \cP_{\lambda_{\pm}}f (x,T)- \cQ_{\lambda_{\pm}}f (\horo_1(x),T).
    \end{split}
\]
For every $x\in M$ and for every $T\geq 1$, we have
\[
    \begin{split}
    \Bigg\lvert \int_{0}^{T} f \circ \horo_r(x) \diff r & - \sum_{\substack{ \mu \in \Spec_{\geq 0}\\ \bullet \in \{+,-\}}}T^{\lambda_{\bullet}} \, \cD_{\lambda_{\bullet}}f (\geo_{\log T}(x),T)\Bigg\rvert \ll \|f\|_{W^6}.
    \end{split}
\]
The functions $\cP_{\lambda_{\bullet}}f$ and $\cQ_{\lambda_{\bullet}}f$ satisfy the following properties:
\begin{itemize}
    \item[(a)] for $\mu = 0$, we have $\cD_{\lambda_{+}}f \equiv 0$ and $\cP_{\lambda_{-}}f = \cQ_{\lambda_{-}}f$,
    \item[(b)] for all $\mu \in \Spec_{\geq 0}$, $\cP_{\lambda_{\bullet}}f$ and $\cQ_{\lambda_{\bullet}}f$ are differentiable in $T$ and 
    \[
    \sum_{\substack{ \mu \in \Spec_{\geq 0}\\ \bullet \in \{+,-\}}} \left\|\frac{\partial}{\partial T}\cP_{\lambda_{\bullet}}f(\cdot, T) \right\|_{\infty} + \left\|\frac{\partial}{\partial T}\cQ_{\lambda_{\bullet}}f(\cdot, T) \right\|_{\infty}\ll \|f\|_{W^6} T^{-1-\lambda_{\bullet}},
    \]
    \item[(c)] for every $\mu \in \Spec_{>0}$, $\cP_{\lambda_{\bullet}}f$ and $\cQ_{\lambda_{\bullet}}f$ are $j$-times differentiable in the $X$--direction, and
    \[
    \sum_{\substack{ \mu \in \Spec_{>0}\\ \bullet \in \{+,-\}}} \|\cP_{\lambda_{\bullet}}f(\cdot, T)\|_{X,j} + \|\cQ_{\lambda_{\bullet}}f(\cdot, T)\|_{X,j} \ll \|f\|_{W^{5+j}},
    \]
    \item[(d)] for every $\mu \in \Spec_{>0}$, $\cP_{\lambda_{\bullet}}f$ and $\cQ_{\lambda_{\bullet}}f$ are H\"{o}lder continuous in the $U$-direction and, for any fixed $\beta < \frac{1-\nu_0}{2}$, we have
    \[
    \sum_{\substack{ \mu \in \Spec_{>0}\\ \bullet \in \{+,-\}}} \|\cP_{\lambda_{\bullet}}f(\cdot, T) \|_{U,\beta} + \|\cQ_{\lambda_{\bullet}}f(\cdot, T)\|_{U,\beta} \ll \|f\|_{W^6}.
    \]
\end{itemize}
\end{theorem}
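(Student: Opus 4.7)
The plan is to establish the theorem by spectral decomposition of $L^2(M)$ into isotypic components of the Casimir operator $\square$, and then to analyze the ergodic integral within each irreducible $\SL_2(\R)$-subrepresentation separately, building on the framework of Flaminio--Forni and Bufetov--Forni. First I would write $f = \sum_{\mu \in \Spec(\square)} f_\mu$ and, using standard decay of matrix coefficients controlled by the Sobolev norm $\|f\|_{W^{5+j}}$, reduce the statement to a uniform-in-$\mu$ estimate whose tail is summable over the spectrum.

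Within a single irreducible subrepresentation of parameter $\mu$, the space of $U$-invariant distributions is two-dimensional, spanned by distributions $\omega_{\lambda_+}, \omega_{\lambda_-}$ that are eigenvectors of the adjoint geodesic action with eigenvalues $\lambda_\pm(\mu)$. I would solve the cohomological equation $U h_\mu = f_\mu - (\text{explicit leading terms proportional to } \omega_{\lambda_\pm})$ in a smooth renormalization and integrate from $0$ to $T$. Using the intertwining identity $\geo_s \horo_r \geo_{-s} = \horo_{r e^s}$ to rescale $[0,T]$ to $[0,1]$, this yields an expansion of the form
\[
    \int_0^T f_\mu \circ \horo_r(x) \diff r = \sum_{\bullet \in \{+,-\}} T^{\lambda_\bullet} \big(\cP_{\lambda_\bullet} f_\mu(\geo_{\log T}(x), T) - \cQ_{\lambda_\bullet} f_\mu(\geo_{\log T}\horo_1(x), T)\big) + O(\|f_\mu\|_{W^6}),
\]
where $\cP_{\lambda_\bullet} f_\mu, \cQ_{\lambda_\bullet} f_\mu$ arise from pairings of $\omega_{\lambda_\bullet}$ against suitable smooth cut-off test functions supported on the orbit. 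The representation as a difference is natural because $\omega_{\lambda_\bullet}$ annihilates $U$-coboundaries, so the distributional pairing against $U h_\mu$ reduces to boundary contributions at the two endpoints.

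Properties (a)--(d) are then verified by inspecting the construction: (a) is immediate since the trivial representation is absorbed by the zero-average hypothesis; (b) follows because the $T$-dependence of $\cP, \cQ$ enters only through the smooth cut-off, giving the $T^{-1-\lambda_\bullet}$ derivative bound; (c) because $X$ acts as a derivation along the geodesic scaling, and each $X$-derivative costs one extra Sobolev order on $f_\mu$; and (d) because the Sobolev order of $\omega_{\lambda_\bullet}$ equals $\lambda_\mp = (1 \mp \nu)/2$, which translates via duality into $U$-Hölder regularity of any exponent strictly less than $(1-\nu_0)/2$ uniformly in the spectrum.

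The hard part will be the threshold case $\mu = 1/4$, where the two exponents $\lambda_\pm$ collide and the two leading contributions merge into the $T^{1/2}\log T$ asymptotic encoded by the notation $T^{1/2 \pm}$. I would address this via a confluent limit: choose a basis of $U$-invariant distributions depending holomorphically on $\nu$ whose difference admits a finite limit as $\mu \to 1/4$, so that one generator contributes the pure $T^{1/2}$ term and its $\nu$-derivative produces the logarithmic correction. A secondary difficulty is verifying that the per-representation remainders sum absolutely across the spectrum, which I expect to follow by combining the uniform $\mu$-bound with the fast decay of $\|f_\mu\|_2$ implied by the $W^6$-norm of $f$, after a Sobolev trace estimate to convert $L^2$-bounds into pointwise ones.
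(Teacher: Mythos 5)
Your plan follows a genuinely different route from the paper. The paper's proof of this theorem is essentially a restatement of results from the author's earlier work [Rav1]: after splitting $f$ into Casimir eigencomponents, it \emph{defines} $\cP_{\lambda_\bullet}f$ and $\cQ_{\lambda_\bullet}f$ by completely explicit formulas in which the dominant part is a geodesic-flow integral of the unstable derivative, namely
\[
\cQ_{\lambda_\bullet}f(x,T) = \frac{1}{\bullet\nu}\int_0^{\log T} e^{-\lambda_\bullet\xi}\, Vf_\mu\circ\geo_{-\xi}(x)\,\diff\xi,
\]
with $\cP_{\lambda_\bullet}f$ differing from $\cQ_{\lambda_\bullet}f$ by unit-length horocycle integrals of $f_\mu$ and $Xf_\mu$. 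These formulas come out of a direct Duhamel/renormalization computation ``\`a la Ratner'' in [Rav1, \S3--4], not from a Flaminio--Forni-type cohomological equation. You instead propose to work through the $U$-invariant distributions $\omega_{\lambda_\pm}$, solve $Uh_\mu = f_\mu - (\text{leading terms})$, and recover $\cP,\cQ$ as distributional pairings. That is a legitimate alternative framework (it is the Flaminio--Forni/Bufetov--Forni route), but it buys less here and costs more.

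There are two concrete issues with executing your plan. First, the formulation ``solve $Uh_\mu = f_\mu - (\text{leading terms proportional to }\omega_{\lambda_\pm})$'' is not literally meaningful, since $\omega_{\lambda_\pm}$ are distributions of positive Sobolev order and cannot be subtracted from a function; what one actually does in that framework is show the ergodic integral renormalizes under the geodesic flow and then identify the asymptotic coefficients as the pairings $\langle f_\mu,\omega_{\lambda_\pm}\rangle$ twisted along the orbit, which is already a nontrivial argument rather than an immediate consequence of solving a cohomological equation. Second, and more importantly, the regularity claims (b)--(d) are precisely where the two approaches diverge in difficulty. In the paper's construction, $T$-differentiability and the bound $T^{-1-\lambda_\bullet}$ on $\partial_T\cP,\partial_T\cQ$, the $X$-regularity up to order $j$, and the $U$-H\"older bound of exponent $\beta<\frac{1-\nu_0}{2}$ all fall out by direct differentiation of the explicit $V$-integral formula, with the Sobolev Embedding Theorem handling the sum over the spectrum ([Rav1, Lemma 15]). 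Your plan handles (b)--(d) by ``inspecting the construction,'' appealing to duality with the Sobolev order of $\omega_{\lambda_\pm}$; but deriving H\"older continuity of the coefficient functionals from that abstract description, uniformly over $\mu$ and with the right exponent, is itself a substantial theorem (cf.\ Bufetov--Forni) and cannot be taken for granted at the stated level of detail. In short, the idea is sound in spirit, but the specific structure $\cD_{\lambda_\bullet} = \cP_{\lambda_\bullet} - \cQ_{\lambda_\bullet}\circ\horo_1$ together with properties (b)--(d) are much more naturally produced by the explicit renormalization of [Rav1] than by the invariant-distribution route you outline.
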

\begin{proof}
    Let $f \in W^{5+j}(M)$, and let us write $f = \sum_{\mu \in \Spec(\square)}f_\mu$, where $f_\mu$ is an eigenfunction of the Casimir operator $\square$ with eigenvalue $\mu$. By \cite[Theorem 1]{Rav1} and \cite[Lemma 15]{Rav1}, the contribution of all components corresponding to $\mu <0$ can be bounded uniformly in $T$, and hence can be absorbed into the error term.

    For the component corresponding to $\mu = 0$, we have
    \[
    \Bigg\lvert \int_0^Tf_{\mu}\circ \horo_r(x) \diff r - \int_0^{\log T} - (Vf_0 \circ \geo_{\log T - \xi} - Vf_0 \circ \geo_{- \xi} \circ \horo_1 \circ \geo_{\log T })(x)\diff \xi\Bigg\rvert  \ll \|f_0\|_{\mathscr{C}^1},
    \]
    see \cite[Section 5]{Rav1}, hence we define 
    \[
    \cP_{\lambda_{-}(0)}f(x,T) = \cQ_{\lambda_{-}(0)}f(x,T) = - \int_0^{\log T}Vf_0 \circ \geo_{- \xi} (x)\diff \xi.
    \]
    This proves (a).
    
    For all the other $\mu \in \Spec_{>0} \setminus \{1/4\}$, the results in \cite[Sections 3--4]{Rav1} show that
    \[
        \int_0^Tf_{\mu}\circ \horo_r(x) \diff r = \sum_{\bullet \in \{+,-\}} T^{\lambda_{\bullet}} ( \cP_{\lambda_{\bullet}}f (x,T)- \cQ_{\lambda_{\bullet}}f(\horo_1(x),T)),
    \]
    where
    \[
    \begin{split}
        &\cQ_{\lambda_{\bullet}}f(x,T) = \frac{1}{\bullet \nu} \int_0^{\log T} e^{-\lambda_{\bullet}\xi}Vf_{\mu} \circ \geo_{- \xi} (x) \diff \xi, \qquad \text{and}\\
        &\cP_{\lambda_{\bullet}}f(x,T) = \cQ_{\lambda_{\bullet}}f(x,T) + \frac{1\bullet \nu}{\bullet 2\nu} \int_0^1 f_{\mu}\circ \horo_r(x)\diff r - \frac{1}{\bullet\nu} \int_0^1 Xf_{\mu}\circ \horo_r(x)\diff r.
    \end{split}
    \]
    Similar formulas hold in the case $\mu = 1/4$, see \cite[Section 4.2]{Rav1}.

    Since
    \[
    \frac{\partial}{\partial T}\cP_{\lambda_{\bullet}}f(x, T) = \frac{\partial}{\partial T}\cQ_{\lambda_{\bullet}}f(x, T) = \bullet \frac{1}{\nu T} \, T^{-\lambda_{\bullet}} Vf_{\mu} \circ \geo_{-\log T}(x),
    \]
    the claim (b) follows from the Sobolev Embedding Theorem as in \cite[Lemma 15]{Rav1}.

    Parts (c) is immediate using \cite[Lemma 15]{Rav1} and (d) is proved in   \cite[Section 4.4]{Rav1}.
\end{proof}

\begin{remark}\label{rmk:Rav}
    From the fact that $\int_0^{-T} f \circ \horo_r(x) \diff r = - \int_0^{T} f \circ \horo_r(\horo_{-T}x) \diff r$, it follows that 
    \[
    \begin{split}
        \Bigg\lvert \int_{0}^{-T} f \circ \horo_r(x) \diff r - \sum_{\substack{ \mu \in \Spec_{\geq 0}\\ \bullet \in \{+,-\}}}T^{\lambda_{\bullet}} \, (-\cD_{\lambda_{\bullet}}f (\horo_{-1} \circ \geo_{\log T}(x),T))\Bigg\rvert \ll \|f\|_{W^6},
    \end{split}
    \]
    for all $x\in M$ and $T\geq 2$. We will use the shorthand notation
    \[
    \cD_{\lambda_{\bullet}}^{+}f (x,T) = \cD_{\lambda_{\bullet}}f (x,T), \qquad \text{and} \qquad \cD_{\lambda_{\bullet}}^{-}f (x,T) = -\cD_{\lambda_{\bullet}}f (\horo_{-1}(x),T).
    \]
\end{remark}
From \Cref{thm:Rav} and \Cref{rmk:Rav}, one can easily recover the upper bound
\[
    \left\lvert \int_{0}^{\pm T} f \circ \horo_r(x) \diff r\right\rvert \ll \|f\|_{W^6} \, R(T),
\] 
for every $T\geq 2$.

\Cref{thm:Rav} can be used to study ergodic integrals for time-changes, since, for any $f \in W^6(M)$ and any $x\in M$, by \eqref{eq:alpha_tau} we have
\begin{equation}\label{eq:change_integrals}
    \int_0^t f \circ \tc_r(x) \diff r = \int_0^t (\alpha f) \circ \horo_{\tau(x,r)}(x) \cdot \frac{\partial \tau}{\partial r}(x,r) \diff r = \int_0^{\tau(x,t)} (\alpha f) \circ \horo_{r}(x) \diff r. 
\end{equation}
Therefore, for every $t\geq 2$, we have
\begin{equation*}
    \Bigg\lvert\int_0^{\pm t} f \circ \tc_r(x) \diff r - \sum_{\substack{ \mu \in \Spec_{\geq 0}\\ \bullet \in \{+,-\}}}\tau_{\pm}(x,t)^{\lambda_{\bullet}} \, \cD_{\lambda_{\bullet}}^{\pm}(\alpha f) (\geo_{\log \tau_{\pm}(x,t)}(x),\tau_{\pm}(x,t)) \Bigg\rvert \ll \|f\|_{W^6}. 
\end{equation*}
We note that, by \Cref{thm:Rav}-(b), we have 
\[
\sum_{\substack{ \mu \in \Spec_{\geq 0}\\ \bullet \in \{+,-\}}} \big|\tau_{\pm}(x,t)^{\lambda_{\bullet}} \, \cD_{\lambda_{\bullet}}^{\pm}(\alpha f) (\geo_{\log \tau_{\pm}(x,t)}(x),\tau_{\pm}(x,t)) - \tau_{\pm}(x,t)^{\lambda_{\bullet}} \, \cD_{\lambda_{\bullet}}^{\pm}(\alpha f) (\geo_{\log \tau_{\pm}(x,t)}(x),t)\big| \ll \frac{R(t)}{t} \|f\|_{W^6}, 
\]
thus we can conclude
\begin{equation}\label{eq:integral_horo_tc}
    \Bigg\lvert\int_0^{\pm t} f \circ \tc_r(x) \diff r - \sum_{\substack{ \mu \in \Spec_{\geq 0}\\ \bullet \in \{+,-\}}}\tau_{\pm}(x,t)^{\lambda_{\bullet}} \, \cD_{\lambda_{\bullet}}^{\pm}(\alpha f) (\geo_{\log \tau_{\pm}(x,t)}(x),t) \Bigg\rvert \ll \|f\|_{W^6}. 
\end{equation}

\begin{remark}\label{rmk:principal_series}
    If $\alpha f$ is supported only on the principal series, then we have 
    \begin{equation*}
    \Bigg\lvert\int_0^{\pm t} f \circ \tc_r(x) \diff r - \sum_{\substack{ \mu \in \Spec(\square) \cap (1/4,\infty)\\ \bullet \in \{+,-\}}}\tau_{\pm}(x,t)^{\lambda_{\bullet}} \, \cD_{\lambda_{\bullet}}^{\pm}(\alpha f) (\geo_{\log \tau_{\pm}(x,t)}(x),t) \Bigg\rvert \ll \frac{R(t)}{t}  \|f\|_{W^6}. 
\end{equation*}
\end{remark}

\subsection{Push-forward of geodesic arcs}

We are interested in the push-forward of the vector field $X$ by the action of the flow $\{\tc_t\}_{t\in \R}$. 
At the heart of the mixing-via-shearing method lies the fact that geodesic arcs get sheared along the flow direction. More precisely, we define
\[
    A^0_t(x) = \int_0^t \frac{X\alpha}{\alpha} \circ \tc_r \diff r, \qquad \text{and} \qquad A_t(x) = \int_0^t\left(1- \frac{X\alpha}{\alpha}\right) \circ \tc_r \diff r = t-A_t^0(x).
\]
Then, it is possible to prove that
\begin{equation}\label{eq:push-forward_X}
	D\tc_t(X) = X + A_t(x) \Ual,
\end{equation}
see, e.g., \cite[Section 3.1]{Sim}\footnote{See also \cite[Lemma 1]{FU}, up to sign.}. 
The quantity $ A^0_t(x)$ represents the distortion; namely, the deviation of the shear from a linear function.
From the results in the previous subsection, we can deduce the asymptotics behaviour of $A^0_{\pm t}$. 
If $\mu_0 \geq 1/4$, then, for all $t \geq 2$, we have
\[
 \|A^0_{\pm t}(x)\|_{\infty} \ll \xnal \, R(t). 
\]
Otherwise, if $\mu_0 < 1/4$, we let $\lambda_0 = \lambda_{+}(\mu_0) = \frac{1+\nu_0}{2}$ and we define 
\[
    \cA^{\pm}_t(x) := \cD_{\lambda_0}^{\pm}(X\alpha)(x,t).
\]
\Cref{thm:Rav} and the subsequent discussion imply that 
there exists\footnote{As previously mentioned, $\delta_0$ depends only on $M$ (namely, on the set $\Spec_{> 0}$).} $\delta_0 >0$ so that, for all $t\geq 2$, 
\[
    \|A^0_{\pm t}(x) - \tau_{\pm}(x, t)^{\lambda_0 } \, \cA_t^{\pm} \circ \geo_{\log \tau_{\pm}(x, t)}\|_{\infty} \ll t^{\lambda_0  - \delta_0}.
\]
By \Cref{lem:tau_beta_minus_t_beta} and the H\"{o}lder property of $\cA_t^{\pm}$, for every $x \in M$, we have that 
\[
\begin{split}
    &|\tau_{\pm}(x, t)^{\lambda_0}-t^{\lambda_0}| \ll t^{\lambda_0^2-1} \ll  t^{\lambda_0 - \delta_0},\\
    &| \cA_t^{\pm} \circ \geo_{\log \tau_{\pm}(x, t)} (x) - \cA_t^{\pm} \circ \geo_{\log t} (x) | \ll | \log (\tau_{\pm}(x, t)) - \log t|^{\frac{1-\nu_0}{2}} \ll t^{-\frac{(1-\nu_0)^2}{4}}\ll t^{-\delta_0}.
\end{split}
\]
We can thus conclude that, for all $t\geq 2$, 
\begin{equation}\label{eq:bound_on_A0t}
\begin{split}
    &\|A^0_{\pm t}(x)\|_{\infty} \ll \nal \, R(t) \qquad \text{ if $\mu_0 \geq 1/4$,}\\
    &\|A^0_{\pm t}(x) - t^{\lambda_0} \, \cA_t^{\pm} \circ \geo_{\log t}\|_{\infty} \ll t^{\lambda_0 - \delta_0} \qquad \text{ if $\mu_0 < 1/4$.}
\end{split}
\end{equation}

Clearly, we can replace $A^0_{\pm t}(x)$ with the ergodic integral of any other $w \in W^6(M)$ with $\volal(w)=0$ and obtain the same conclusion, as stated in the next lemma.
\begin{lemma}\label{lem:Holder_W}
Let $w \in W^6(M)$ with $\volal(w)=0$. 
If $\mu_0 \geq 1/4$, then for every $x \in M$ and $t\geq 2$, we have
\[
    \left\lvert \int_{0}^{\pm t} w \circ \tc_r(x) \diff r \right\rvert \ll \|w\|_{W^6} \, R(t).
\]
Otherwise, if $\mu_0 < 1/4$, there exist H\"{o}lder continuous functions $\cW_t^{\pm}$ of exponent $\beta = \frac{1-\nu_0}{2}$, satisfying $\|\cW_t^{\pm}\|_{\beta} \ll \|w\|_{W^6}$, for which the following holds. For any $x \in M$ and $t\geq 2$, we have 
\[
\begin{split}
    &\left\lvert \int_{0}^{\pm t} w \circ \tc_r(x) \diff r - t^{\lambda_0} \cW_t^{\pm} \circ \geo_{\log t}(x)\right\rvert \ll \|w\|_{W^6} \, t^{\lambda_0 - \delta_0}.
\end{split}
\] 
\end{lemma}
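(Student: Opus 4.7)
The plan is to mimic, almost verbatim, the argument carried out for $A^0_{\pm t}$ just above \eqref{eq:bound_on_A0t}, with $X\alpha$ replaced by $\alpha w$. Since $\volal(w) = 0$ is equivalent to $\vol(\alpha w) = 0$, the function $\alpha w \in W^6(M)$ is an admissible input to \Cref{thm:Rav}. By \eqref{eq:change_integrals}, \eqref{eq:integral_horo_tc} and \Cref{rmk:Rav},
\[
\int_0^{\pm t} w \circ \tc_r(x)\, \diff r = \sum_{\substack{\mu \in \Spec_{\geq 0} \\ \bullet \in \{+,-\}}} \tau_{\pm}(x,t)^{\lambda_{\bullet}} \, \cD^{\pm}_{\lambda_{\bullet}}(\alpha w)\big(\geo_{\log \tau_{\pm}(x,t)}(x),\, t\big) + O(\|w\|_{W^6}),
\]
and the $\mu = 0$ summand vanishes by part (a) of \Cref{thm:Rav}, since $\vol(\alpha w) = 0$. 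If $\mu_0 \geq 1/4$, I would then apply the standard upper bound $\big|\int_0^T (\alpha w) \circ \horo_r\,\diff r\big| \ll \|\alpha w\|_{W^6} \, R(T)$ at $T = \tau_{\pm}(x,t) \ll t$, which gives the first part of the statement immediately.

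\medskip

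For $\mu_0 < 1/4$, I would isolate the dominant summand by defining $\cW_t^{\pm}(x) := \cD^{\pm}_{\lambda_0}(\alpha w)(x, t)$, and bound the remaining terms via a uniform spectral-gap constant $\delta_0 > 0$ depending only on $M$. The remaining exponents come from three sources: $\lambda_{-}(\mu_0) = \lambda_0 - \nu_0$; $\re \lambda_{\bullet} = 1/2 = \lambda_0 - \nu_0/2$ whenever $\mu \geq 1/4$; and $\lambda_{+}(\mu) < \lambda_{+}(\mu_0)$ for any other $\mu \in (0, 1/4)$, with a strict gap coming from the discreteness of $\Spec(\square)$ on the compact manifold $M$. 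Therefore $\re \lambda_{\bullet} \leq \lambda_0 - \delta_0$ for every $(\mu, \bullet) \neq (\mu_0, +)$, and summing with the uniform $L^\infty$ bounds implicit in parts (c)--(d) of \Cref{thm:Rav} produces a contribution of size $\ll \|w\|_{W^6} \, t^{\lambda_0 - \delta_0}$.

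\medskip

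It then remains to replace $\tau_{\pm}(x, t)^{\lambda_0}$ by $t^{\lambda_0}$ and $\geo_{\log \tau_{\pm}(x, t)}(x)$ by $\geo_{\log t}(x)$. \Cref{lem:tau_beta_minus_t_beta} handles the first substitution at a cost of $O(t^{2\lambda_0 - 1}\|w\|_{W^6})$, while the $X$-Lipschitz regularity of $\cW_t^{\pm}$ supplied by part (c) of \Cref{thm:Rav}, combined with $|\log \tau_{\pm} - \log t| \ll R(t)/t = t^{\lambda_0 - 1}$, produces a second error of the same order. Both are absorbed into $t^{\lambda_0 - \delta_0}$ provided $\delta_0 \leq 1 - \lambda_0 = \frac{1-\nu_0}{2}$. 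The Hölder bound $\|\cW_t^{\pm}\|_\beta \ll \|w\|_{W^6}$ with $\beta = \frac{1-\nu_0}{2}$ then follows directly from part (d) of \Cref{thm:Rav}, possibly after shrinking $\beta$ by an arbitrarily small amount to reach the supremum endpoint. The only non-mechanical step, and the principal obstacle, is verifying that a single $\delta_0$ can be chosen to dominate simultaneously all three spectral-gap errors together with the substitution error $1 - \lambda_0$; on a compact surface this is automatic.
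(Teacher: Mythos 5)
Your proof is correct and follows the same route the paper takes: the paper essentially declines to prove the lemma, observing only that it is obtained by replacing $X\alpha$ with $\alpha w$ in the argument carried out just above \eqref{eq:bound_on_A0t} for $A^0_{\pm t}$, and your write-up is exactly that argument made explicit. One small imprecision: part (a) of \Cref{thm:Rav} alone does not force the $\mu=0$ summand to vanish (it gives $\cD_{\lambda_+}\equiv0$ and $\cP_{\lambda_-}=\cQ_{\lambda_-}$); rather, since $\vol(\alpha w)=0$ the $\mu=0$ component of $\alpha w$ is trivial so the term is zero (or, at worst, the $\lambda_-(0)=0$ contribution is $O(\|w\|_{W^6})$ and is absorbed into the error in either case), so your conclusion stands.
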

We conclude this section with some further estimates on $A_t$. Let $u$ be any $\mathscr{C}^1$ function on $M$. Then, by \eqref{eq:push-forward_X}, we have
\begin{equation}\label{eq:X_deriv_u}
\begin{split}
    X\Big(\int_0^t u \circ \tc_r \diff r\Big) &= \int_0^t X\big(u \circ \tc_r \big) \diff r = \int_0^t A_r \cdot \Ual u \circ \tc_r \diff r + \int_0^t X u \circ \tc_r \diff r \\
    &= A_t \cdot u \circ \tc_t - \int_0^t \frac{\diff}{\diff r} A_r \cdot u \circ \tc_r \diff r + \int_0^t X u \circ \tc_r \diff r  \\
    &= A_t \cdot u \circ \tc_t - \int_0^t \Big( u- \frac{X\alpha}{\alpha} u - Xu \Big) \circ \tc_r \diff r.
\end{split}
\end{equation}
Thus, we obtain the following lemma, as in \cite[Lemma 7]{FU}.

\begin{lemma}\label{lem:estimate_At}
Let $u \in \mathscr{C}^1(M)$. 
For all $t \geq 2$ and all $x\in M$, we have
\[
    \left\lvert X\Big(\int_0^t u \circ \tc_r(x) \diff r\Big)\right\rvert \ll t \|u\|_{\mathscr{C}^1}.
\]
In particular, $|XA_{\pm t}(x)|\ll t$.
\end{lemma}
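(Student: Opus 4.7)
The identity \eqref{eq:X_deriv_u} derived immediately above the lemma statement already does essentially all the work, so the plan is simply to estimate the three terms in it in sup-norm. Writing
\[
    X\Big(\int_0^t u \circ \tc_r \diff r\Big) = A_t \cdot u \circ \tc_t - \int_0^t \Big( u- \frac{X\alpha}{\alpha} u - Xu \Big) \circ \tc_r \diff r,
\]
the first step is to control $A_t$. Since $A_t = t - A^0_t$, the estimates in \eqref{eq:bound_on_A0t} (valid in both the $\mu_0 \geq 1/4$ and $\mu_0 < 1/4$ regimes, and giving $|A^0_t(x)| \ll R(t)$ or $|A^0_t(x)| \ll t^{\lambda_0}$ respectively, both of which are $o(t)$) immediately yield $|A_t(x)| \ll t$ uniformly in $x\in M$ and $t\geq 2$. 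Combined with $|u\circ \tc_t(x)| \leq \|u\|_\infty$, this handles the boundary term.

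For the integral term, I would simply use the triangle inequality to get the sup-norm bound
\[
    \Bigl\| u - \tfrac{X\alpha}{\alpha} u - Xu \Bigr\|_\infty \leq \Bigl( 1 + \bigl\|\tfrac{X\alpha}{\alpha}\bigr\|_\infty \Bigr) \|u\|_\infty + \|Xu\|_\infty \ll \|u\|_{\mathscr{C}^1},
\]
where the factor $\|X\alpha/\alpha\|_\infty$ is absorbed into the implicit constant via the convention on $\ll$ (it depends only on $\|\alpha\|_{W^7}$ and $\|\alpha^{-1}\|_{W^7}$, both by Sobolev embedding into $\mathscr{C}^1$). Integrating over $[0,t]$ contributes another factor of $t$, which gives the desired bound $\ll t\|u\|_{\mathscr{C}^1}$ for the full expression. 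The case $t \leq -2$ is identical: either re-derive \eqref{eq:X_deriv_u} for negative times, or use the change of variable $r \mapsto -r$ together with the identity $\int_0^{-t} u\circ \tc_r\,\diff r = -\int_0^t u\circ \tc_{-r}\,\diff r$ to reduce to the positive case.

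For the last claim, one just writes $A_{\pm t}(x) = \pm t - A^0_{\pm t}(x)$, so that $X A_{\pm t}(x) = - X A^0_{\pm t}(x)$, and applies the main estimate with $u = X\alpha/\alpha \in \mathscr{C}^1(M)$ (whose $\mathscr{C}^1$-norm is again controlled by $\|\alpha\|_{W^7}$ and $\|\alpha^{-1}\|_{W^7}$, hence absorbed into $\ll$). There is no genuine obstacle here; the only mild point to verify is that the identity \eqref{eq:X_deriv_u}, as written for $t\geq 0$, transfers to $t\leq 0$ with the same form, which is a routine sign-tracking exercise from $D\tc_t(X) = X + A_t \, U_\alpha$.
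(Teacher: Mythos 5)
Your proof is correct and follows essentially the same route the paper intends (the paper just states the lemma after deriving \eqref{eq:X_deriv_u} and refers to \cite[Lemma 7]{FU}). One small simplification worth noting: to get $|A_t(x)|\ll t$ you invoke \eqref{eq:bound_on_A0t}, but this is heavier machinery than needed — directly from the definition $A_t(x)=\int_0^t(1-\tfrac{X\alpha}{\alpha})\circ\tc_r(x)\,\diff r$ one has the trivial bound $|A_t(x)|\leq |t|\,(1+\|X\alpha/\alpha\|_\infty)\ll t$, which is cleaner and keeps the lemma independent of the asymptotic-expansion results. The rest (sup-norm estimate of the integrand, verification that \eqref{eq:X_deriv_u} persists for negative times, and applying the estimate with $u=X\alpha/\alpha$) is exactly right.
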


\section{A refined mixing-via-shearing argument}

In this section, we prove our main theorem by developing a new version of the mixing-via-shearing argument. 
We start by proving some preliminary result.

\subsection{Variations on Forni and Ulcigrai's result}

First of all, we prove a couple of variations of Forni and Ulcigrai's result (i.e., of \Cref{thm:FU_mixing}) that are better suited for our purposes.
An interesting feature of their result, which will be crucial in our argument, is the \lq\lq asymmetry\rq\rq\ in the observables: while the bound in \Cref{thm:FU_mixing} involves the $\|\cdot\|_{W^6}$-norm of $f$, the dependence on the function $\ell$ is only in terms of the norm $\|\cdot\|_X$. We will make full advantage of this fact.

In \Cref{prop:improved_FU} below, we prove an upper bound on the correlations between two functions $w$ and $v$ which depends on a free parameter $\sigma \in (0,1)$. The idea will be to optimize the upper bound by choosing $\sigma$ appropriately, depending on the values of $\|v\|_{\infty}$ and of $\|v\|_{X}$.

\begin{proposition}\label{prop:improved_FU}
Let $w\in W^6(M)$, with $\volal(w)=0$, and let $v\in L^{\infty}(M)$ with $Xv\in  L^{\infty}(M)$. For every $\sigma \in (0,1)$ and for every $t\geq 2$, we have 
\begin{equation*}
    \left\lvert \int_M w\circ \tc_{\pm t} \, v \volal \right\rvert
    \ll \|w\|_{W^6} \Big( \|v\|_{\infty} \frac{R(\sigma t)}{\sigma t} + \|v\|_{X} \frac{R(\sigma t)}{t} \Big).
\end{equation*}
\end{proposition}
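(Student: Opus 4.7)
The argument is a refinement of the mixing-via-shearing scheme from \cite[Proof of Theorem~19]{FU}, in which the averaging length is kept as a free parameter $\sigma\in(0,1)$. First, exploiting the $\vol$-invariance of $\geo_s$, I would rewrite
\[
\int_M w\circ\tc_{\pm t}\cdot v\volal \;=\; \frac{1}{\sigma}\int_0^\sigma \int_M (w\circ \tc_{\pm t}\circ\geo_s)\,(v\circ\geo_s)\,(\alpha\circ\geo_s)\diff\vol\,\diff s,
\]
and, after Fubini, focus on the inner integral in $s\in[0,\sigma]$ for each fixed $x\in M$.

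Next comes the shearing step. By the push-forward identity \eqref{eq:push-forward_X}, the curve $s\mapsto \tc_{\pm t}(\geo_s x)$ is tangent to $X+A_{\pm t}\Ual$; since $|A_{\pm t}(x)|\asymp t$ dominates the unit $X$-component, this curve closely tracks the $\horo$-orbit through $y=\tc_{\pm t}(x)$ over a horocycle arc of length $\asymp \sigma t$. After the corresponding change of variable $s\leftrightarrow r$, whose Jacobian is of order $A_{\pm t}(x)/\alpha(y)\asymp t$, the inner integral becomes (up to controlled errors) a horocycle ergodic integral of $\alpha w$. Applying the bound $\big|\int_0^T (\alpha w)\circ\horo_r\diff r\big|\ll \|w\|_{W^6}\,R(T)$, which follows from \Cref{thm:Rav} via the conversion \eqref{eq:change_integrals}, with $T\asymp \sigma t$, then dividing by the averaging prefactor $\sigma$, and integrating against $|v\alpha|$, produces the principal contribution $\|w\|_{W^6}\,\|v\|_\infty\, R(\sigma t)/(\sigma t)$.

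Finally, for the error analysis: replacing $v\circ\geo_s$ by $v(x)$ at each $s$ costs at most $s\,\|Xv\|_\infty\le\sigma\|v\|_X$ per point; after combining with the horocycle bound $R(\sigma t)$ and the Jacobian $\asymp t$ from the shearing, this yields the second term $\|w\|_{W^6}\,\|v\|_X\, R(\sigma t)/t$. Analogous Taylor corrections from $\alpha\circ\geo_s$ (using $\alpha\in W^7$) and from the variation $|A_{\pm t}(\geo_s x)-A_{\pm t}(x)|\ll st$ (by \Cref{lem:estimate_At}) are strictly lower order and absorbed in $\ll$.

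The main obstacle is carrying out the shearing step with sufficient precision so that the resulting error scales with $R(\sigma t)/(\sigma t)$ rather than with $R(t)/t$: this requires that the quantitative control on the distortion, namely $\|A^0_{\pm t}\|_\infty\ll R(t)$ from \eqref{eq:bound_on_A0t} and $|X A_{\pm t}|\ll t$ from \Cref{lem:estimate_At}, is applied uniformly in $x$ so that no spurious factor of $t$ leaks into the correction terms when $\sigma$ is small.
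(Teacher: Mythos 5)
Your opening step -- averaging over $\geo_s$ for $s\in[0,\sigma]$ using $\geo$-invariance of $\vol$ -- matches the paper, and the idea that the pushed-forward geodesic arc tracks a horocycle arc of length $\asymp\sigma t$ is the correct intuition behind the sheared-arc estimate. But the proof as written has two real problems.

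First, the shearing step is not a clean change of variables $s\leftrightarrow r$. The curve $s\mapsto\tc_{\pm t}(\geo_s x)$ is tangent to $X+A_{\pm t}\Ual$, so the natural arclength element in the $\Ual$-direction is $A_{\pm t}\circ\geo_s\,\diff s$, \emph{not} $\alpha\circ\geo_s\,\diff s$. The sheared-arc estimate from \cite[Lemma 17]{FU} reads
$\big|\int_0^s w\circ\tc_{\pm t}\circ\geo_r\cdot A_{\pm t}\circ\geo_r\,\diff r\big|\ll\|w\|_{W^6}R(\sigma t)$.
To invoke it you must first insert $A_{\pm t}/A_{\pm t}$, i.e., write the inner integral as
$\frac{1}{\sigma}\int_0^\sigma w\circ\tc_{\pm t}\circ\geo_s\cdot A_{\pm t}\circ\geo_s\cdot\big(\tfrac{\alpha v}{A_{\pm t}}\big)\circ\geo_s\,\diff s$,
so that the slowly-varying factor $\tfrac{\alpha v}{A_{\pm t}}\circ\geo_s$ — not $v\circ\geo_s$ alone — is what must be separated from the oscillatory integral.

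Second, and more seriously, the error analysis does not close. Bounding $|v\circ\geo_s-v|\le\sigma\|Xv\|_\infty$ pointwise and then estimating what remains in $L^\infty$ yields an error of order $\sigma\|v\|_X\|w\|_\infty$, which is typically \emph{much larger} than the target $\|v\|_X\,R(\sigma t)/t$ (for instance, with $\sigma$ of order $1$ and $t$ large, $\sigma\gg R(\sigma t)/t$). You cannot "combine" an $L^\infty$ bound on the deviation with the cancellation bound $R(\sigma t)$: once you take absolute values to pull out $\|Xv\|_\infty$, the cancellation in the $w$-integral is gone. The paper resolves this by a genuine integration by parts in $s$: the boundary term produces $\|v\|_\infty\tfrac{R(\sigma t)}{\sigma t}$, while the bulk term produces $\|v\|_X\tfrac{R(\sigma t)}{t}$ via the key inequality $\big\|X\big(\tfrac{\alpha v}{A_{\pm t}}\big)\big\|_\infty\ll\tfrac{\|v\|_X}{t}$ (a consequence of $A_{\pm t}\asymp t$ and $\|XA_{\pm t}\|_\infty\ll t$ from \Cref{lem:estimate_At}), applied against the partial sheared-arc integral bound $\big|\int_0^s w\circ\tc_{\pm t}\circ\geo_r\cdot A_{\pm t}\circ\geo_r\,\diff r\big|\ll\|w\|_{W^6}R(\sigma t)$. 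That integration by parts -- with the $A_{\pm t}/A_{\pm t}$ insertion that makes it land on the right weight -- is precisely the step your proposal omits, and without it the second term in the proposition does not come out.
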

\begin{proof}
    The proof follows the argument developed in \cite{FU}. Using the invariance of the measure $\vol$ by $\geo_t$, for any fixed $\sigma \in (0,1)$, we have
    \begin{equation}\label{eq:propFU_1}
        \int_M w\circ \tc_{\pm t} \cdot v \volal =  \int_M w\circ \tc_{\pm t} \cdot A_{\pm t} \cdot \frac{\alpha v}{A_{\pm t}} \, \vol = \frac{1}{\sigma} \int_0^{\sigma} \int_M w\circ \tc_{\pm t} \circ \geo_s \cdot A_{\pm t} \circ\geo_s \cdot \frac{\alpha v}{A_{\pm t}} \circ\geo_s \, \vol \, \diff s.
    \end{equation}
    An integration by parts yields
    \begin{equation*}
    \begin{split}
        \int_M w\circ \tc_{\pm t} \cdot v \volal = & \frac{1}{\sigma} \int_M \left( \int_0^{\sigma} w\circ \tc_{\pm t} \circ \geo_s \cdot A_{\pm t} \circ\geo_s \, \diff s\right) \cdot \frac{\alpha v}{A_{\pm t}} \circ\geo_{\sigma} \, \vol \\
        &-  \frac{1}{\sigma} \int_0^{\sigma} \int_M  \left( \int_0^{s} w\circ \tc_{\pm t} \circ \geo_r \cdot A_{\pm t} \circ\geo_r \, \diff r \right) \cdot X\Big(\frac{\alpha v}{A_{\pm t}}\Big) \circ\geo_{s} \, \vol \diff s.
    \end{split}
    \end{equation*}
    By \Cref{lem:estimate_At}, we note that 
    \[
        \left\| X\Big(\frac{\alpha v}{A_{\pm t}}\Big) \right\|_{\infty} \ll \|v\|_X \, \frac{\|A_{\pm t}\|_{\infty}}{t^2} + \|v\|_{\infty} \, \frac{\|A_{\pm t}\|_X}{t^2} \ll \frac{\|v\|_X}{t}. 
    \]
    We call $\gamma_s^x$ the geodesic segment $\gamma_s^x \colon r \mapsto \geo_r(x)$ for $r\in[0,s]$.
    Then, we can write
    \[
        \left\lvert \int_M w\circ \tc_{\pm t} \, v \volal \right\rvert \ll \sup_{x\in M} \Bigg[ \frac{\|v\|_{\infty}}{\sigma t} \left\lvert  \int_{\tc_{\pm t} \gamma_{\sigma}^x} w \, \widehat{\Ual} \right\rvert  + \frac{\|v\|_{X}}{t} \sup_{s \in [0,\sigma]} \left\lvert  \int_{\tc_{\pm t} \gamma_{s}^x} w \, \widehat{\Ual} \right\rvert \Bigg],
    \]
    where $\widehat{\Ual} = \alpha \widehat{U}$ is the 1-form dual to $\Ual$.
    The conclusion follows from the bound
    \begin{equation}\label{eq:sheared_geodesics}
        \left\lvert  \int_{\tc_{\pm t} \gamma_{s}^x} w \, \widehat{\Ual}  \right\rvert = \left\lvert  \int_{\tc_{\pm t} \gamma_{\sigma}^x} \alpha w \, \widehat{U}  \right\rvert \ll \|w\|_{W^6} \, R(st) \ll \|w\|_{W^6} \, R(\sigma t).
    \end{equation}
    see \cite[Lemma 17]{FU}.
\end{proof}

In \Cref{prop:improved_FU_holder}, we obtain a modest power decay of the correlations of $w$ and $v$, but which holds in weaker regularity; namely, the function $v$ is required to be only H\"{o}lder continuous in the geodesic direction.

\begin{proposition}\label{prop:improved_FU_holder}
Let $w\in W^6(M)$, with $\volal(w)=0$, and let $v \in L^{\infty}(M)$ be H\"{o}lder continuous in direction $X$ of exponent $\beta$. There exists $\beta_0 \in (0,1)$ such that, for every $t \geq 2$, we have 
\begin{equation*}
    \left\lvert \int_M w\circ \tc_{\pm t} \, v \volal \right\rvert \ll \|w\|_{W^6} \|v\|_{X,\beta}  t^{-\beta_0}.
\end{equation*}
\end{proposition}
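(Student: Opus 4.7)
The plan is to reduce to \Cref{prop:improved_FU} by first mollifying $v$ in the geodesic direction. Pick a smooth nonnegative bump $\phi$ with support contained in $[-1,1]$ and $\int \phi = 1$, and for $\epsilon \in (0,1)$ set $\phi_\epsilon(s) = \epsilon^{-1}\phi(s/\epsilon)$. Define
\[
v_\epsilon(x) := \int_\R \phi_\epsilon(s)\, v(\geo_s(x))\,\diff s.
\]
Differentiating under the integral gives $\|v_\epsilon\|_\infty \leq \|v\|_\infty$ and $\|Xv_\epsilon\|_\infty \leq \|\phi'_\epsilon\|_{L^1}\|v\|_\infty \ll \epsilon^{-1}\|v\|_\infty$, while the $X$-H\"older hypothesis on $v$ gives the key bound $\|v - v_\epsilon\|_\infty \ll \epsilon^\beta \|v\|_{X,\beta}$. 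In particular, $\|v_\epsilon\|_X \ll \epsilon^{-1}\|v\|_{X,\beta}$.

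Decompose the correlation as
\[
\int_M w\circ\tc_{\pm t}\, v \volal = \int_M w\circ\tc_{\pm t}\, v_\epsilon \volal + \int_M w\circ\tc_{\pm t}\,(v-v_\epsilon)\volal.
\]
The error term is bounded crudely by $\|w\|_\infty\|v-v_\epsilon\|_\infty \ll \|w\|_{W^6}\|v\|_{X,\beta}\,\epsilon^\beta$, using the Sobolev embedding $\|w\|_\infty \ll \|w\|_{W^6}$. For the main term, \Cref{prop:improved_FU} applied with a free parameter $\sigma \in (0,1)$ gives
\[
\left| \int_M w\circ \tc_{\pm t}\, v_\epsilon \volal \right| \ll \|w\|_{W^6}\|v\|_{X,\beta}\left( \frac{R(\sigma t)}{\sigma t} + \frac{1}{\epsilon}\cdot \frac{R(\sigma t)}{t}\right).
\]

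To conclude, I optimize in the pair $(\sigma,\epsilon)$. Writing $\sigma = t^{-a}$, $\epsilon = t^{-b}$ with $a,b > 0$ and using $R(t) \ll t^{(1+\nu_0)/2}$ (up to a logarithm when $\mu_0 = 1/4$), the three contributions have decay exponents, respectively, $(1-a)(1-\nu_0)/2$, $-b + a(1+\nu_0)/2 + (1-\nu_0)/2$, and $b\beta$. Since $(1-\nu_0)/2 + (1+\nu_0)/2 = 1$, the first two are equalized by $a = b$; balancing then with $b\beta$ gives $a = b = (1-\nu_0)/(2\beta + 1 - \nu_0)$, producing decay $t^{-\beta_0}$ with $\beta_0$ arbitrarily close to $\beta(1-\nu_0)/(2\beta+1-\nu_0) > 0$ from below (the small loss accommodates logarithmic factors in the borderline case). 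No substantive obstacle arises in this optimization: the decisive feature is the favourable power of $\sigma$ in the $Xv_\epsilon$-term of \Cref{prop:improved_FU}, which is exactly what allows $\epsilon$ to be taken small enough for the H\"older approximation error to be polynomially suppressed while keeping the main term under control.
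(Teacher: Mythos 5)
Your proof is correct, but it takes a genuinely different route from the paper's. The paper does not invoke \Cref{prop:improved_FU} as a black box: instead it returns to the identity \eqref{eq:propFU_1} and, crucially, \emph{skips} the integration by parts used in \Cref{prop:improved_FU} (the step that produces the $\|v\|_{X}$ term). In its place, the paper uses the $X$-H\"older hypothesis directly to replace $\frac{\alpha v}{A_{\pm t}}\circ \geo_s$ by $\frac{\alpha v}{A_{\pm t}}$ for $s\in[0,\sigma]$, with error of size $\sigma^{\beta}\|v\|_{X,\beta}$, and then applies the sheared-geodesic equidistribution bound \eqref{eq:sheared_geodesics}. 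Balancing at $\sigma = t^{-1/2}$ produces $\beta_0 = \min\{\beta/2,\,(1-\nu_0)/5\}$. Your argument instead mollifies $v$ along the $X$-flow to obtain $v_\epsilon$ with $\|v_\epsilon\|_X \ll \epsilon^{-1}\|v\|_{X,\beta}$ and $\|v - v_\epsilon\|_\infty \ll \epsilon^\beta\|v\|_{X,\beta}$, applies \Cref{prop:improved_FU} to $v_\epsilon$, and optimizes over the pair $(\sigma,\epsilon)$. The mollification bounds are all correct: $Xv_\epsilon(x) = -\int \phi_\epsilon'(s)\,v(\geo_s x)\diff s$ gives the $\epsilon^{-1}$ bound, and the approximation error uses precisely the $X$-H\"older hypothesis. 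Your exponent bookkeeping is right: equalizing the first two terms forces $a = b$, and balancing with $b\beta$ yields $\beta_0 = \beta(1-\nu_0)/(2\beta + 1 - \nu_0)$, positive since $\nu_0 < 1$; taking $\beta_0$ slightly smaller absorbs the logarithm when $\mu_0 = 1/4$. The resulting exponent is comparable to (sometimes larger, sometimes smaller than) the paper's, but both are valid since the statement only asserts existence of some $\beta_0 \in (0,1)$. What your modular route buys is conceptual simplicity (reusing \Cref{prop:improved_FU} as a black box); what the paper's route buys is directness and one fewer parameter to tune.
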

\begin{proof}
    For any $x\in M$ and for all $s \in [0,\sigma]$, \Cref{lem:estimate_At} yields
    \[
    \begin{split}
        \left\lvert \frac{(\alpha v)\circ\geo_s(x) }{A_{\pm t}\circ\geo_s(x)}  - \frac{(\alpha v)(x)}{A_{\pm t}(x)}\right\rvert &\ll \frac{\| (\alpha v) \circ \geo_s - (\alpha v)\|_{\infty} }{t} + \|v\|_{\infty} \frac{\| A_{\pm t} \circ \geo_s - A_{\pm t}\|_{\infty} }{t^2} \ll \frac{\sigma^{\beta}}{t} \|v\|_{X,\beta} + \frac{\sigma}{t} \|v\|_{\infty}\\
        &\ll \frac{\sigma^{\beta}}{t} \|v\|_{X,\beta}.
    \end{split}
    \]
    Hence, from \eqref{eq:propFU_1}, we obtain
    \[
        \left\lvert \int_M w\circ \tc_{\pm t} \, v \volal  -  \int_M \frac{1}{\sigma} \left(\int_0^{\sigma} w\circ \tc_{\pm t} \circ \geo_s \,  A_{\pm t} \circ\geo_s \,   \diff s\right)  \frac{\alpha v}{A_{\pm t}} \, \vol  \right\rvert \ll \sigma^{\beta} \|v\|_{X,\beta} \|w\|_{\infty}.
    \]
    By \eqref{eq:sheared_geodesics},
    \[
        \left\lvert \int_M w\circ \tc_{\pm t} \, v \volal  \right\rvert \ll (\sigma t)^{-\frac{1-\nu_0}{2}} \log (\sigma t) \|w\|_{W^6} \|v\|_{\infty}+\sigma^{\beta} \|v\|_{X,\beta} \|w\|_{\infty}.
    \]
    Choosing, for example, $\sigma = t^{-\frac{1}{2}}$ proves the result, with 
    $
        \beta_0 = \min \Big\{ \frac{\beta}{2}, \frac{1-\nu_0}{5} \Big\}.
    $
\end{proof}

\subsection{The key lemma}

The following lemma is the starting point of the mixing-via-shearing method: by the shearing properties of the flow on the geodesic vector field, see \Cref{eq:push-forward_X}, we can rewrite the correlations between two functions as an expression involving ergodic averages.

\begin{lemma}\label{lem:key_lemma}
Let $w,v \in L^{\infty}(M)$ be such that $Xw,Xv \in L^{\infty}(M)$. Then, for all $t \neq 0$, we have
\[
\begin{split}
    \int_M w\circ \tc_t \, v \volal = &\frac{1}{t}\int_M A^0_t \cdot w \circ \tc_t \cdot  v \volal  - \frac{1}{t} \int_M \Big( \int_0^{t} \frac{X(\alpha w)}{\alpha} \circ \tc_r \diff r \Big) \cdot v \volal \\
    &+ \frac{1}{t} \int_M \Big( \int_0^{t} w \circ \tc_r \diff r \Big) \cdot \Big[v - \frac{X(\alpha v)}{\alpha} \Big] \volal.
\end{split}
\]
\end{lemma}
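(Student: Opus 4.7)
The plan is to use the shearing identity \eqref{eq:push-forward_X} together with integration by parts against the Haar-induced volume $\vol$. Since $\vol$ is preserved by the geodesic flow $\{\geo_s\}$, one has $\int_M X\phi\, \vol = 0$ for any sufficiently regular $\phi$; the assumption $Xw, Xv\in L^{\infty}$ is enough to legitimise the integration by parts below via a standard density argument.

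First, I would write $\volal = \alpha\, \vol$ and, using the defining identity $A_t + A^0_t = t$, split
\[
    \int_M w\circ \tc_t \cdot v \volal = \frac{1}{t}\int_M A^0_t \cdot w\circ \tc_t \cdot v \volal + \frac{1}{t}\int_M A_t \cdot w\circ \tc_t \cdot \alpha v \, \vol.
\]
The first piece already matches the first term of the conclusion, so it remains to process the second. For that, I would apply the identity \eqref{eq:X_deriv_u} with $u=w$ to write
\[
    A_t \cdot w\circ \tc_t = X\Big(\int_0^t w \circ \tc_r \diff r\Big) + \int_0^t \Big(w - \frac{X\alpha}{\alpha}w - Xw\Big)\circ \tc_r \diff r,
\]
and observe the algebraic identity $w - \frac{X\alpha}{\alpha}w - Xw = w - \frac{X(\alpha w)}{\alpha}$, which already accounts for the second term of the statement.

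Finally, I would integrate the $X$-derivative by parts against $\alpha v$, obtaining
\[
    \int_M X\Big(\int_0^t w\circ \tc_r \diff r\Big)\cdot \alpha v \, \vol = -\int_M \Big(\int_0^t w\circ \tc_r \diff r\Big) X(\alpha v) \, \vol.
\]
Combining this $-X(\alpha v)$-term with the leftover $\int_0^t w\circ \tc_r \diff r \cdot \alpha v$ contribution produces the factor $\alpha v - X(\alpha v) = \alpha\,[v - X(\alpha v)/\alpha]$, which, together with $\alpha \vol = \volal$, yields exactly the third term of the claim, while the $\int_0^t \frac{X(\alpha w)}{\alpha}\circ \tc_r \diff r$ piece (with its minus sign) contributes the middle term. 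I do not expect any real obstacle: once \eqref{eq:push-forward_X}, its integrated form \eqref{eq:X_deriv_u}, and the $X$-invariance of $\vol$ are in hand, the lemma reduces to a bookkeeping exercise in integration by parts; the only delicate point is the regularity required for the manipulations, which is precisely the content of the hypotheses $w,v,Xw,Xv \in L^{\infty}(M)$.
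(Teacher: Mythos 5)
Your proof is correct and reaches the stated identity. The route is essentially a reorganization of the paper's argument: the paper begins from the integration by parts $\int_M w \circ \tc_r \cdot X(\alpha v)\,\vol = -\int_M X(w\circ \tc_r)\cdot \alpha v\,\vol$, expands $X(w\circ \tc_r)$ by the shearing identity \eqref{eq:push-forward_X}, integrates over $r\in[0,t]$, and only then integrates by parts in $r$ and substitutes $A_t = t - A^0_t$. You instead start from the clean decomposition $1 = (A^0_t + A_t)/t$, which isolates the first term of the conclusion immediately, and then invoke the pre-established identity \eqref{eq:X_deriv_u} to convert $A_t\cdot w\circ\tc_t$ into an $X$-derivative plus a correction; the integration by parts in $r$ is thereby absorbed into the quoted identity rather than re-derived inline. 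Both approaches use exactly the same two ingredients (shearing and integration by parts in $x$ against the $X$-invariant measure $\vol$, integration by parts in $r$), but your entry point — splitting $t = A_t + A^0_t$ upfront and then applying \eqref{eq:X_deriv_u} — makes the bookkeeping somewhat more transparent. The algebraic identity $w - \frac{X\alpha}{\alpha}w - Xw = w - \frac{X(\alpha w)}{\alpha}$ and the final recombination into $v - X(\alpha v)/\alpha$ are handled correctly, and the regularity hypotheses you cite suffice to justify the integrations by parts, as in the paper.
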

\begin{proof}
    By assumption and by measure-invariance, for any $r\in\R$, we have
    \[
        \int_M w \circ \tc_r \cdot X(\alpha v) \vol = -\int_M X(w \circ \tc_r) \cdot \alpha v \vol.
    \]
    Since $X(w \circ \tc_r) = D\tc_r(X)w \circ \tc_r$, by \eqref{eq:push-forward_X}, it follows that 
    \[
        \int_M w \circ \tc_r \cdot X(\alpha v) \vol = -\int_M [Xw \circ \tc_r +A_r \cdot \Ual w \circ \tc_r ]\cdot (\alpha v) \vol.
    \]
    Rearranging the terms and integrating from 0 to $t$, we get
    \[
        \int_0^t \int_M A_r \cdot \Ual w \circ \tc_r \cdot \alpha v \, \vol \diff r = -\int_0^t \int_M \left(  w \circ \tc_r \cdot X(\alpha v)  + Xw \circ \tc_r \cdot \alpha v \right)\vol \diff r.
    \]
    We integrate by parts the left-hand side, which can be rewritten as
    \[
    \begin{split}
        &\int_M \Big(\int_0^t  A_r \cdot \frac{\diff}{\diff r}( w \circ \tc_r) \diff r \Big)  \cdot \alpha v \vol = \int_M A_t \cdot w \circ \tc_t \cdot \alpha v \vol -  \int_M \Big( \int_0^t \frac{\diff}{\diff r} A_r \cdot w \circ \tc_r \diff r\Big) \cdot \alpha v \vol \\
        & \qquad = t \int_M w \circ \tc_t \cdot v \volal - \int_M A^0_t \cdot w \circ \tc_t \cdot \alpha v \vol - \int_M \int_0^t \Big( w - \frac{X\alpha}{\alpha}w \Big) \circ \tc_r \cdot \alpha v \, \diff r \, \vol, 
    \end{split}
    \]
    Therefore,
    \[
    \begin{split}
        t \int_M w \circ \tc_t \cdot v \volal = &\int_M A^0_t \cdot w \circ \tc_t \cdot \alpha v \vol + \int_M \int_0^t \Big( w - \frac{X\alpha}{\alpha}w - Xw \Big) \circ \tc_r \cdot \alpha v \, \diff r \, \vol \\
        &- \int_M \int_0^t  w \circ \tc_r \cdot X(\alpha v) \, \diff r \, \vol.
    \end{split}
    \]
    By \eqref{eq:integral_horo_tc}, we conclude
    \[
    \begin{split}
        t \int_M w \circ \tc_t \cdot v \volal
        = &\int_M A^0_t \cdot w \circ \tc_t \cdot \alpha v \vol + \int_M \Big( \int_0^{t} \Big( w - \frac{X(\alpha w)}{\alpha} \Big) \circ \tc_r \diff r \Big) \cdot \alpha v \vol \\
        &- \int_M \Big( \int_0^{t} w \circ \tc_r \diff r\Big) \cdot X(\alpha v) \vol,
    \end{split}
    \]
    which 
    completes the proof.
\end{proof}

From \Cref{lem:key_lemma} above, we deduce a first rough estimate on the decay of correlations. This is essentially \Cref{thm:FU_mixing} by Forni and Ulcigrai; however, in the case $\mu_0<1/4$, we can extract a precise expression for the higher order term.

\begin{corollary}\label{cor:alternative_FU}
Let $w\in W^7(M)$, with $\volal(w) =0$. 
If $\mu_0 \geq 1/4$, then for every $v \in L^{\infty}(M)$ for which $Xv \in L^{\infty}(M)$, we have
\[
    \Bigg\lvert \int_M w\circ \tc_{\pm t} \, v \volal \Bigg\rvert    \ll \|w\|_{W^7} \, \|v\|_X \frac{R(t)}{t}. 
\]
If $\mu_0 < 1/4$, there exist H\"{o}lder continuous functions $\cW_t^{\pm}, \widehat{\cW}_t^{\pm}$ of exponent $\beta = \frac{1-\nu_0}{2}$, satisfying $\|\cW_t^{\pm}\|_{\beta} + \|\widehat{\cW}_t^{\pm}\|_{\beta} \ll \|w\|_{W^6}$, for which the following holds.
For any $v \in L^{\infty}(M)$ for which $Xv \in L^{\infty}(M)$, and for any $t\geq 2$, we have
\[
\begin{split}
   \Bigg\lvert \int_M w\circ \tc_{\pm t} \, v \volal \Bigg\rvert \ll t^{\lambda_0-1} \Bigg[
   &\Bigg\lvert \int_M w \circ \tc_{\pm t} \cdot v \cdot {\cA}_t^{\pm} \circ \geo_{\log t} \volal \Bigg\rvert  + \Bigg\lvert \int_M \widehat{\cW}_t^{\pm} \circ \geo_{\log t}  \cdot v \volal \Bigg\rvert \\
   &+ \Bigg\lvert \int_M \cW_t^{\pm} \circ \geo_{\log t}  \cdot \Big(v- \frac{X(\alpha v)}{\alpha} \Big) \volal \Bigg\rvert \Bigg] + \|w\|_{W^7} \, \|v\|_X t^{\lambda_0-1 -\delta_0}.
\end{split}
\]
\end{corollary}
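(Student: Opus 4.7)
The plan is to start from the three-term decomposition provided by \Cref{lem:key_lemma}. Writing that identity as
\[
\int_M w \circ \tc_{\pm t} \, v \volal = \frac{1}{t}\,\mathrm{I}_1 + \frac{1}{t}\,\mathrm{I}_2 + \frac{1}{t}\,\mathrm{I}_3,
\]
with $\mathrm{I}_1 = \int_M A^0_{\pm t} \cdot w \circ \tc_{\pm t} \cdot v \volal$ and $\mathrm{I}_2, \mathrm{I}_3$ the two terms involving ergodic integrals of $X(\alpha w)/\alpha$ and of $w$ respectively, the idea is to insert into each piece the appropriate asymptotic expansion. In order to apply \Cref{lem:Holder_W} to $u = X(\alpha w)/\alpha$ in $\mathrm{I}_2$, one needs $\volal(u) = \int_M X(\alpha w)\vol = 0$, which follows because $\vol$ is locally Haar and hence invariant under the geodesic flow; note also that $\|u\|_{W^6} \ll \|w\|_{W^7}$ and that the observable $v - X(\alpha v)/\alpha$ appearing in $\mathrm{I}_3$ is bounded in sup-norm by a multiple of $\|v\|_X$.

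In the case $\mu_0 \geq 1/4$, the first claim reduces to trivial sup-norm estimates. By \eqref{eq:bound_on_A0t}, $\|A^0_{\pm t}\|_\infty \ll R(t)$, and by the first part of \Cref{lem:Holder_W}, the inner ergodic integrals in $\mathrm{I}_2$ and $\mathrm{I}_3$ are uniformly bounded by $\|w\|_{W^7}\, R(t)$ and $\|w\|_{W^6}\, R(t)$, respectively. Multiplying by $\|v\|_X$ (which dominates $\|v\|_\infty$) and dividing by $t$ yields $\|w\|_{W^7}\|v\|_X R(t)/t$.

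For $\mu_0 < 1/4$, the plan is to substitute the refined expansions. From \eqref{eq:bound_on_A0t}, $A^0_{\pm t} = t^{\lambda_0} \cA_t^\pm \circ \geo_{\log t} + O(t^{\lambda_0 - \delta_0})$ in sup-norm, so $\mathrm{I}_1$ becomes $t^{\lambda_0}$ times the first main term in the corollary's statement, up to an error of order $t^{\lambda_0 - \delta_0}\|w\|_\infty \|v\|_\infty$. Applying the second part of \Cref{lem:Holder_W} to $u = X(\alpha w)/\alpha$ produces Hölder functions $\widehat{\cW}_t^\pm$ of exponent $\beta = (1-\nu_0)/2$ with $\|\widehat{\cW}_t^\pm\|_\beta \ll \|w\|_{W^7}$, which turn $\mathrm{I}_2$ into $-t^{\lambda_0}$ times the second main term, plus an error of order $t^{\lambda_0 - \delta_0}\|w\|_{W^7}\|v\|_\infty$. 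Similarly, applying the lemma to $w$ itself yields $\cW_t^\pm$ with $\|\cW_t^\pm\|_\beta \ll \|w\|_{W^6}$, turning $\mathrm{I}_3$ into $t^{\lambda_0}$ times the third main term, plus an error of order $t^{\lambda_0 - \delta_0}\|w\|_{W^6}\|v\|_X$. Dividing by $t$, taking absolute values, and applying the triangle inequality collects the three main terms with prefactor $t^{\lambda_0 - 1}$ and absorbs all error terms into $\|w\|_{W^7}\|v\|_X t^{\lambda_0 - 1 - \delta_0}$.

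The argument is largely a bookkeeping exercise, so there is no deep analytical obstacle. The only subtleties I foresee are verifying the zero-$\volal$-average hypothesis for $X(\alpha w)/\alpha$ via geodesic-flow invariance of $\vol$, and carefully tracking the Sobolev order on $w$ — specifically, that the extra $X$-derivative incurred by passing to $X(\alpha w)/\alpha$ is exactly what produces the $\|w\|_{W^7}$ (rather than $\|w\|_{W^6}$) dependence in the final estimate.
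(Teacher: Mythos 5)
Your proposal is correct and follows essentially the same route as the paper: start from \Cref{lem:key_lemma}, bound $\mathrm{I}_1,\mathrm{I}_2,\mathrm{I}_3$ via \eqref{eq:bound_on_A0t} and \Cref{lem:Holder_W} (for the inner ergodic integrals of $X(\alpha w)/\alpha$ and $w$), and collect the error terms. Minor remark: your observation that $\|\widehat{\cW}_t^\pm\|_\beta \ll \|w\|_{W^7}$ is indeed what \Cref{lem:Holder_W} applied to $X(\alpha w)/\alpha$ gives, even though the corollary as stated writes $\|w\|_{W^6}$ — that looks like a small typo in the paper rather than a gap in your argument, and you were also right to note that $\volal(X(\alpha w)/\alpha) = \int_M X(\alpha w)\,\vol = 0$ by geodesic-invariance of $\vol$, a hypothesis the paper uses implicitly.
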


\begin{proof}
    We divide the proof in two cases, depending on the value of $\mu_0$.
    
    \noindent \textbf{Case 1.}
    Let us first assume $\mu_0 \geq 1/4$. By \Cref{lem:key_lemma}, \Cref{lem:Holder_W}, and \eqref{eq:bound_on_A0t}, we have
    \[
    \begin{split}
        \Bigg\lvert \int_M w\circ \tc_{\pm t} \, v \volal \Bigg\rvert   & \ll \frac{1}{t}  \|A_t^0\|_{\infty} \, \|w\|_{\infty}\, \|v\|_{\infty} + \frac{R(t)}{t} \big( \|X(\alpha w)\|_{W^6} \|v\|_{\infty} + \|w\|_{W^6} \|v\|_{X} \big)  \\
        & \ll \frac{R(t)}{t}\|w\|_{W^7} \|v\|_{X},
    \end{split}
    \]
    which proves our claim.

    \medskip

    \noindent \textbf{Case 2.}
    Assume now $\mu_0 < 1/4$. 
    By \Cref{lem:Holder_W}, there exist H\"{o}lder functions $\cW_t^{\pm}$ and $\widehat{\cW}_t^{\pm}$ for which we have
    \[
    \begin{split}
        &\Bigg\lvert \int_0^{\pm t} w \circ \tc_r(x) \diff r -  t^{\lambda_0} \cW_t^{\pm} \circ \geo_{\log t}  \Bigg\rvert \ll t^{\lambda_0 - \delta_0} \, \|w\|_{W^6},\\
        &\Bigg\lvert \int_0^{\pm t} \frac{X(\alpha w )}{\alpha} \circ \tc_r(x) \diff r -  t^{\lambda_0} \widehat{\cW}_t^{\pm} \circ \geo_{\log t}  \Bigg\rvert \ll t^{\lambda_0 - \delta_0} \, \|w\|_{W^7}.
    \end{split}
    \]
    We substitute in the statement of \Cref{lem:key_lemma} and we get
    \begin{multline*}
        \Bigg\lvert \int_M w\circ \tc_{\pm t} \, v \volal - \frac{1}{t} \int_M A^0_{\pm t} \cdot w \circ \tc_t \cdot v \volal \Bigg\rvert \ll  t^{\lambda_0-1} \Bigg\lvert \int_M \widehat{\cW}_t^{\pm} \circ \geo_{\log t}  \cdot v \volal \Bigg\rvert \\
        +  t^{\lambda_0-1} \Bigg\lvert \int_M \cW_t^{\pm} \circ \geo_{\log t}  \cdot \Big(v- \frac{X(\alpha v)}{\alpha} \Big) \volal \Bigg\rvert + \|w\|_{W^7} \, \|v\|_X t^{\lambda_0-1 -\delta_0}.
    \end{multline*}
    Again, by  \eqref{eq:bound_on_A0t}, we have
    \[
        \Bigg\lvert\int_M A^0_{\pm t} \cdot w \circ \tc_{\pm t} \cdot v \volal - \int_M t^{\lambda_0} \cA_t^{\pm} \circ \geo_{\log t} \cdot w \circ \tc_{\pm t} \cdot v \volal \Bigg\rvert \ll t^{\lambda_0 - \delta_0} \, \|w\|_{\infty}\, \|v\|_{\infty},
    \]
    and hence the proof is complete.
\end{proof}

We also state another corollary of \Cref{lem:key_lemma} which we will use at the end of the proof of \Cref{thm:main}. 

\begin{corollary}\label{cor:Ratner}
Let $f,\ell \in W^7(M)$. For any $t\geq 2$, we have
\[
t\int_M f\circ \horo_t \, \ell \vol =  \int_0^{t} \int_M   \left[ (f - Xf ) \circ \horo_r \cdot  \ell - f \circ \horo_r  \cdot X\ell \right] \vol \diff r.
\]
Moreover, assume that $\mu_0<1/4$. There exists a bilinear form $\cC$ on $W^7(M)$ such that, for every $t\geq 2$, 
\[
\Bigg\lvert \int_0^{t} \int_M \left[ (f - Xf ) \circ \horo_r \cdot  \ell - f \circ \horo_r  \cdot X\ell \right] \vol \diff r - t^{\nu_0} \cC(f,\ell) \Bigg\rvert    \ll \|f\|_{W^7} \, \|\ell\|_{W^7}  t^{\nu_0 -\delta_0}. 
\]
\end{corollary}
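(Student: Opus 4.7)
I would apply \Cref{lem:key_lemma} with $\alpha\equiv 1$, so that $\tc_t=\horo_t$, $\volal=\vol$, the distortion $A^0_t$ vanishes identically (since $X\alpha=0$), and $X(\alpha\cdot)/\alpha = X$. The three terms of the lemma then collapse to
\[
t\int_M f\circ\horo_t\cdot\ell\vol = -\int_0^t\int_M Xf\circ\horo_r\cdot\ell\vol\diff r + \int_0^t\int_M f\circ\horo_r\cdot(\ell-X\ell)\vol\diff r,
\]
which after combining integrands is exactly the claimed identity.

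\textbf{Reduction of the asymptotic.} Writing $C(t):=\int_M f\circ\horo_t\cdot\ell\vol$, the first identity shows that the integral in the second statement equals $t\cdot C(t)$, so the claim reduces to extracting a precise asymptotic $C(t)=t^{\nu_0-1}\cC(f,\ell)+O(t^{\nu_0-1-\delta_0})$ for the classical horocycle correlation. I would swap the order of integration on the right-hand side, apply \Cref{thm:Rav} with $j=1$ to each inner horocycle ergodic integral (of $f-Xf$ and of $f$), and use the commutation of $X$ with $\geo_s$ to integrate by parts in $X$, moving the $X$-derivative from $\ell$ onto the expansion coefficients (differentiability justified by \Cref{thm:Rav}(c)). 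The result is
\[
\sum_{\substack{\mu\in\Spec_{\geq 0}\\ \bullet\in\{+,-\}}} t^{\lambda_\bullet(\mu)}\int_M\Psi_{\mu,\bullet}(\geo_{\log t}(x),t)\,\ell(x)\vol + O(\|f\|_{W^7}\|\ell\|_{W^7}),
\]
with $\Psi_{\mu,\bullet}:=\cD^+_{\lambda_\bullet}(f-Xf)+X\cD^+_{\lambda_\bullet}(f)$ bounded and H\"{o}lder continuous in the $U$-direction of exponent $\beta<(1-\nu_0)/2$, by \Cref{thm:Rav}(c)--(d). Only $\mu=\mu_0$ reaches the top exponent $t^{\lambda_+(\mu_0)}=t^{(1+\nu_0)/2}$: the other Casimir components have strictly smaller exponents ($\lambda_+(\mu)<\lambda_+(\mu_0)$ for $\mu\in(\mu_0,1/4)$, and $\re\lambda_\pm(\mu)\leq\frac{1}{2}<\frac{1+\nu_0}{2}$ for $\mu\geq 1/4$), yielding the spectral gap $\delta_0$.

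\textbf{Extraction and main obstacle.} For the dominant $\mu=\mu_0$ piece, I would change variable $y=\geo_{\log t}(x)$ via the $\geo$-invariance of $\vol$ to write the contribution as $t^{(1+\nu_0)/2}\int_M\Psi_{\mu_0,\bullet}(y,t)\,\ell\circ\geo_{-\log t}(y)\vol(y)$. Since $\Psi_{\mu_0,\bullet}$ lives on the $\mu_0$-irreducible component of $L^2(M)$ (because $X$ and $V$ preserve Casimir eigenspaces and the formulas for $\cP_{\lambda_\bullet}$, $\cQ_{\lambda_\bullet}$ only involve $\geo_{-\xi}$-pullbacks of such objects), this is essentially a geodesic-flow matrix coefficient in the complementary series of parameter $\nu_0$; it admits a precise asymptotic of the form $t^{-(1-\nu_0)/2}\psi_\bullet(f,\ell)+O(t^{-(1-\nu_0)/2-\delta_0})$, and multiplying by the $t^{(1+\nu_0)/2}$ prefactor yields the claimed $t^{\nu_0}\cC(f,\ell)$ with $\cC(f,\ell):=\sum_\bullet\psi_\bullet(f,\ell)$. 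The principal technical obstacle is obtaining this second precise asymptotic (not merely an upper bound) for the geodesic correlation of a function that is only H\"{o}lder in the $U$-direction and depends weakly on the auxiliary parameter $t$; the natural route is a representation-theoretic analysis on the $\mu_0$-component using explicit matrix-coefficient formulas for complementary series, with uniform control to absorb the weak $t$-dependence (via \Cref{thm:Rav}(b)) and all remainders into the single error $t^{\nu_0-\delta_0}$.
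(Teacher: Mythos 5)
Your derivation of the first identity is correct and matches the paper: apply \Cref{lem:key_lemma} with $\alpha\equiv 1$, so $A^0_t\equiv 0$ and $\frac{X(\alpha\,\cdot\,)}{\alpha}=X$, and collect terms. Your reduction of the second statement is also the same as the paper's: writing $C(t)=\int_M f\circ\horo_t\,\ell\,\vol$, the first identity gives $tC(t)$ on the right-hand side, so the claim becomes the assertion that
\[
\Bigl\lvert C(t) - t^{-1+\nu_0}\,\cC(f,\ell) \Bigr\rvert \ll \|f\|_{W^7}\|\ell\|_{W^7}\,t^{-1+\nu_0-\delta_0},
\]
i.e.\ a precise polynomial asymptotic for the classical horocycle correlation when $\mu_0<1/4$.

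Where you diverge is how this asymptotic is established. The paper simply invokes it as a known result, citing \cite[Corollary 2.5]{FlaRav}. You instead propose to re-derive it from \Cref{thm:Rav}: swap the order of integration, expand the inner horocycle integrals, integrate by parts in $X$, isolate the dominant $\mu_0$ contribution, and then obtain a precise asymptotic (not merely an upper bound) for the resulting geodesic-flow matrix coefficient of a function that is only H\"older in the $U$-direction and depends weakly on $t$. That last step is exactly where your sketch stops: you flag it yourself as the ``principal technical obstacle'' and only gesture at a representation-theoretic analysis of the $\mu_0$-component without carrying it out. This is a genuine gap. The outline is plausible and, if pushed through, would amount to re-proving the cited result of \cite{FlaRav} in a weaker regularity class, but as written the proposal does not complete the argument, whereas the paper's proof is complete precisely because it quotes an existing precise asymptotic rather than deriving one. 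If you want to keep your self-contained route, you would need to actually produce the asymptotic for the geodesic correlation $\int_M \Psi_{\mu_0,\bullet}(y,t)\,\ell\circ\geo_{-\log t}(y)\,\vol(y)$ with the stated uniform error; simply asserting that a representation-theoretic analysis ``should'' give it is not a proof.
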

\begin{proof}
%
The first claim follows immediately from \Cref{lem:key_lemma} applied to the trivial case $\alpha \equiv 1$, hence the second claim translates to the statement on the asymptotics of the correlations for the horocycle flow
\[
\Bigg\lvert \int_M f\circ \horo_t \, \ell \vol - t^{-1+\nu_0} \cC(f,\ell) \Bigg\rvert    \ll \|f\|_{W^7} \, \|\ell\|_{W^7} \,  t^{-1+ \nu_0 -\delta_0}. 
\]
which  can be found, e.g., in \cite[Corollary 2.5]{FlaRav}. 
\end{proof}

\subsection{Proof of \Cref{thm:main}}

We conclude this section by proving \Cref{thm:main}. We will use the following two results, which are the most technical steps.
\begin{proposition}\label{prop:correlations_with_integral}
Let $f,\ell \in W^7(M)$ with $\volal(f) = \volal(\ell)=0$. For any $t\geq 2$, we have
\begin{equation*}
   \Bigg\lvert \int_M A_t^0 \cdot f\circ \tc_t \cdot \ell \volal \Bigg\rvert 
   \ll 
   \begin{cases}
       \xnal \, \|f\|_{W^7} \, \|\ell\|_{W^7} \frac{R(t)^2}{t}, & \text{ if } \mu_0 \geq 1/4,\\
       \xnal \, \|f\|_{W^7} \, \|\ell\|_{W^7} \,t^{\nu_0-\delta_0}, & \text{ if } \mu_0 <1/4.
   \end{cases}
\end{equation*}
\end{proposition}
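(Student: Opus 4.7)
The plan is to substitute an asymptotic expansion for $A_t^0$ and then apply the refined mixing-via-shearing estimates proved earlier in this section. Since $A_t^0(x)=\int_0^t (X\alpha/\alpha)\circ\tc_r(x)\diff r$ is, by \eqref{eq:change_integrals}, the horocycle ergodic integral of $X\alpha$ evaluated at time $\tau(x,t)$, Theorem~\ref{thm:Rav} together with Lemma~\ref{lem:tau_beta_minus_t_beta} yields a decomposition
\[
    A_t^0(x)=\sum_{\mu,\bullet} t^{\lambda_\bullet}\,\Phi_{\mu,\bullet}(\geo_{\log t}(x),t)+\mathrm{Err}(x,t),
\]
in which the series converges in $\|\cdot\|_{X,1}$-norm (by Theorem~\ref{thm:Rav}(c) applied to $X\alpha\in W^6(M)$ with $j=1$) and $\mathrm{Err}$ is uniformly bounded. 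The key structural remark is that the $X$-vector field is invariant under its own flow, so $X(\Phi\circ\geo_{\log t})=(X\Phi)\circ\geo_{\log t}$; consequently, any twisted observable of the form $v=\ell\cdot\Phi\circ\geo_{\log t}$ satisfies $\|v\|_X\ll\|\ell\|_X\|\Phi\|_{X,1}$ \emph{uniformly} in $t$.

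In the case $\mu_0\ge 1/4$, for each leading term $t^{\lambda_\bullet}\Phi_{\mu,\bullet}\circ\geo_{\log t}$ I apply Proposition~\ref{prop:improved_FU} with $w=f$, the twisted $v$ above, and $\sigma=1$; this yields a correlation bound of order $\|f\|_{W^6}\|\ell\|_X\|\Phi_{\mu,\bullet}\|_{X,1}R(t)/t$. Multiplying by $|t^{\lambda_\bullet}|\le R(t)$ and summing over $\mu,\bullet$ via the series convergence gives the desired $R(t)^2/t$ estimate. The remainder $\mathrm{Err}$ is handled in the same way, using the same Sobolev bookkeeping to control its $X$-derivative.

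In the case $\mu_0<1/4$, I use \eqref{eq:bound_on_A0t} to isolate the dominant term and reduce to bounding $t^{\lambda_0}\int_M f\circ\tc_t\cdot\ell\cdot(\cA_t^\pm\circ\geo_{\log t})\volal$ by $t^{\nu_0-\delta_0}$, which amounts to saving an extra $t^{-\delta_0}$ over the Forni--Ulcigrai rate $t^{-\lambda_-}$. To produce this saving, I apply Corollary~\ref{cor:alternative_FU} with $w=f$ and $v=\ell\cdot\cA_t^\pm\circ\geo_{\log t}$: the corollary's built-in error term contributes exactly $\|\ell\|_X\,t^{\lambda_0-1-\delta_0}$, which matches the target once multiplied by $t^{\lambda_0}$. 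The three residual correlation terms of the corollary, which feature products of $\geo_{\log t}$-twisted H\"older cocycles ($\cA_t^\pm$ paired with $\cW_t^\pm$ or $\widehat\cW_t^\pm$), I estimate via Proposition~\ref{prop:improved_FU_holder}, gaining a further $t^{-\beta_0}$ from the H\"older regularity in the $X$- and $U$-directions granted by Theorem~\ref{thm:Rav}(c)--(d). Subdominant contributions to $A_t^0$ coming from $\mu>\mu_0$ are peeled off iteratively with the same scheme until the residual exponent falls below $\nu_0-\delta_0$.

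The main obstacle is in the second case: controlling the residual correlations that involve \emph{products} of two $\geo_{\log t}$-twisted H\"older cocycles while retaining uniformity in $t$. The structural facts that let the iteration close up are exactly the $X$-invariance of $\geo_t$ and the H\"older regularity assertions of Theorem~\ref{thm:Rav}(c)--(d); a secondary difficulty is the bookkeeping of the exponents $\lambda_\bullet(\mu)$ across the countable Casimir spectrum, which is handled by the summability statements in Theorem~\ref{thm:Rav}(c).
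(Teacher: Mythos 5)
Your high-level strategy — expand $A_t^0$ via Theorem~\ref{thm:Rav}, then feed the twisted coefficients through the mixing estimates of Section~3 — is the right one, and it is indeed what the paper does (it deduces Proposition~\ref{prop:correlations_with_integral} from Theorem~\ref{thm:main_2} with $u=\frac{X\alpha}{\alpha}$). However, there are two concrete gaps in the execution.

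First, the decomposition ``$A_t^0=\sum_{\mu,\bullet} t^{\lambda_\bullet}\Phi_{\mu,\bullet}\circ\geo_{\log t}+\mathrm{Err}$ with $\Phi_{\mu,\bullet}$ controlled in $\|\cdot\|_{X,1}$'' is not available. The coefficients in Theorem~\ref{thm:Rav} come in two pieces, $\cP_{\lambda_\bullet}$ evaluated at $\geo_{\log T}(x)$ and $\cQ_{\lambda_\bullet}$ evaluated at $\horo_1\circ\geo_{\log T}(x)$ (equivalently, for the time-change, at $\geo_{\log\tau(x,t)}\circ\tc_t(x)$). While $\cP$ and $\cQ$ are each $X$-differentiable by Theorem~\ref{thm:Rav}(c), the composite $\cQ\circ\horo_1$ is \emph{not}: $X(\cQ\circ\horo_1)=(X\cQ+U\cQ)\circ\horo_1$, and $U\cQ$ exists only in a H\"older sense by Theorem~\ref{thm:Rav}(d). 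So $\cD_{\lambda_\bullet}$ is not $X$-differentiable, your $\|\Phi_{\mu,\bullet}\|_{X,1}$ is infinite, and the summability claim via~(c) does not apply. The paper avoids this by keeping $\cP$ and $\cQ$ separate, defining $P_{\lambda_\bullet}$ and $Q_{\lambda_\bullet}$ as functions of $x$ (not of $\tc_t(x)$), and then using the measure-preserving substitution $x\mapsto\tc_{-t}(x)$ to convert the $\cQ$-contribution into a correlation of $\ell\circ\tc_{-t}$ against $f\cdot Q_{\lambda_\bullet}$, swapping the roles of $f$ and $\ell$; this is exactly the asymmetry in Forni--Ulcigrai's bound that the paper flags as crucial, and it is missing from your argument.

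Second, in the case $\mu_0<1/4$ the reduction through \eqref{eq:bound_on_A0t} is too lossy. That bound gives $\|A^0_{\pm t}-t^{\lambda_0}\cA_t^\pm\circ\geo_{\log t}\|_\infty\ll t^{\lambda_0-\delta_0}$, so the error you discard contributes trivially $O(t^{\lambda_0-\delta_0})$ to the correlation. Since $\lambda_0-\nu_0=\frac{1-\nu_0}{2}$, and the $\delta_0$ produced by \eqref{eq:bound_on_A0t} (coming from the H\"older exponent of $\cA_t^\pm$ and from Lemma~\ref{lem:tau_beta_minus_t_beta}) is strictly smaller than $\frac{1-\nu_0}{2}$, this error exceeds the target $t^{\nu_0-\delta_0}$. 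The paper's order of operations is essential here: it keeps $\tau(x,t)^{\lambda_0}$ and $\geo_{\log\tau(x,t)}$ inside $P_{\lambda_0},Q_{\lambda_0}$, applies Corollary~\ref{cor:alternative_FU} first (gaining a factor $t^{\lambda_0-1}$), and only afterwards replaces $\tau$ by $t$ via Lemma~\ref{lem:first_one}, so that the $t^{\lambda_0-\delta_0}$ replacement error appears already multiplied by $t^{\lambda_0-1}$ and lands at $t^{2\lambda_0-1-\delta_0}=t^{\nu_0-\delta_0}$. Your ``peel off then bound'' ordering cannot reproduce this.
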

\Cref{prop:correlations_with_integral} above is an immediate consequence of \Cref{thm:main_2} with $u=\frac{X\alpha}{\alpha}$, which is proved in the next section. 
The second result that we need, whose proof is also postponed to the next section, is the following.
Fix $\theta = \nal \, t^{1-\delta_0} \geq 2 \|\tau(\cdot,t) - t\|_{\infty}$, and let us define $t_0 = t_0(t)$ by
\[
    t_0 := t-\theta.
\]
Clearly, by \eqref{eq:t_tau_bound}, we have $|\tau(x,t) - t_0| \ll \theta + \|\tau(\cdot,t) - t\|_{\infty} \ll \theta$ as well as $|\tau(x,t) - t_0| \gg \theta -\|\tau(\cdot,t) - t\|_{\infty}\gg \theta$ , for all $x \in M$.

\begin{proposition}\label{prop:correlations_with_small_integral}
Let $w,v \in W^6(M)$ with $\vol(w) = \vol(v)=0$.
We have
\[
    \left\lvert \int_M \int_{t_0}^{\tau(x,t)} w \circ \horo_r(x) \cdot v(x) \diff r \vol \right\rvert \ll \begin{cases}
       \|w\|_{W^6}\, \|v\|_{W^6}\, \theta^{\nu_0}, & \text{ if } \mu_0 \neq 1/4,\\
       \|w\|_{W^6}\, \|v\|_{W^6}\, \theta^{\nu_0} \, (\log t)^2, & \text{ if } \mu_0 = 1/4.
   \end{cases}
\]
Furthermore, if $\mu_0 > 1/4$ and if $w$ is supported only on the principal series, we have
\[
    \left\lvert \int_M \int_{t_0}^{\tau(x,t)} w \circ \horo_r(x) \cdot v(x) \diff r \vol \right\rvert \ll 
       \nal^{\frac{1}{2}}\, \|w\|_{W^6}\, \|v\|_{W^6}.
\]
\end{proposition}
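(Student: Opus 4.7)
The plan is to split the inner integral as $\int_{t_0}^{\tau(x,t)} w\circ\horo_r(x)\,dr = \int_{t_0}^t w\circ\horo_r(x)\,dr + \int_t^{\tau(x,t)} w\circ\horo_r(x)\,dr$, isolating an $x$-independent ``deterministic'' piece from the ``stochastic'' piece that carries all the $\tau(x,t)$-dependence. For the first piece, Fubini converts the integral against $v$ into $\int_{t_0}^t \int_M w\circ\horo_r \cdot v\,\vol\,dr$, and Ratner's polynomial decay of horocycle correlations (as in \Cref{cor:Ratner}) gives a bound $\ll \int_{t_0}^t r^{-1+\nu_0}\,dr$ (with the appropriate logarithmic corrections at $\mu_0 = 1/4$ and $\log(t/t_0) \ll 1$ at $\mu_0 > 1/4$). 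Since $t - t_0 = \theta \ll t$, this is $\ll \theta\, t^{\nu_0 - 1} \ll \theta^{\nu_0}$, as claimed.

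For the stochastic piece $Q := \int_M[I_{\tau(x,t)}(x) - I_t(x)]v(x)\,\vol$ with $I_T(x) := \int_0^T w\circ\horo_r(x)\,dr$, I expand $I_T = G(\cdot,T) + O(\|w\|_{W^6})$ via \Cref{thm:Rav}, where $G(x,T) := \sum_{\mu,\bullet} T^{\lambda_\bullet}\cD_{\lambda_\bullet}^+ w(\geo_{\log T}x,T)$. A Taylor expansion in $T$ around $T = t$, using $\partial_T[T^{\lambda_\bullet}\cD(\geo_{\log T}x,T)] = T^{\lambda_\bullet - 1}(\lambda_\bullet\cD + X\cD)(\geo_{\log T}x,T) + O(\|w\|_{W^6}/T)$ from \Cref{thm:Rav}(b),(c) and the bound $|\tau(x,t) - t|\ll \nal R(t) \ll \theta$, produces
\[
    G(x,\tau(x,t)) - G(x,t) = a(x) \sum_{\mu,\bullet} t^{\lambda_\bullet - 1}\psi_\bullet(\geo_{\log t}x,t) + \text{(errors)},
\]
where $a(x) := \tau(x,t) - t$ and $\psi_\bullet := \lambda_\bullet\cD_{\lambda_\bullet}^+ w + X\cD_{\lambda_\bullet}^+ w$ satisfies $\|\psi_\bullet\|_\infty \ll \|w\|_{W^6}$. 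The crucial observation is that $a(x) = \int_0^t g\circ\tc_r(x)\,dr$ is the ergodic integral along the time-change of the zero-mean function $g := 1/\alpha - 1$, so bounding $Q$ reduces to estimating correlations of the form $\int_M a(x) h(x)\,\vol$ for smooth $h$.

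For $\mu_0 \geq 1/4$, Fubini together with \Cref{thm:FU_mixing} applied to the $\tc$-correlations of $g$ yields a bound $\ll R(r)/r$; integrating over $r\in[0,t]$ gives $\ll R(t)$, and the prefactor $|t^{\lambda_\bullet - 1}| = t^{-1/2}$ closes off to $O(1)$ (with $O((\log t)^2)$ at $\mu_0 = 1/4$). For $\mu_0 < 1/4$ the direct FU bound is lossy by a factor $t^{\nu_0\delta_0}$, so I instead invoke the refined asymptotic $a(x) = t^{\lambda_0}\cW_t^+(\geo_{\log t}x) + O(t^{\lambda_0 - \delta_0})$ from \Cref{lem:Holder_W} applied to $g$. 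The dominant $\lambda_\bullet = \lambda_0$ main term then reduces to $t^{\nu_0}\int_M [\cW_t^+\psi_{\lambda_0}](y)\,v(\geo_{-\log t}y)\,\vol$ (using $\geo$-invariance of $\vol$), and exponential mixing of the geodesic flow on the compact hyperbolic surface $M$ (for Hölder observables, using $\vol(v) = 0$) saves an additional factor $t^{-c}$ for some $c>0$ depending on the spectral gap; choosing $\delta_0$ small enough in terms of $c$ yields $t^{\nu_0 - c} \ll \theta^{\nu_0}$.

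The main obstacle will be the careful error accounting in the Taylor expansion of $G(x,\tau(x,t)) - G(x,t)$, balancing the $T$-derivative bound of \Cref{thm:Rav}(b), the $X$-shift from $\geo_{\log T}$ to $\geo_{\log t}$ controlled by (c), and summability over the Casimir spectrum, so that each residual error is absorbed either in the harmless $O(\|w\|_{W^6})$ term or in $\theta^{\nu_0}$. The principal-series refinement uses \Cref{rmk:principal_series} to drop the non-principal contributions in the expansion and combines this with a Cauchy-Schwarz estimate $\|a\|_{L^2(\volal)} \ll \nal^{1/2}\,t^{1/2}$ (from the classical $L^2$-bound for horocycle ergodic integrals applied to $g$) to extract the $\nal^{1/2}$ improvement over the generic $\nal$ bound.
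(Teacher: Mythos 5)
Your decomposition is genuinely different from the paper's: you split the inner integral at the deterministic time $t$ (isolating a deterministic piece $\int_{t_0}^t$ and a fluctuation piece $\int_t^{\tau(x,t)}$ carrying all the $\tau$-dependence), whereas the paper rewrites $\int_{t_0}^{\tau(x,t)} w\circ\horo_r(x)\,\diff r = \int_0^{\Delta(x)} w\circ\horo_r(\horo_{t_0}x)\,\diff r$ with $\Delta(x)=\tau(x,t)-t_0\asymp\theta$, i.e.\ it re-anchors the ergodic integral at $\horo_{t_0}x$ so that \Cref{thm:Rav} is applied at time-scale $\theta$ rather than $t$. That re-anchoring immediately delivers the $\theta^{\Re\lambda_\bullet}$ prefactor with only one $X$-derivative of $\cP_{\lambda_\bullet}w,\cQ_{\lambda_\bullet}w$ (exactly what $W^6$ provides via \Cref{thm:Rav}(c) with $j=1$), and the remaining factor $\theta^{(\nu_0-1)/2}$ is extracted by the $\sigma$-optimization in \Cref{prop:improved_FU} with $\sigma=\theta/(\nal t)$.

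Your approach has two concrete gaps. First, the linearization $G(x,\tau(x,t))-G(x,t)\approx a(x)\,\partial_T G(x,t)$ produces $\psi_\bullet=\lambda_\bullet\cD_{\lambda_\bullet}^+w+X\cD_{\lambda_\bullet}^+w$, and everything you then want to do with $\psi_\bullet$ --- feed $\|\ell\psi_\bullet\|_X$ into \Cref{thm:FU_mixing}/\Cref{prop:improved_FU} in the case $\mu_0\ge 1/4$, or invoke exponential mixing for the H\"{o}lder observable $\cW_t^+\psi_{\lambda_0}$ in the case $\mu_0<1/4$ --- requires $X\psi_\bullet$ (hence $X^2\cD$) or at least a H\"{o}lder modulus for $X\cD$. \Cref{thm:Rav}(c) only gives $\|\cD\|_{X,j}\ll\|w\|_{W^{5+j}}$, so this needs $j=2$, i.e.\ $w\in W^7$, which the proposition does not assume. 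The same loss appears in the Taylor remainder $\int_t^\tau[\partial_T G(x,T)-\partial_T G(x,t)]\,\diff T$, which you acknowledge but do not carry out: shifting $\geo_{\log T}$ to $\geo_{\log t}$ in the $X\cD$ term again asks for one more $X$-derivative than $W^6$ supplies. Second, the principal-series refinement as you state it rests on $\|a\|_{L^2(\volal)}\ll\nal^{1/2}t^{1/2}$; but $a(x)=\tau(x,t)-t=\int_0^{\tau(x,t)}(1-\alpha)\circ\horo_r(x)\,\diff r$, and the classical $L^2$ bound for horocycle ergodic integrals gives $\|a\|_2\ll\nal\,t^{1/2}$ (the $\|1-\alpha\|_2$ factor enters linearly, not to the power $1/2$), so Cauchy--Schwarz yields only $\nal$, not the claimed $\nal^{1/2}$. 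In the paper the square-root gain is precisely the output of balancing the $\|v\|_\infty$ and $\|v\|_X$ terms in \Cref{prop:improved_FU} with the optimal $\sigma$, an ingredient your route does not have a substitute for.
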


Note that, up to changing $\delta_0$, when $\mu_0<1/4$ we have $\theta^{\nu_0} \ll t^{\nu_0-\delta_0}$. 
We are now ready to prove our main result. Let $f,\ell \in W^7(M)$ with $\volal(f) = \volal(\ell)=0$, and let $t\geq 2$. 
\Cref{lem:key_lemma} and \eqref{eq:change_integrals} yield
\[
\begin{split}
    &\int_M f\circ \tc_t \, \ell \volal = \frac{1}{t}\int_M A^0_t \cdot f \circ \tc_t \cdot \alpha \ell \vol \\
    &\qquad \qquad + \frac{1}{t} \int_M  \Big( \int_{t_0}^{\tau(x,t)} (\alpha f)  \circ \horo_r\diff r \Big) \cdot \left[ \alpha \ell - X(\alpha \ell) \right] \vol - \frac{1}{t} \int_M  \Big( \int_{t_0}^{\tau(x,t)} X(\alpha f)  \circ \horo_r\diff r \Big) \cdot (\alpha \ell) \vol\\
    &\qquad \qquad + \frac{1}{t} \int_0^{t_0} \Big[ \int_M  (\alpha f) \circ \horo_r \cdot \alpha \ell \, \vol - \int_M  X(\alpha f) \circ \horo_r \cdot \alpha \ell \, \vol  - \int_M (\alpha f) \circ \horo_r  \cdot X(\alpha \ell)  \vol \Big] \diff r.
\end{split}
\]

\noindent \textbf{Case 1.}
If $\mu_0 < 1/4$, then,  from \Cref{prop:correlations_with_integral} and \Cref{prop:correlations_with_small_integral}, we obtain
\begin{multline*}
    \Bigg\lvert    \int_M f\circ \tc_t \, \ell \volal -  \frac{1}{t} \int_0^{t_0} \Big[ \int_M  (\alpha f) \circ \horo_r \cdot \alpha \ell \, \vol - \int_M  X(\alpha f) \circ \horo_r \cdot \alpha \ell \, \vol  - \int_M (\alpha f) \circ \horo_r  \cdot X(\alpha \ell)  \vol \Big] \diff r \Bigg\rvert \\
    \ll \|f\|_{W^7} \|\ell\|_{W^7} t^{-1+\nu_0 -\delta_0}.
\end{multline*}
\Cref{cor:Ratner} implies that
\begin{equation*}
    \Bigg\lvert    \int_M f\circ \tc_t \, \ell \volal - \frac{t_0^{\nu_0} }{t} \cC(\alpha f, \alpha \ell) \Bigg\rvert  
    \ll \|f\|_{W^7} \|\ell\|_{W^7} t^{-1+\nu_0 -\delta_0}.
\end{equation*}
Defining $\cC_{\alpha}(f, \ell) := \cC(\alpha f, \alpha \ell)$ proves the result in this case.

\medskip

\noindent \textbf{Case 2.}
If $\mu_0=1/4$, using again \Cref{prop:correlations_with_integral}, \Cref{prop:correlations_with_small_integral}, and Ratner's effective mixing result \cite[Theorem 2]{Rat1}, we obtain 
\[
\begin{split}
    \Bigg\lvert    \int_M f\circ \tc_t \, \ell \volal \Bigg\rvert \ll  \|f\|_{W^7} \|\ell\|_{W^7} \Big(\frac{1}{t} \int_1^{t_0}  r^{-1} \log r \diff r + \frac{(\log t)^2}{t} \Big) \ll  \|f\|_{W^7} \|\ell\|_{W^7} \frac{(\log t)^2}{t}.
\end{split}
\]

\medskip

\noindent \textbf{Case 3.}
The proof in the case $\mu_0 > 1/4$ is analogous to the previous case and is immediate, once we establish the following lemma.

\begin{lemma}\label{lem:homogeneous_corr}
Assume $\mu_0 > 1/4$. For every $w,v \in W^7(M)$ with $\vol(w)=\vol(v) = 0$, we have
\[
    \Bigg\lvert  \frac{1}{t} \int_1^{t} \Big( \int_M w\circ \horo_r\, v \vol \Big) \diff r \Bigg\rvert \ll \|w\|_{W^7} \|v\|_{W^7} t^{-1}.
\]
\end{lemma}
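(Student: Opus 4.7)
First I would swap the order of integration to write
\[
\int_1^t \int_M w\circ \horo_r \cdot v\, \vol\, \diff r
= \int_M \Big(\int_1^t w\circ \horo_r \, \diff r\Big) \cdot v\, \vol
\]
and apply \Cref{thm:Rav} to the ergodic integral inside. Since $w$ has zero average, \Cref{thm:Rav}(a) eliminates the $\mu = 0$ contribution, and the hypothesis $\mu_0 > 1/4$ forces every remaining $\mu$ to satisfy $\lambda_\pm(\mu) = \tfrac{1}{2} \pm \tfrac{i}{2}\sqrt{4\mu - 1}$, so $|t^{\lambda_\pm}| = t^{1/2}$. After absorbing the bounded piece $\int_0^1 w\circ \horo_r \, \diff r$ and the $O(\|w\|_{W^6}\|v\|_2)$ error from \Cref{thm:Rav}, the problem reduces to estimating
\[
\sum_{\mu > 0,\, \bullet \in \{+,-\}} t^{\lambda_\bullet}\int_M \cD_{\lambda_\bullet} w(\geo_{\log t}\, \cdot, t) \cdot v \, \vol.
\]

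The crucial observation is that, for each $\mu$, the function $\cD_{\lambda_\bullet} w(\cdot, t)$ is constructed from the spectral component $w_\mu$ via the explicit formulas in the proof of \Cref{thm:Rav} (a combination of $Vw_\mu \circ \geo_{-\xi}$, $\int_0^1 w_\mu \circ \horo_r \diff r$, and $\int_0^1 Xw_\mu \circ \horo_r \diff r$) and hence lies in the $\mu$-isotypic subspace of $L^2(M)$. Using $\geo$-invariance of $\vol$ to change variables, each summand becomes a matrix coefficient of the geodesic flow at parameter $-\log t$ between $\cD_{\lambda_\bullet}w(\cdot,t)$ and the projection $v_\mu$. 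Because $\mu_0 > 1/4$, the relevant representations are tempered (principal series plus possibly discrete series), and a standard Howe--Moore-type estimate combined with Sobolev embedding yields, for some absolute $k$,
\[
\Big|\int_M \phi_\mu \cdot \psi_\mu \circ \geo_s \, \vol\Big| \ll e^{-|s|/2}\, \|\phi_\mu\|_{W^k} \|\psi_\mu\|_{W^k}.
\]
Setting $s = \log t$ gives $t^{-1/2}$, which precisely cancels the $t^{1/2}$ factor from $|t^{\lambda_\bullet}|$ and leaves $O(\|\cD_{\lambda_\bullet} w(\cdot, t)\|_{W^k} \|v_\mu\|_{W^k})$ per summand.

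To conclude, I would bound $\|\cD_{\lambda_\bullet} w(\cdot, t)\|_{W^k}$ uniformly in $t$ by $\|w_\mu\|_{W^{5+k}}$ using \Cref{thm:Rav}(b)--(d), supplemented with direct estimates on $V$- and $\Theta$-derivatives of $\cP_{\lambda_\bullet}$ and $\cQ_{\lambda_\bullet}$ derived from their explicit integral expressions (the $V$-derivative produces an extra $e^{-\xi}$ factor from $V(\,\cdot\circ \geo_{-\xi}) = e^{-\xi}Vf\circ\geo_{-\xi}$, which preserves convergence because $\re(1+\lambda_\bullet) = 3/2 > 0$). Summing the resulting spectral bounds by Cauchy--Schwarz and Parseval produces $\ll \|w\|_{W^7}\|v\|_{W^7}$; dividing by $t$ gives the lemma. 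The main obstacle is the effective geodesic matrix coefficient decay with Sobolev-summable constants valid uniformly across the spectrum, which is precisely where the gap $\mu_0 > 1/4$ enters: it guarantees the uniform rate $e^{-|s|/2}$, without which the crucial cancellation against $|t^{\lambda_\bullet}| = t^{1/2}$ would fail.
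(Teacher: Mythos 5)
Your approach is genuinely different from the paper's, which works purely through the second-order ODE satisfied by the single spectral correlation $x(t) = \int_M w_n \circ \horo_t \, v_m \, \vol$ (with $w_n, v_m$ joint $\square$- and $\Theta$-eigenfunctions): integrating the ODE once and integrating by parts expresses $(4\mu-1)\int_1^t x(r)\diff r$ in terms of $t^2 x'(t)$, $t x(t)$ and a controlled inhomogeneity, all of which are $O(1)$ by Ratner's bounds. No geodesic matrix coefficient estimate is invoked, and the cancellation that kills the naive $\log t$ loss is extracted directly from the ODE. Your route instead tries to see this cancellation at the level of the asymptotic expansion from \Cref{thm:Rav} via an exponential-mixing estimate for the geodesic flow.

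There is a genuine gap in the step where you claim $\|\cD_{\lambda_\bullet}w(\cdot,t)\|_{W^k} \ll \|w_\mu\|_{W^{5+k}}$ uniformly in $t$ for some fixed $k\geq 1$. This is false. Writing out the formula from the proof of \Cref{thm:Rav},
\[
\cQ_{\lambda_{\bullet}}w(x,T) = \frac{1}{\bullet \nu} \int_0^{\log T} e^{-\lambda_{\bullet}\xi}\, Vw_{\mu} \circ \geo_{-\xi}(x)\, \diff \xi,
\]
the $V$- and $X$-derivatives behave well (you correctly note that $V(\,\cdot\, \circ \geo_{-\xi}) = e^{-\xi}V(\,\cdot\,)\circ\geo_{-\xi}$ only improves the decay), but the $U$-derivative satisfies $U(\,\cdot\,\circ\geo_{-\xi}) = e^{\xi}U(\,\cdot\,)\circ\geo_{-\xi}$, producing a factor $e^{\xi}$ in the integrand. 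With $\Re\lambda_\bullet = 1/2$, the resulting integral $\int_0^{\log T} e^{\xi/2}(\cdots)\diff\xi$ grows like $T^{1/2}$. Since the $W^1$-norm controls $Y$ and $\Theta$ derivatives, both of which involve $U$, one has $\|\cD_{\lambda_\bullet}w(\cdot,T)\|_{W^1} \gtrsim T^{1/2}$ generically. This is precisely why \Cref{thm:Rav}(d) grants only H\"older regularity in the $U$-direction, not differentiability. Feeding the correct $T^{1/2}$ bound back into your chain gives $t^{1/2}\cdot t^{-1/2}\cdot t^{1/2} = t^{1/2}$ before dividing by $t$, i.e. $O(t^{-1/2})$, which is no better than what \Cref{thm:FU_mixing} already gives, so the crucial cancellation against $|t^{\lambda_\bullet}|$ is not actually achieved. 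To salvage the approach one would have to either unfold the $\geo$-average inside $\cD_{\lambda_\bullet}w$ and exploit the oscillation of the phase $e^{-i\,\Im(\lambda_\bullet)\xi}$ against that of the geodesic matrix coefficient (a careful stationary-phase/oscillatory-integral argument, not just Howe--Moore decay), or use a version of the matrix coefficient bound that requires no $U$-regularity on one side, neither of which is "standard" as asserted.
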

\begin{proof}
    Let us first assume that $w=w_n$ and $v=v_m$ satisfy $\square w_n = \mu w_n$, $\square v_m = \mu v_m$ for some Casimir eigenvalue $\mu >1/4$, and $\Theta w_n = i n w_n$ and $\Theta v_m = i m v_m$. We can also assume that $\|w_n\|_2= \|v_m\|_2 =1$. Under these assumptions, the function 
    \[
        x(t) := \int_M w_n \circ \horo_t\, v_m \vol
    \]
    satisfies the differential equation
    \begin{equation}\label{eq:eqdiff_Ratner}
        t^2x''(t) +3tx'(t) +4\mu x(t) = y(t),
    \end{equation}
    where 
    \[
        y(t) = - 4 x''(t) - \frac{4}{t}x'(t) - 4 i (m+n) x'(t) - 2i \frac{m+n}{t}x(t) + 4\frac{(m-n)^2}{t^2}x(t),
    \]
    see, e.g., \cite[Proposition 2.3]{FlaRav}. 
    \emph{A posteriori}, following Ratner's Theorem \cite[Theorem 2]{Rat1}, we have the estimate $|x(t)|\ll \|w_n\|_{W^3} \|v_m\|_{W^3} t^{-1}$. By \cite[Equation (10)]{FlaRav}, from the latter we deduce $|x'(t)|\ll \|w_n\|_{W^3} \|v_m\|_{W^3} t^{-2}$, from which, by \eqref{eq:eqdiff_Ratner}, it follows that $|x''(t)|\ll \|w_n\|_{W^3} \|v_m\|_{W^3} t^{-3}$. Consequently, we can bound
    $|y(t)|\ll \|w_n\|_{W^3} \|v_m\|_{W^3} t^{-2}$. Combining these estimates with \eqref{eq:eqdiff_Ratner}, we deduce
    \[
        \Bigg\lvert  \int_1^t r^2x''(r) + 3 rx'(r) + 4\mu x(r) \diff r \Bigg\rvert \ll \|w_n\|_{W^3} \|v_m\|_{W^3}. 
    \]
    Integrating by parts the left-hand side above gives us
    \[
        (4\mu - 1) \Bigg\lvert \int_1^t x(r) \diff r \Bigg\rvert \ll t^2 |x'(t)| + t|x(t)| + \|w_n\|_{W^3} \|v_m\|_{W^3} \ll \|w_n\|_{W^3} \|v_m\|_{W^3}.
    \]
    We proved our claim under our additional assumptions on $w=w_n$ and $v=v_m$. 
    Standard results from harmonic analysis allow to express correlations of arbitrary functions in $W^7(M)$ as a countable sum of correlations of functions $w_n, v_m$ as above.
\end{proof}

Finally, let us turn to the proof of \Cref{thm:continuity}. Assume we are still in the case $\mu_0 > 1/4$, and also that $\alpha f$ and $\alpha \ell$ are supported on the principal series. The same argument as above, using the second part of \Cref{prop:correlations_with_small_integral}, proves that 
\begin{multline*}
    \Bigg\lvert    \int_M f\circ \tc_t \, \ell \volal -  \frac{1}{t} \int_0^{t_0} \Big[ \int_M  (\alpha f) \circ \horo_r \cdot \alpha \ell \, \vol - \int_M  X(\alpha f) \circ \horo_r \cdot \alpha \ell \, \vol  - \int_M (\alpha f) \circ \horo_r  \cdot X(\alpha \ell)  \vol \Big] \diff r \Bigg\rvert \\
    \ll \nal^{\frac{1}{2}} \, \|f\|_{W^7} \|\ell\|_{W^7} t^{-1}. 
\end{multline*}
By \Cref{cor:Ratner}, we have
\[
\Bigg\lvert    \int_M f\circ \tc_t \, \ell \volal -  \frac{t_0}{t} \int_M (\alpha f)\circ \horo_{t_0} \, \alpha \ell \vol \Bigg\rvert \\
    \ll \nal^{\frac{1}{2}} \, \|f\|_{W^7} \|\ell\|_{W^7} \, t^{-1}.
\]
Correlations of coboundaries for the horocycle flow decay as $O(t^{-2})$, see, e.g., the proof of \Cref{lem:homogeneous_corr} or of \Cref{thm:main_spectral}. Therefore,
\[
\begin{split}
    \Bigg\lvert \int_M (\alpha f)\circ \horo_{t} \, \alpha \ell \vol - \int_M (\alpha f)\circ \horo_{t_0} \, \alpha \ell \vol \Bigg\rvert &= \Bigg\lvert \int_{t_0}^t \int_M U(\alpha f)\circ \horo_{r} \, \alpha \ell \vol \diff r\Bigg\rvert \ll \|f\|_{W^7} \|\ell\|_{W^7} \, \int_{t_0}^t r^{-2} \diff r  \\
    &\ll \|f\|_{W^7} \|\ell\|_{W^7} \, \frac{t-t_0}{t^2} \ll \nal \, \|f\|_{W^7} \|\ell\|_{W^7} \, t^{-1-\delta_0}.
\end{split}
\]
Since $t_0 / t = 1 + \nal \, t^{-\delta_0}$, from Ratner's quantitative mixing theorem \cite{Rat1}, we conclude
\[
\Bigg\lvert    \int_M f\circ \tc_t \, \ell \volal -  \int_M  (\alpha f) \circ \horo_t \cdot \alpha \ell \, \vol \Bigg\rvert \ll \frac{1}{t} \, \nal^{\frac{1}{2}} \, \|f\|_{W^7} \, \|\ell\|_{W^7},
\]
which proves \Cref{thm:continuity}.

\section{Proofs of \Cref{thm:main_2} and \Cref{prop:correlations_with_small_integral}}

We complete the proof of \Cref{thm:main} by proving \Cref{thm:main_2} and \Cref{prop:correlations_with_small_integral}.

\subsection{Proof of \Cref{thm:main_2}}

We exploit the full expansion of $\int_0^t u \circ \tc_r(x) \diff r$ given by \eqref{eq:integral_horo_tc}, namely
\begin{multline*}
    \Bigg\lvert\int_0^{t} u \circ \tc_r(x) \diff r \\
    - \sum_{\substack{ \mu \in \Spec_{\geq 0}\\ \bullet \in \{+,-\}}}\tau(x,t)^{\lambda_{\bullet}} \, \big[\cP_{\lambda_{\bullet}}(\alpha f) (\geo_{\log \tau(x,t)}(x),t) - \cQ_{\lambda_{\bullet}}(\alpha f) (\geo_{\log \tau(x,t)} \circ \tc_t(x),t)\big] \Bigg\rvert \ll \|u\|_{W^6}. 
\end{multline*}
For $\mu \in \Spec_{\geq 0}$, let us define 
\[
\begin{split}
    &P_{\lambda_{\bullet}}(x,t) = \tau(x,t)^{\lambda_{\bullet}} \cP_{\lambda_{\bullet}}(\alpha u) (\geo_{\log \tau(x,t)}(x),t), \\
    &Q_{\lambda_{\bullet}}(x,t) = \tau(\tc_{-t}x,t)^{\lambda_{\bullet}} \cQ_{\lambda_{\bullet}}(\alpha u) (\geo_{\log \tau(\tc_{-t}x,t)}(x),t) = \tau_{-}(x,t)^{\lambda_{\bullet}} \cQ_{\lambda_{\bullet}}(\alpha u) (\geo_{\log \tau_{-}(x,t)}(x),t),
\end{split}
\]
where the last equality follows from the fact that $\tau(\tc_{-t}x,t) = -\tau(x,-t)$.
Using the invariance of the measure $\volal$, we have
\begin{equation}\label{eq:Azt}
\begin{split}
    &\Bigg\lvert \int_M \Big( \int_0^t u \circ \tc_r \diff r \Big) \cdot f\circ \tc_t \cdot \ell \volal\Bigg\rvert \\
    & \qquad \ll  \sum_{\substack{ \mu \in \Spec_{\geq 0}\\ \bullet \in \{+,-\}}}   
    \Bigg[\Bigg\lvert\int_M f\circ \tc_t \cdot \ell \cdot P_{\lambda_{\bullet}}(x,t) \volal \Bigg\rvert +\Bigg\lvert\int_M  \ell\circ \tc_{-t} \cdot f  \cdot Q_{\lambda_{\bullet}}(x,t) \volal\Bigg\rvert \Bigg] +\|u\|_{W^6} \, \|f\|_{\infty} \, \|\ell\|_{\infty}.
\end{split}
\end{equation}

We will estimate each integral appearing in the right-hand side using \Cref{cor:alternative_FU}. For this reason, we need some estimates on the norms 
$\|P_{\lambda_{\bullet}}(x,t)\|_X$ and $\|Q_{\lambda_{\bullet}}(x,t)\|_X$.

\begin{lemma}\label{lem:first_one}
For all $(\mu, \bullet) \in \Spec_{\geq 0} \times \{+,-\}$, we have
\[
    \|P_{\lambda_{\bullet}}(x,t)\|_X \ll t^{\Re \, \lambda_{\bullet}}\|\cP_{\lambda_{\bullet}} (\alpha u)(x,t)\|_X, \qquad \text{and} \qquad \|Q_{\lambda_{\bullet}}(x,t)\|_X \ll t^{\Re \, \lambda_{\bullet}}\|\cQ_{\lambda_{\bullet}}(\alpha u)(x,t)\|_X.
\]
Furthermore, assume that $\mu_0 < 1/4$, and let $\alpha_0 := 1- \frac{1}{\alpha}$, $\cP_t(x):= \cP_{\lambda_0}(\alpha u)(x,t)$, and $\cQ_t(x):= \cQ_{\lambda_0}(\alpha u)(x,t)$. Then,
\[
\begin{split}
    &|P_{\lambda_0}(x,t) - t^{\lambda_0} \cP_t \circ \geo_{\log t}(x)| \ll  t^{\lambda_0 - \delta_0}\, \|\cP_t\|_{X}, \qquad \text{and} \qquad |Q_{\lambda_0}(x,t) - t^{\lambda_0} \cQ_t \circ \geo_{\log t}(x)| \ll  t^{\lambda_0 - \delta_0}\, \|\cQ_t\|_{X},
\end{split}
\]
and 
\[
\begin{split}
    & \Bigg\lvert XP_{\lambda_0}(x,t) -  t^{\lambda_0}\Bigg[ \alpha_0 \circ \tc_t \cdot \Big( \lambda_0  \cP_t -  X\cP_t \Big) \circ \geo_{\log t} -  X\cP_t\circ \geo_{\log t} \Bigg]
    \Bigg\rvert \ll t^{\lambda_0 - \delta_0} \|\cP\|_{X,2}, \qquad \text{and}\\
    &\Bigg\lvert XQ_{\lambda_0}(x,t) -  t^{\lambda_0}\Bigg[ 
    \alpha_0 \circ \tc_{-t}\cdot \Big( \lambda_0\,  \cQ_t -  X \cQ_t\Big)\circ \geo_{\log t} - X \cQ_t \circ \geo_{\log t} \Bigg] \Bigg\rvert \ll t^{\lambda_0 - \delta_0} \|\cQ\|_{X,2}.
\end{split}
\]
\end{lemma}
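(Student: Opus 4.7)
The plan is to derive both assertions by a direct chain-rule computation and then to replace every occurrence of $\tau$ (or $\tau_-$) by $t$, collecting the resulting errors via Lemmas~\ref{lem:Xtau_over_tau} and~\ref{lem:tau_beta_minus_t_beta}. Since $X$ is the generator of the geodesic flow, the chain rule gives, for any smooth $\cP$ on $M$,
\[
    X\bigl[\cP(\geo_{\log \tau(x,t)}(x))\bigr] = \Bigl(1 + \tfrac{X\tau(x,t)}{\tau(x,t)}\Bigr)\, (X\cP)(\geo_{\log \tau(x,t)}(x)),
\]
so differentiating $P_{\lambda_\bullet}(x,t) = \tau(x,t)^{\lambda_\bullet}\, \cP_{\lambda_\bullet}(\alpha u)(\geo_{\log \tau(x,t)}(x),t)$ yields
\[
    XP_{\lambda_\bullet} = \tau^{\lambda_\bullet}\Bigl[\lambda_\bullet \tfrac{X\tau}{\tau}\, \cP_{\lambda_\bullet} \circ \geo_{\log \tau} + \Bigl(1 + \tfrac{X\tau}{\tau}\Bigr)\, X\cP_{\lambda_\bullet} \circ \geo_{\log \tau}\Bigr],
\]
with the analogous formula for $Q_{\lambda_\bullet}$ upon replacing $\tau$ by $\tau_-$ and $\tc_t$ by $\tc_{-t}$.

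For the first part, I would apply only the crude estimates $|\tau(x,t)| \ll t$ from~\eqref{eq:t_tau_bound} and $|X\tau/\tau| \ll 1$ from Lemma~\ref{lem:Xtau_over_tau} to the identity above, which immediately yields $\|P_{\lambda_\bullet}\|_X \ll t^{\Re \lambda_\bullet} \|\cP_{\lambda_\bullet}\|_X$ and, by the same argument applied to $\tau_-$, the corresponding bound on $Q_{\lambda_\bullet}$.

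For the asymptotic expansion in the case $\mu_0<1/4$, I would replace every $\tau$ by $t$ while controlling three sources of error. First, Lemma~\ref{lem:tau_beta_minus_t_beta} gives $|\tau^{\lambda_0} - t^{\lambda_0}| \ll t^{\lambda_0} R(t)/t \ll t^{\lambda_0-\delta_0}$, which handles the leading factor and proves the two $L^\infty$ estimates. Second, by the Lipschitz regularity of $\cP_t$ in the $X$-direction granted by Theorem~\ref{thm:Rav}(c), the substitution $\geo_{\log \tau(x,t)}(x) \rightsquigarrow \geo_{\log t}(x)$ costs
\[
\|X\cP_t\|_\infty \cdot |\log\tau - \log t| \ll t^{-\delta_0}\|\cP_t\|_X
\]
inside $\cP_t$, and analogously $t^{-\delta_0}\|\cP_t\|_{X,2}$ when the same substitution is made inside $X\cP_t$ (which accounts for the second derivative appearing in the norm $\|\cP\|_{X,2}$). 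Third, Lemma~\ref{lem:Xtau_over_tau} together with the identity $1/\alpha = 1 - \alpha_0$ gives the first-order expansion $X\tau/\tau = -\alpha_0 \circ \tc_t + O(R(t)/t)$, whence $1 + X\tau/\tau = 1 - \alpha_0\circ\tc_t + O(R(t)/t)$ and $\lambda_0\, X\tau/\tau = -\lambda_0\, \alpha_0 \circ \tc_t + O(R(t)/t)$. Inserting all of this into the chain-rule formula and grouping the two $\alpha_0\circ\tc_t$-contributions produces the claimed expression for $XP_{\lambda_0}$, with total error of order $t^{\lambda_0-\delta_0}\|\cP\|_{X,2}$. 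The identity for $XQ_{\lambda_0}$ follows by the same argument with $\tau_-$ and $\tc_{-t}$ in place of $\tau$ and $\tc_t$.

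The computation is essentially bookkeeping; the only mild subtlety is tracking the second $X$-derivative of $\cP_t$ that enters through the Lipschitz estimate on $X\cP_t$, which is precisely what forces the norm $\|\cP\|_{X,2}$ (rather than $\|\cP\|_X$) in the error term of the final two estimates.
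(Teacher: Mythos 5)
Your approach matches the paper's: differentiate $P_{\lambda_\bullet}$ and $Q_{\lambda_\bullet}$ by the chain rule, use $|\tau|\ll t$ and $|X\tau/\tau|\ll 1$ for the crude $\|\cdot\|_X$ bounds, then replace $\tau$ by $t$ via Lemma~\ref{lem:tau_beta_minus_t_beta} and $X\tau/\tau$ by its first-order expansion via Lemma~\ref{lem:Xtau_over_tau}, tracking three sources of error. The bookkeeping is correct, including the observation that the $\|\cP\|_{X,2}$ norm is forced by the Lipschitz substitution inside $X\cP_t$.

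One remark on signs, which you should not gloss over. Your derivation, using Lemma~\ref{lem:Xtau_over_tau} and $\alpha_0 = 1-1/\alpha$, correctly yields $X\tau/\tau \approx -\alpha_0\circ\tc_t$, hence
\[
    XP_{\lambda_0}(x,t) \approx t^{\lambda_0}\Big[-\alpha_0\circ\tc_t\cdot\big(\lambda_0\cP_t + X\cP_t\big)\circ\geo_{\log t} + X\cP_t\circ\geo_{\log t}\Big].
\]
This does \emph{not} coincide with the expression displayed in the lemma, which reads $\alpha_0\circ\tc_t\cdot(\lambda_0\cP_t - X\cP_t)\circ\geo_{\log t} - X\cP_t\circ\geo_{\log t}$. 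Nor does it coincide with the expression computed in the paper's own proof, which recalls Lemma~\ref{lem:Xtau_over_tau} with the sign flipped (asserting $|X\tau_\pm/\tau_\pm - \alpha_0\circ\tc_{\pm t}|\ll t^{\lambda_0-1}$ instead of $|X\tau_\pm/\tau_\pm + \alpha_0\circ\tc_{\pm t}|\ll t^{\lambda_0-1}$). Your formula is the correct one given the definitions; the lemma statement and the paper's proof each contain (different) sign slips. This is harmless, since every downstream use of this lemma (in the proof of Theorem~\ref{thm:main_2}) takes absolute values of the individual terms, so only the structure $t^{\lambda_0}\cdot\big(\text{bounded combination of }\alpha_0\circ\tc_{\pm t},\,\cP_t\circ\geo_{\log t},\,X\cP_t\circ\geo_{\log t}\big) + O(t^{\lambda_0-\delta_0}\|\cP\|_{X,2})$ matters. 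But you should not claim, as you do, that your chain-rule computation "produces the claimed expression" verbatim---it does not, and saying so papers over a discrepancy you ought to have noticed and flagged.
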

\begin{proof}
    Let us consider the case $(\mu,\bullet) \neq (1/4,+)$.
    From the definitions of $P_{\lambda_{\bullet}}(x,t)$ and $Q_{\lambda_{\bullet}}(x,t)$, and from \eqref{eq:t_tau_bound}, it is immediate that 
    \[
        \|P_{\lambda_{\bullet}}(x,t)\|_{\infty} \ll t^{\Re \, \lambda_{\bullet}}\|\cP_{\lambda_{\bullet}}(\alpha u)(x,t)\|_{\infty}, \qquad \text{and} \qquad \|Q_{\lambda_{\bullet}}(x,t)\|_{\infty} \ll t^{\Re \, \lambda_{\bullet}}\|\cQ_{\lambda_{\bullet}}(\alpha u)(x,t)\|_{\infty}.
    \]
    We compute the derivatives
    \[
    \begin{split}
        XP_{\lambda_{\bullet}}(x,t) = &X \Big( \tau(x,t)^{\lambda_{\bullet}} \Big) \cP_{\lambda_{\bullet}}(\alpha u)( \geo_{\log \tau(x,t)}(x),t ) \\
        &+ \tau(x,t)^{\lambda_{\bullet}} X\cP_{\lambda_{\bullet}}(\alpha u) (\geo_{\log \tau(x,t)}(x),t) \Big(1+ X\log \tau(x,t) \Big) \\
        =& \tau(x,t)^{\lambda_{\bullet}} \Bigg[ {\lambda_{\bullet}}  \frac{X\tau(x,t)}{\tau(x,t)} \cP_{\lambda_{\bullet}}(\alpha u) (\geo_{\log \tau(x,t)} (x),t) + X\cP_{\mu}^{\bullet}(\alpha u) (\geo_{\log \tau(x,t)}(x),t) \Big(1+ \frac{X\tau(x,t)}{\tau(x,t)} \Big)\Bigg],
    \end{split}
    \]
    and, similarly,  
    \[
    \begin{split}
        XQ_{\lambda_{\bullet}}(x,t) =& X \Big( {\tau}_{-}(x,t)^{\lambda_{\bullet}} \Big) \cQ_{\lambda_{\bullet}}(\alpha u) (\geo_{\log {\tau}_{-}(x,t)}(x),t) \\
        &+ {\tau}_{-}(x,t)^{\lambda_{\bullet}} X\cQ_{\lambda_{\bullet}}(\alpha u) ( \geo_{\log {\tau}_{-}(x,t)} (x),t) \Big(1+ X\log {\tau}_{-}(x,t) \Big) \\
        =&  {\tau}_{-}(x,t)^{\lambda_{\bullet}} \Bigg[{\lambda_{\bullet}} \frac{X\tau_{-}(x,t)}{\tau_{-}(x,t)} \cQ_{\lambda_{\bullet}}(\alpha u)  (\geo_{\log {\tau}_{-}(x,t)} (x) ,t) + X\cQ_{\lambda_{\bullet}}(\alpha u) (\geo_{\log {\tau}_{-}(x,t)}(x),t) \Big(1+ \frac{X\tau_{-}(x,t)}{\tau_{-}(x,t)} \Big)\Bigg].
    \end{split}
    \]
    \Cref{lem:Xtau_over_tau} and \eqref{eq:t_tau_bound} prove the bounds on $\|P_{\lambda_{\bullet}}(x,t)\|_X$ and $\|Q_{\lambda_{\bullet}}(x,t)\|_X$. 
    The computations for the case $(\mu,\bullet)=(1/4,+)$ are analogous and are left to the reader.

    Let us now assume that $\mu_0 < 1/4$. 
    By \Cref{lem:tau_beta_minus_t_beta},
    \begin{multline*}
        |P_{\lambda_0}(x,t) - t^{\lambda_0} \cP_t \circ \geo_{\log t}(x)|  \\ \ll |\tau(x,t)^{\lambda_0} - t^{\lambda_0} | \cdot \|\cP_t\|_{\infty} +  t^{\lambda_0} \, |\log \tau(x,t) - \log t|\cdot \|X \cP_t\|_{\infty}  \ll  t^{\lambda_0 - \delta_0}\, \|\cP_t\|_{X},
    \end{multline*}
    and, similarly, 
    \begin{multline*}
        |Q_{\lambda_0}(x,t) - t^{\lambda_0} \cQ_t\circ \geo_{\log t}(x)|  \\ \ll |{\tau}_{-}(x,t)^{\lambda_0} - t^{\lambda_0} | \cdot \|\cQ_t\|_{\infty} +  t^{\lambda_0} \, |\log {\tau}_{-}(x,t) - \log t|\cdot \|X \cQ_t\|_{\infty}  \ll  t^{\lambda_0- \delta_0}\, \|\cQ_t\|_{X}.
    \end{multline*}
    Recall that, by \Cref{lem:Xtau_over_tau}, we have 
    \[
        \left\lvert \frac{X\tau_{\pm}(x,t)}{\tau_{\pm}(x, t)} - \alpha_0 \circ \tc_{\pm t}\right\rvert \ll t^{\lambda_0-1}. 
    \]
    Then, from the computations of the derivatives above, we obtain
    \begin{equation*}
        \Bigg\lvert XP_{\lambda_0}(x,t) - t^{\lambda_0}\Big( \lambda_0 \, \alpha_0 \circ \tc_t \, \cP_t \circ \geo_{\log t} +  X\cP_t \circ \geo_{\log t} + \alpha_0 \circ \tc_t \, X\cP \circ \geo_{\log t} \Big)\Bigg\rvert \\
        \ll t^{\lambda_0 - \delta_0} \|\cP\|_{X,2},
    \end{equation*}
    and 
    \begin{equation*}
        \Bigg\lvert XQ_{\lambda_0}(x,t) - t^{\lambda_0}\Big( \lambda_0 \alpha_0 \circ \tc_{-t} \, \cQ_t \circ \geo_{\log t} +  X\cQ_t \circ \geo_{\log t} + \alpha_0 \circ \tc_{-t} \, X\cQ_t \circ \geo_{\log t} \Big)\Bigg\rvert \\
        \ll t^{\lambda_0 - \delta_0} \|\cQ_t\|_{X,2}.
    \end{equation*}       
    This completes the proof.
\end{proof}

\medskip

We go back to the proof of  \Cref{thm:main_2}. We consider two separate cases, depending on the value of $\mu_0$.

\noindent \textbf{Case 1.}
Let us assume that $\mu_0 \geq 1/4$. \Cref{cor:alternative_FU} together with the estimates in \Cref{lem:first_one} yield
\begin{equation*}
\begin{split}
    &   \Bigg\lvert \int_M f\circ \tc_t \, \ell \, P_{\lambda_{\bullet}}(x,t) \volal \Bigg\rvert 
    \ll \frac{R(t)}{t} \|f\|_{W^7} \, \|\ell P_{\lambda_{\bullet}}(x,t)\|_{X}
    \ll \frac{R(t)^2}{t}\|f\|_{W^7} \, \|\ell\|_{X} \|\cP_{\lambda_{\bullet}}(\alpha u)(x,t)\|_{X},\\
    &   \Bigg\lvert \int_M \ell\circ \tc_{-t} \, f \, Q_{\lambda_{\bullet}}(x,t) \volal \Bigg\rvert
    \ll \frac{R(t)}{t} \|\ell\|_{W^7} \, \|fQ_{\lambda_{\bullet}}(x,t) \|_{X}
    \ll \frac{R(t)^2}{t} \|\ell\|_{W^7} \, \|f\|_{X} \|\cQ_{\lambda_{\bullet}}(\alpha u)(x,t)\|_{X}.
\end{split}
\end{equation*}
Substituting into \eqref{eq:Azt}, we conclude 
\begin{equation*}
\begin{split}
    \Bigg\lvert \int_M \Big( \int_0^t u \circ \tc_r \diff r \Big) \cdot f\circ \tc_t \, \ell \volal \Bigg\rvert   & \ll \|f\|_{W^7} \, \|\ell\|_{W^7} \frac{R(t)^2}{t}
    \sum_{\substack{ \mu \in \Spec_{\geq 0} \\ \bullet \in \{+,-\}}} \big[ \|\cP_{\lambda_{\bullet}}(\alpha u)(x,t)\|_X + \|\cQ_{\lambda_{\bullet}}(\alpha u)(x,t)\|_X\big] \\
    &\ll  \|u\|_{W^7} \, \|f\|_{W^7} \, \|\ell\|_{W^7} \frac{R(t)^2}{t}.
\end{split}
\end{equation*}

\medskip

\noindent \textbf{Case 2.}
We conclude by treating the case $\mu_0<1/4$. 
By \Cref{cor:alternative_FU} and  \Cref{lem:first_one}, for all $(\mu, \bullet) \in \Spec_{\geq 0} \times \{+,-\}$, with $ (\mu, \bullet)  \neq (\mu_0,+)$, we have
\begin{equation*}
\begin{split}
    &\Bigg\lvert \int_M f\circ \tc_t \, \ell \, P_{\lambda_{\bullet}}(x,t) \volal \Bigg\rvert \ll 
    t^{-\frac{1-\nu_0}{2}}\|f\|_{ W^6} \, \|\ell P_{\lambda_{\bullet}}(x,t)\|_{X}
    \ll \|f\|_{ W^6} \, \|\ell\|_{X} \|\cP_{\lambda_{\bullet}}(\alpha u)(x,t)\|_X \,t^{\nu_0 -\delta_0}, \\ 
    &\Bigg\lvert \int_M  \ell\circ \tc_{-t} \, f  \, Q_{\lambda_{\bullet}}(x,t) \volal  \Bigg\rvert \ll 
    t^{-\frac{1-\nu_0}{2}}\|\ell\|_{ W^6} \, \|f Q_{\lambda_{\bullet}}(x,t)\|_{X}   \ll \|f\|_{X} \, \|\ell\|_{W^6} \|\cQ_{\lambda_{\bullet}}(\alpha u)(x,t)\|_X \, t^{\nu_0 -\delta_0};
\end{split}
\end{equation*}
Thus, from \eqref{eq:Azt} we deduce
\begin{equation}\label{eq:Azt_second}
\begin{split}
    \Bigg\lvert \int_M \Big( \int_0^t u \circ \tc_r \diff r \Big) \cdot f\circ \tc_t \, \ell \volal \Bigg\rvert    \ll &
    \Bigg\lvert \int_M f\circ \tc_t \, \ell \, P_{\lambda_0}(x,t) \volal  \Bigg\rvert +
    \Bigg\lvert\int_M  \ell\circ \tc_{-t} \, f  \, Q_{\lambda_0}(x,t) \volal  \Bigg\rvert \\
    &+ \|u\|_{W^6} \,
    \|f\|_{W^6} \, \|\ell\|_{W^6}t^{\nu_0 -\delta_0}.
\end{split}
\end{equation}
It remains to study the two integrals in the right-hand side above. In the rest of the proof, we simplify the notation by setting $P_t(x) =P_{\lambda_0}(x,t)$, $\cP_t = \cP_{\lambda_0}(\alpha u)(\cdot, t)$, as well as $Q_t(x) =Q_{\lambda_0}(x,t)$, and $\cQ_t = \cQ_{\lambda_0}(\alpha u)(\cdot,t)$.
    
Let us start from the first one. 
By \Cref{cor:alternative_FU} with $w=f$ and $v=\ell \,  P_t$, we deduce that there exist H\"{o}lder function $\cF_t$ and $\widehat{\cF}_t$ (depending on $f$ only) such that
\[
\begin{split}
    &\Bigg\lvert \int_M f\circ \tc_t \, \ell \, P_t \volal  \Bigg\rvert \ll t^{\lambda_0 - 1} \Bigg[\Bigg\lvert \int_M f\circ \tc_t \, \ell P_t \, \cA_t^{+} \circ \geo_{\log t} \volal  \Bigg\rvert 
    +\Bigg\lvert \int_M \widehat{\cF}_t \circ \geo_{\log t} \, \ell P_t  \volal  \Bigg\rvert \\
    &\qquad +\Bigg\lvert \int_M {\cF}_t \circ \geo_{\log t} \, \Big[\Big( 1-\frac{X\alpha}{\alpha}\Big)\ell -X\ell \Big] P_t \volal  \Bigg\rvert 
    +\Bigg\lvert \int_M {\cF}_t \circ \geo_{\log t} \, \ell \, XP_t \volal  \Bigg\rvert
     \Bigg] + \|f\|_{W^7} \, \|\ell P_t\|_{X} t^{\lambda_0-1- \delta_0}.
\end{split}
\]
\Cref{lem:first_one} implies
\[
\begin{split}
    &\Bigg\lvert \int_M f\circ \tc_t \, \ell \, P_t \volal  \Bigg\rvert \ll  t^{\nu_0} \Bigg[\Bigg\lvert \int_M f\circ \tc_t \, \ell \, [\cP_t \cdot \cA_t^{+}] \circ \geo_{\log t} \volal  \Bigg\rvert \\
    & \qquad + \Bigg\lvert \int_M \alpha \ell [\cP_t \cdot \widehat{\cF}_t] \circ \geo_{\log t} \vol  \Bigg\rvert 
    + \Bigg\lvert \int_M [\alpha \ell - X(\alpha \ell) ] \cdot [\cP_t \cdot {\cF}_t] \circ \geo_{\log t} \vol  \Bigg\rvert \\
    & \qquad + \Bigg\lvert \int_M \alpha_0 \circ\tc_t \cdot \ell [\cF_t \cdot (\lambda_0 {\cP}_t-X{\cP}_t)] \circ \geo_{\log t} \volal  \Bigg\rvert 
    + \Bigg\lvert \int_M \alpha \ell  \cdot [\cF_t \cdot X{\cP}_t] \circ \geo_{\log t} \vol  \Bigg\rvert \\
    & \qquad + \|f\|_{W^7} \, \|\ell\|_{X} \, \|\cP_t\|_{X,2}t^{\nu_0- \delta_0}.
\end{split}
\]
All the functions appearing in the 5 integrals in the right-hand side above are at least H\"{o}lder continuous of exponent $\beta = \frac{1-\nu_0}{2}$. Thus, up to choosing a smaller $\delta_0$, we have that
\[
\begin{split}
    &\Bigg\lvert \int_M f\circ \tc_t \, \ell \, [\cP_t \cdot \cA_t^{+}] \circ \geo_{\log t} \volal  \Bigg\rvert \ll  \|f\|_{W^6} \, \|\ell\|_{W^6} \, \| \cP\|_{\beta} t^{-\delta_0},\\
    &\Bigg\lvert \int_M \alpha_0 \circ\tc_t \cdot \ell [\cF_t \cdot (\lambda_0 {\cP}_t-X{\cP}_t)] \circ \geo_{\log t} \volal  \Bigg\rvert  \ll \|f\|_{W^6} \, \|\ell\|_{W^6} \, (\| \cP\|_{\beta} + \|X \cP\|_{\beta}) \, t^{-\delta_0},
\end{split}
\]
by \Cref{prop:improved_FU_holder}; whence
\[
\begin{split}
    &\Bigg\lvert \int_M \alpha \ell [\cP_t \cdot \widehat{\cF}_t] \circ \geo_{\log t} \vol  \Bigg\rvert   \ll  \|\cP_t \cdot \cF_t\|_{\beta} \, \|\ell\|_{\beta} \, t^{-\delta_0},\\
    &  \Bigg\lvert \int_M [\alpha \ell - X(\alpha \ell) ] \cdot [\cP_t \cdot {\cF}_t] \circ \geo_{\log t} \vol  \Bigg\rvert \ll \|\cP_t \cdot \cF_t\|_{\beta} \, (\|\ell\|_{\beta} + \|X\ell\|_{\beta}) \,  t^{-\delta_0},\\
    &\Bigg\lvert \int_M \alpha \ell  \cdot [\cF_t \cdot X{\cP}_t] \circ \geo_{\log t} \vol  \Bigg\rvert \ll \|X\cP_t \cdot \cF_t\|_{\beta} \, \|\ell\|_{\beta} \,  t^{-\delta_0},
\end{split}
\]
by Ratner's Theorem on exponential mixing of the geodesic flow for H\"{o}lder functions \cite[Theorem 2]{Rat1}.
We then conclude
\[
    \Bigg\lvert \int_M f\circ \tc_t \, \ell \, P_t \volal  \Bigg\rvert \ll \|u\|_{W^7} \,\|f\|_{W^7} \, \|\ell\|_{W^7} t^{\nu_0 - \delta_0}.
\]

The analogous bound on the second integral in the right-hand side of \eqref{eq:Azt_second}  is obtained in the same way, with $Q_t$ instead of $P_t$, with the roles of $f$ and $\ell$ reversed, and with $-t$ instead of $t$. The proof is then complete.

\subsection{Proof of \Cref{prop:correlations_with_small_integral}}

Let us define
\[
    \Delta(x) := \tau(x,t) - t_0,
\]
and recall that, by definition of $t_0$, we have $\theta \ll \Delta(x) \ll \theta$. Also, note that $|\Delta(x) - \theta|\ll \nal \, t^{-1}R(t)$.

By \Cref{thm:Rav}, we have
\[
\begin{split}
    \Bigg\lvert \int_M \int_{t_0}^{\tau(x,t)} &w \circ \horo_r(x) \cdot v(x) \diff r \vol \Bigg\rvert = \Bigg\lvert \int_M \int_{0}^{\tau(x,t)-t_0} w \circ \horo_r(\horo_{t_0}x) \cdot v(x) \diff r \vol \Bigg\rvert \\
    \ll & \sum_{\substack{ \mu \in \Spec_{\geq 0} \\ \bullet \in \{+,-\}}} \Bigg[ \Bigg\lvert \int_M \Delta(x)^{\lambda_{\bullet}}%
    \cP_{\lambda_{\bullet}}w (\geo_{\log \Delta(x)} \circ \horo_{t_0} (x), \Delta(x))\cdot v(x)  \vol \Bigg\rvert \\
    & + \Bigg\lvert 
    \int_M \Delta(x)^{\lambda_{\bullet}}
    \cQ_{\lambda_{\bullet}}w( \geo_{\log \Delta(x)} \circ \tc_t (x),\Delta(x))\cdot v(x)  \vol \Bigg\rvert \Bigg]  + \|w\|_{W^6} \|v\|_{\infty}.
\end{split}
\]
Using first \Cref{thm:Rav}-(b) to replace $\Delta(x)$ with $\theta$ in the second variable of $\cP_{\lambda_{\bullet}}w$ and $\cQ_{\lambda_{\bullet}}w$, and subsequently by measure invariance, we rewrite
\begin{equation}\label{eq:last_eq}
\begin{split}
    \Bigg\lvert \int_M \int_{t_0}^{\tau(x,t)} &w \circ \horo_r(x) \cdot v(x) \diff r \vol \Bigg\rvert 
    \\
    \ll & \sum_{\substack{ \mu \in \Spec_{\geq 0} \\ \bullet \in \{+,-\}}} \Bigg[ \Bigg\lvert \int_M \Delta(\horo_{-t_0}x)^{\lambda_{\bullet}}%
    \cP_{\lambda_{\bullet}}w (\geo_{\log \Delta(\horo_{-t_0}x)} (x), \theta)\cdot v\circ \horo_{-t_0}(x)  \vol \Bigg\rvert \\
    & + \Bigg\lvert 
    \int_M \Delta(\tc_{-t}x)^{\lambda_{\bullet}}
    \cQ_{\lambda_{\bullet}}w( \geo_{\log \Delta(\tc_{-t}x)}  (x), \theta)\cdot \frac{v}{\alpha}\circ \tc_{-t}(x)  \vol \Bigg\rvert \Bigg]  + \|w\|_{W^6} \|v\|_{\infty}.
\end{split}
\end{equation}

By \Cref{rmk:principal_series}, if $w$ is supported only on the principal series, \eqref{eq:last_eq} can be replaced by
\begin{equation}\label{eq:last_eq_principal_series}
\begin{split}
    \Bigg\lvert \int_M &\int_{t_0}^{\tau(x,t)} w \circ \horo_r(x) \cdot v(x) \diff r \vol \Bigg\rvert 
    \\
    \ll & \sum_{\substack{ \mu \in \Spec_{\geq 0} \\ \bullet \in \{+,-\}}} \Bigg[ \Bigg\lvert \int_M \Delta(\horo_{-t_0}x)^{\lambda_{\bullet}}%
    \cP_{\lambda_{\bullet}}w (\geo_{\log \Delta(\horo_{-t_0}x)} (x), \theta)\cdot v\circ \horo_{-t_0}(x)  \vol \Bigg\rvert \\
    & + \Bigg\lvert 
    \int_M \Delta(\tc_{-t}x)^{\lambda_{\bullet}}
    \cQ_{\lambda_{\bullet}}w( \geo_{\log \Delta(\tc_{-t}x)}  (x), \theta)\cdot \frac{v}{\alpha}\circ \tc_{-t}(x)  \vol \Bigg\rvert \Bigg]  + \frac{R(t)}{t} \, \nal \,\|w\|_{W^6} \|v\|_{\infty}.
\end{split}
\end{equation}

\begin{lemma}
We have
\[
\begin{split}
    &\|\Delta(\horo_{-t_0}x)^{\lambda_{\bullet}}%
    \cP_{\lambda_{\bullet}}w ( \geo_{\log \Delta(\horo_{-t_0}x)} (x), \theta)\|_{\infty} \ll \theta^{\Re \lambda_{\bullet}} \| \cP_{\lambda_{\bullet}}w \|_{\infty}, \\
    &\|\Delta(\tc_{-t}x)^{\lambda_{\bullet}}
    \cQ_{\lambda_{\bullet}}w  (\geo_{\log \Delta(\tc_{-t}x)} (x), \theta)\|_{\infty} \ll \theta^{ \Re \lambda_{\bullet}} \| \cQ_{\lambda_{\bullet}}w \|_{\infty},
\end{split}
\]
and 
\[
\begin{split}
    &\|\Delta(\horo_{-t_0}x)^{\lambda_{\bullet}}%
    \cP_{\lambda_{\bullet}}w ( \geo_{\log \Delta(\horo_{-t_0}x)} (x), \theta)\|_{X} \ll \theta^{\Re \lambda_{\bullet} -1} t  \nal \, \| \cP_{\lambda_{\bullet}}w \|_{X}, \\
    &\|\Delta(\tc_{-t}x)^{\lambda_{\bullet}}
    \cQ_{\lambda_{\bullet}}w  (\geo_{\log \Delta(\tc_{-t}x)} (x), \theta)\|_{X} \ll \theta^{\Re \lambda_{\bullet} -1}t \nal \,   \| \cQ_{\lambda_{\bullet}}w  \|_{X}.
\end{split}
\]
\end{lemma}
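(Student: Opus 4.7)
The sup-norm estimates are essentially immediate. By the choice of $\theta \geq 2\|\tau(\cdot, t) - t\|_{\infty}$, we have $\Delta(y) = \tau(y,t) - t_0 \asymp \theta$ uniformly in $y \in M$, so $|\Delta(\horo_{-t_0}x)^{\lambda_{\bullet}}| \ll \theta^{\Re \lambda_{\bullet}}$ and $|\Delta(\tc_{-t}x)^{\lambda_{\bullet}}| \ll \theta^{\Re \lambda_{\bullet}}$. Bounding the $\cP_{\lambda_{\bullet}}w$ or $\cQ_{\lambda_{\bullet}}w$ factor at the composed point by its sup-norm yields the two $\|\cdot\|_\infty$ bounds.

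For the $X$-derivative bound on the $\cP$ expression, set $g(x) := \Delta(\horo_{-t_0}x)$ and $\phi := \cP_{\lambda_{\bullet}}w(\cdot, \theta)$, so that the function in question is $F(x) = g(x)^{\lambda_{\bullet}} \phi(\geo_{\log g(x)}(x))$. Since $X$ generates $\geo_s$, we have $D\geo_s(X) = X$ for every $s$, and the chain rule produces
\[
XF(x) = \lambda_{\bullet}\, g(x)^{\lambda_{\bullet}-1} Xg(x) \cdot \phi(\geo_{\log g(x)}(x)) + g(x)^{\lambda_{\bullet}}\Big(1 + \frac{Xg(x)}{g(x)}\Big) (X\phi)(\geo_{\log g(x)}(x)).
\]
The crucial input is the second part of \Cref{lem:Xtau_over_tau} applied at $r = -t_0$: since $t_0 \leq t$, we obtain $|Xg(x)| = |X[\tau(\horo_{-t_0}x,t)]| \ll \nal \, t$. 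Combined with $g \asymp \theta$, the first summand is of order $\theta^{\Re \lambda_{\bullet}-1} \nal\, t \, \|\cP_{\lambda_{\bullet}}w\|_\infty$, matching the target. The second summand splits as $\theta^{\Re \lambda_{\bullet}} \, \|X\cP_{\lambda_{\bullet}}w\|_\infty + \theta^{\Re \lambda_{\bullet}-1} \nal\, t \, \|X\cP_{\lambda_{\bullet}}w\|_\infty$, and since $\theta = \nal\, t^{1-\delta_0} \leq \nal\, t$, the first piece is also absorbed into the claimed bound.

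The $\cQ_{\lambda_{\bullet}}$ estimate is analogous, with $g$ replaced by $h(x) = \Delta(\tc_{-t}x) = \tau_{-}(x,t) - t_0$; in this case the first part of \Cref{lem:Xtau_over_tau} gives $X\tau_{-}/\tau_{-} = O(1)$, hence $|Xh(x)| \ll \tau_{-}(x,t) \ll t$, and the same argument carries through. The only technical point is verifying that $D\geo_s(X) = X$, so that composition with $\geo_{\log g(x)}$ introduces no hidden factor of $e^{s}$, and then invoking \Cref{lem:Xtau_over_tau} to bound $Xg$ and $Xh$; with these in hand the conclusion is straightforward product-and-chain-rule bookkeeping, and I anticipate no serious obstacle.
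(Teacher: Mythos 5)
Your overall strategy matches the paper's: the sup-norm bounds are immediate from $\Delta \asymp \theta$, and the $X$-derivative is estimated by a product/chain-rule decomposition, with \Cref{lem:Xtau_over_tau} supplying the bounds on $X\Delta$. Your treatment of the $\cP$-term is correct: applying the second part of that lemma at $r=-t_0$ gives $|Xg(x)|\ll \nal\,(t+t_0)\ll\nal\, t$, and your two summands then combine to the claimed bound (including the observation that $\theta\leq \nal\, t$ absorbs the lower-order piece).

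There is, however, a small but real gap in your $\cQ$-estimate. You cite the first part of \Cref{lem:Xtau_over_tau} only to conclude $X\tau_-/\tau_- = O(1)$, hence $|Xh(x)| \ll t$. But plugging $|Xh|\ll t$ into your own product/chain-rule decomposition yields only $\theta^{\Re\lambda_\bullet - 1}\, t\, \|\cQ_{\lambda_\bullet}w\|_X$, which is missing the factor of $\nal$ demanded by the statement. The lemma is in fact sharper than you used it: the first part gives
\[
\frac{X\tau_-(x,t)}{\tau_-(x,t)} = \left(\frac{1}{\alpha\circ\tc_{-t}(x)} - 1\right) + O\!\left(\nal\,\frac{R(t)}{t}\right),
\]
and $\left\lvert\tfrac{1}{\alpha}-1\right\rvert = \left\lvert\tfrac{1-\alpha}{\alpha}\right\rvert \ll \|\alpha-1\|_\infty \ll \nal$ by Sobolev embedding together with the a priori control on $\|\alpha^{-1}\|_{W^7}$, so in fact $|X\tau_-/\tau_-|\ll\nal$ and therefore $|Xh|\ll\nal\, \tau_-\ll\nal\, t$. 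With this sharpened input your argument delivers the stated bound, and the remaining bookkeeping is as you describe.
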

\begin{proof}
    The claim on the $\| \cdot \|_{\infty}$ norm is immediate since $|\Delta(x)|\ll \theta$. Let us estimate the $X$-derivative.
    By \Cref{lem:Xtau_over_tau}, we have
    \[
        \|X[\cP_{\lambda_{\bullet}}w ( \geo_{\log \Delta(\horo_{-t_0}x)} (x), \theta)]\|_{\infty} \ll \|X\cP_{\lambda_{\bullet}}w \|_{\infty} (1+ \|X \log[\Delta(\horo_{-t_0}x)]\|_{\infty} ) \ll \frac{t}{\theta} \nal \, \|X\cP_{\lambda_{\bullet}}w \|_{\infty}, 
    \]
    and, similarly,
    \[
        \|X[\cQ_{\lambda_{\bullet}}w  (\geo_{\log \Delta(\tc_{-t}x)} (x), \theta)]\|_{\infty} \ll \|X\cQ_{\lambda_{\bullet}}w \|_{\infty} (1+ \|X \log[\Delta(\tc_{-t}x)]\|_{\infty} ) \ll \frac{t}{\theta} \nal \, \|X\cQ_{\lambda_{\bullet}}w \|_{\infty}. 
    \]
    Using the same lemma, we can also estimate
    \[
        \|X[\Delta(\horo_{-t_0}x)^{\lambda_{\bullet}}]\|_{\infty} \ll \|\Delta(x)^{\lambda_{\bullet}}\|_{\infty} \cdot \left\| \frac{X[\tau(\horo_{-t_0}x,t)-t_0]}{\tau(\horo_{-t_0}x,t)-t_0} \right\|_{\infty} \ll \theta^{ \Re \lambda_{\bullet}-1}t \nal,
    \]
    as well as 
    \[
        \|X[\Delta(\tc_{-t}x)^{\lambda_{\bullet}}]\|_{\infty} \ll \|\Delta(x)^{\lambda_{\bullet}}\|_{\infty} \cdot \left\| \frac{X[\tau_{-}(x,t)-t_0]}{\tau_{-}(x,t)-t_0} \right\|_{\infty} \ll \theta^{\Re \lambda_{\bullet}-1}t \nal.
    \]
    Combining the estimates above proves the result.
\end{proof}

We fix $\sigma = \theta/ (\nal \, t)$ and we apply \Cref{prop:improved_FU} to each summand in \eqref{eq:last_eq}; we deduce
\[
\begin{split}
    \Bigg\lvert \int_M \int_{t_0}^{\tau(x,t)}& w \circ \horo_r(x) \cdot v(x) \diff r \vol \Bigg\rvert \ll  \|v\|_{W^6} \|w\|_{W^6} \\
    &+\|v\|_{W^6} \Big( \frac{(\theta\sigma t)^{\lambda_0}}{\sigma t} + \nal \theta^{\lambda_0 - 1} t \frac{(\sigma t)^{\lambda_0}}{t}\Big)\sum_{\substack{ \mu \in \Spec_{\geq 0} \\ \bullet \in \{+,-\}}} \big[ \| \cP_{\lambda_{\bullet}}w  \|_{X} + \| \cQ_{\lambda_{\bullet}}w  \|_{X}\big]
    \\
    \ll & 
    \|w\|_{W^6}\, \|v\|_{W^6}\, \theta^{\nu_0} (1+ \one_{\Spec_{+}}(1/4) \cdot (\log t)^2). 
\end{split}
\]
Again, if $\mu_0 > 1/4$ and if $w$ is supported only on the principal series, using \eqref{eq:last_eq_principal_series} instead of \eqref{eq:last_eq}, the same reasoning as above yields
\[
    \Bigg\lvert \int_M \int_{t_0}^{\tau(x,t)} w \circ \horo_r(x) \cdot v(x) \diff r \vol \Bigg\rvert \ll  \nal^{\frac{1}{2}}\, \|w\|_{W^6}\, \|v\|_{W^6}. 
\]
The proof is then complete.

\section{Correlations of coboundaries}

In this last section, by applying a softer method, we derive some (non-optimal) estimates on the correlations of coboundaries, proving \Cref{thm:main_spectral}. 
Let us fix $f,\ell \in W^6(M)$, with $\volal(f) = \volal(\ell) =0$.
As in the proof of \Cref{lem:key_lemma}, we have
\begin{equation}\label{eq:corr_cob}
\begin{split}
    \int_M \Ual f \circ \tc_t \cdot \ell \volal =& \int_M A_t \cdot \Ual f \circ \tc_t \cdot \frac{\alpha \ell}{A_t} \vol = \int_M \big[ X( f \circ \tc_t) - X f \circ \tc_t\big] \cdot \frac{\alpha \ell}{A_t} \vol \\
    =& - \int_M f \circ \tc_t \cdot X\Big(\frac{\alpha \ell}{A_t} \Big) \vol - \int_M Xf \circ \tc_t \cdot \frac{\alpha \ell}{A_t} \vol \\
    =&\int_M f \circ \tc_t \cdot \alpha \ell \cdot \frac{XA_t}{A_t^2} \vol - \int_M f \circ \tc_t \cdot \frac{X(\alpha \ell)}{A_t} \vol- \int_M Xf \circ \tc_t \cdot \frac{\alpha \ell}{A_t} \vol.
\end{split}
\end{equation}
We estimate the three integrals in the right-hand side separately. 
By \Cref{thm:FU_mixing} and \Cref{lem:estimate_At}, we can bound
\[
    \Bigg\lvert \int_M f \circ \tc_t \cdot \frac{X(\alpha \ell)}{A_t} \vol\Bigg\rvert \ll \|f\|_{W^6} \left\| \frac{X(\alpha \ell)}{\alpha A_t} \right\|_X \frac{R(t)}{t} \ll \|f\|_{W^6} \, \|\ell\|_{X,2} \frac{R(t)}{t^2}.
\]
Similarly, using measure-invariance and the fact that $A_t\circ \tc_{-t}(x) = -A_{-t}(x)$,
\[
\begin{split}
    \Bigg\lvert \int_M Xf \circ \tc_t \cdot \frac{\alpha \ell}{A_t} \vol\Bigg\rvert  &= \Bigg\lvert \int_M Xf \cdot \frac{\ell}{A_t}\circ \tc_{-t} \volal \Bigg\rvert  = \Bigg\lvert \int_M \ell \circ \tc_{-t} \cdot \frac{Xf}{-A_{-t}} \volal \Bigg\rvert
    \ll \|\ell\|_{W^6} \left\| \frac{Xf}{A_{-t}}\right\|_X \frac{R(t)}{t} \\
    &\ll \|\ell\|_{W^6} \, \|f\|_{X,2} \frac{R(t)}{t^2}.
\end{split}
\]
It remains thus to bound the first integral in the right-hand side of \eqref{eq:corr_cob}.
Let us call $\widetilde{\alpha} = 1 - 2 \frac{X\alpha}{\alpha} + \frac{X^2\alpha}{\alpha}$. Then, by \eqref{eq:X_deriv_u}, we have
\[
XA_t = A_t \cdot \Big( 1 - \frac{X\alpha}{\alpha} \Big) \circ \tc_t - \int_0^t \widetilde{\alpha}\circ \tc_r \diff r.
\]
We can therefore rewrite
\[
\begin{split}
    \int_M f \circ \tc_t \cdot \alpha \ell \cdot \frac{XA_t}{A_t^2} \vol &= \int_M \Big[\Big(1 - \frac{X\alpha}{\alpha}\Big)\cdot f\Big] \circ \tc_t \cdot \frac{\ell}{A_t} \volal - \int_M f \circ \tc_t \cdot \frac{\ell}{A_t^2} \Big( \int_0^t \widetilde{\alpha}\circ \tc_r \diff r\Big) \volal \\
    &= \int_M \ell \circ \tc_{-t} \cdot \frac{(1 - X\alpha/ \alpha ) f}{-A_{-t}} \volal - \int_M f \circ \tc_t \cdot \frac{\ell}{A_t^2} \Big( \int_0^t \widetilde{\alpha}\circ \tc_r \diff r\Big) \volal. 
\end{split}
\]
By \Cref{lem:estimate_At}, we have
\[
    \left\|\frac{1}{A_t^2} \Big( \int_0^t \widetilde{\alpha}\circ \tc_r \diff r\Big)  \right\|_X \ll \frac{1}{t},
\]
thus, using \Cref{thm:FU_mixing} exactly as above, we deduce
\[
    \Bigg\lvert \int_M f \circ \tc_t \cdot \alpha \ell \cdot \frac{XA_t}{A_t^2} \vol \Bigg\rvert\ll \|f\|_{W^6} \, \|\ell\|_{W^6} \frac{R(t)}{t^2}.
\]
Substituting back into \eqref{eq:corr_cob}, we conclude
\begin{equation}\label{eq:final_one}
    \Bigg\lvert\int_M \Ual f \circ \tc_t \cdot \ell \volal\Bigg\rvert\ll \|f\|_{W^6} \, \|\ell\|_{W^6} \frac{R(t)}{t^2}.
\end{equation}
We complete the proof of \Cref{thm:main_spectral} by showing that the estimate above implies the result on the spectral measures. Let us fix $f \in W^6(M)$. For any $\xi \neq 0$ and $\varepsilon \in (0, |\xi|/2)$, by \cite[Equations (20)--(23)]{FU}, we have
\[
\begin{split}
    |\sigma_f(\xi - \varepsilon, \xi + \varepsilon)| &\ll \frac{\varepsilon^{2}}{\xi^2} \left\| \int_0^{\varepsilon^{-1}} e^{i\xi t} \Ual f \circ \tc_t \diff t \right\|_{2}^2 \ll \frac{\varepsilon^{2}}{\xi^2} \int_0^{\varepsilon^{-1}}\int_0^{\varepsilon^{-1}} \Bigg\lvert \int_M \Ual f \circ \tc_r \cdot \Ual f \circ \tc_t \volal\Bigg\rvert \diff r \diff t \\
    &\ll \frac{\varepsilon^{2}}{\xi^2} \int_0^{\varepsilon^{-1}} (\varepsilon^{-1} - t) \Bigg\lvert \int_M \Ual f \circ \tc_t \cdot \Ual f \volal\Bigg\rvert \diff t \\
    &\ll \|\Ual f\|_{\infty}^2 \frac{\varepsilon}{\xi^2}  + \frac{\varepsilon^{2}}{\xi^2} \int_2^{\varepsilon^{-1}} (\varepsilon^{-1} - t) \Bigg\lvert \int_M \Ual f \circ \tc_t \cdot \Ual f \volal\Bigg\rvert \diff t.
\end{split}
\]
\Cref{eq:final_one} yields 
\[
\int_2^{\varepsilon^{-1}} (\varepsilon^{-1} - t) \Bigg\lvert \int_M \Ual f \circ \tc_t \cdot \Ual f \volal\Bigg\rvert \diff t \ll \| f\|_{W^6}^2 \int_2^{\varepsilon^{-1}} (\varepsilon^{-1} - t)  \frac{R(t)}{t^2} \diff t \ll \| f\|_{W^6}^2\varepsilon^{-1},
\]
hence the proof is complete.

\end{document}